\theoremstyle{plain}
\newtheorem{theorem}{Theorem}
\newtheorem{corollary}{Corollary}
\newtheorem{lemma}{Lemma}
\newtheorem{remark}{Remark}
\newcommand{\N}{\mathbb{N}}
\newcommand{\R}{\mathbb{R}}
\newcommand{\metric}{\langle \, , \, \rangle}
\newcommand{\PP}{\mathcal{P}}%\newcommand{\Ite}[1]{\noindent (#1)}
\newcommand{\eps}{\varepsilon}
\newcommand{\Sph}{\mathbb{S}}
\newcommand{\di}{\mathrm{d}}
\newcommand{\HH}{\mathbb{H}}
\newcommand{\Ricc}{\mathrm{Ric}}
\newcommand{\vol}{\mathrm{vol}}
\newcommand{\loc}{\mathrm{loc}}
\newcommand{\RR}{\mathbb{R}}
\DeclareMathOperator{\dist}{dist}
\renewcommand{\div}{\mathrm{div}}
\begin{document}

\title[Maximum principles for weakly $1$-coercive operators]{Maximum principles for weakly $1$-coercive operators with applications to capillary and prescribed mean curvature graphs}

\author{Luis J. Al\'{\i}as}
\address{Departamento de Matem\'{a}ticas, Universidad de Murcia, Campus de Espinardo, E-30100 Espinardo, Murcia, Spain}
\email{ljalias@um.es}

\author{Giulio Colombo}
\address{Dipartimento di Matematica e Applicazioni ``R. Caccioppoli'', Universit\`{a} degli Studi di Napoli ``Federico II'', Via Vicinale Cupa Cintia 26, I-80126 Naples, Italy}
\email{giulio.colombo@unina.it}

\author{Marco Rigoli}
\address{Dipartimento di Matematica ``F. Enriques'', Universit\`a degli Studi di Milano, Via Saldini 50, I-20133 Milano, Italy}
\email{marco.rigoli@unimi.it}

\maketitle

\begin{abstract}
	In this paper we establish maximum principles for weakly 1-coercive operators $L$ on complete, non-compact Riemannian manifolds $M$. In particular, we search for conditions under which one can guarantee that solutions $u$ of differential equations of the form $L(u)\geq f(u)$ satisfy $f(u)\leq 0$ on $M$. The case of weakly $p$-coercive operators with $p>1$, including the $p$-Laplacian and in particular the Laplace-Beltrami operator for $p=2$, has been considered in our recent paper \cite{acr24}. As a consequence of the main results we infer comparison principles for that kind of operators. Furthermore we apply them to geometric situations dealing with the mean curvature operator, which is a typical weakly 1-coercive operator. We first consider the case of $\mathcal C^1$ operators $L$ acting on functions $u$ of class $\mathcal C^2$ and, in the last section of the paper, we show how our results can be extended to the case of less regular operators $L$ acting on functions $u$ which are just continuous and locally $W^{1,1}$ regular.
\end{abstract}

\bigskip

\begin{center}
	\textit{Dedicated to Professor Marcos Dajczer on his 75$^{\text{th}}$ birthday}
\end{center}

\section{Introduction}
In what follows, $(M,\metric)$ will always denote a connected Riemannian manifold of dimension $m\geq 2$. In its simplest form, the validity of the weak maximum principle (WMP) for the Laplace-Beltrami operator $\Delta$ on $M$ states that if $u$ is (to simplify the matter) a $\mathcal C^2$ function on $M$ such that
\begin{equation}
	u^* = \sup_M u < +\infty
\end{equation} 
then, there exists a sequence $\{x_k\}\subset M$ with the following properties
\begin{equation}
	u(x_k) > u^* - \frac{1}{k} \quad \text{ and } \quad \Delta u(x_k) < \frac{1}{k}
\end{equation}
for each $k\in\N$. This is equivalent to the stochastic completeness of $M$, see \cite{prs03}, and the latter can be analytically expressed in the following terms: for each $\lambda>0$, the only solution $u\in\mathcal C^2(M)$ of
\begin{equation}
\label{I1.3}
\Delta u\geq \lambda u \quad \text{ on } M
\end{equation}
satisfying
\begin{equation}
0\leq u\leq u^*<+\infty
\end{equation}
is the trivial solution $u\equiv 0$, see \cite{g99}. In the above formulation, allowing a weak interpretation of (\ref{I1.3}), we can avoid the request $u\geq 0$; indeed, for $u$ a solution of (\ref{I1.3}), 
\[
u_{+}=\max\{u,0\}
\]
is still a (weak) solution of  (\ref{I1.3})  on $M$. Note that, since the WMP for $\Delta$ on $(M,\metric)$ is equivalent to stochastic completeness, geodesic completeness is not necessary for its validity, thus the WMP for $\Delta$ holds on $\R^2\setminus\{0\}$ with the Euclidean (uncomplete) metric.

Replacing the right hand side of (\ref{I1.3}) with $f(u)$ for some continuous $f:\R\rightarrow\R$, in case $f$ satisfies
\begin{equation}
\label{I1.5}
\liminf_{t\to+\infty}\frac{f(t)}{t^\sigma}>0
\end{equation}
for some $\sigma>1$, it can be seen that assumptions which guarantee the validity of the WMP for $\Delta$ also guarantee that a solution $u$ of 
\begin{equation}
\label{I1.6}
\Delta u\geq f(u)
\end{equation}
first satisfies $u^*<+\infty$ and then application of the WMP implies $f(u^*)\leq 0$; see Theorems 4.1 and 4.2 of \cite{amr16} for very general operators others than $\Delta$ and further applications. Please note that in condition (4.29) in \cite{amr16}, due to a typo, a ``log''  is missing just before the integral over $B_r$. We emphasize here that assumption (\ref{I1.5}) plays an essential role in establishing $u^\ast<+\infty$. 

Once we assume geodesic completeness of the metric $\metric$, there are at least three interesting types of assumptions that guarantee the validity of the WMP for the Laplace-Beltrami operator (and, in fact, for many other operators). The first one is based on Khasminskii's test but, in what follows, we will not deal with it; we refer the interested reader to \cite{bmpr21} and \cite{amr16}.The second condition reads
\begin{equation}
\label{I1.7}
\liminf_{R\to+\infty}\frac{1}{R^2}\log\vol B_R<+\infty
\end{equation}
and the third one involves the positive part $u_{+}=\max\{u,0\}$ of $u$ via the request
\begin{equation}
\label{I1.8}
\liminf_{R\to+\infty}\frac{1}{R^2}\log\int_{B_R}u_+^q<+\infty
\end{equation}
for some $q>0$.

The aim of this paper is to present conditions similar to (\ref{I1.7}) and (\ref{I1.8}) to guarantee that solutions $u$ of differential inequalities of the form
\[
	Lu \geq f(u)
\]
satisfy $f(u)\leq 0$ on $M$, without requiring the validity of (\ref{I1.5}), when $L$ is a weakly 1-coercive operator as defined in section \ref{sec2} below (for instance, the mean curvature operator falls in this class). The case of weakly $p$-coercive operators with $p>1$, including the $p$-Laplacian and in particular the Laplace-Beltrami operator for $p=2$, has been considered in the recent paper \cite{acr24}.

In what follows we mainly present two kind of results, see for instance Theorem \ref{A_1w_base} and Theorem \ref{Xlessreg}. In the first case, the proof has a geometric flavour in the vein of the paper \cite{acdn21} and requires more regularity; in the second case we refine our approach to make it less demanding from the point of view of regularity. As a consequence of the main results we infer comparison principles. Furthermore we apply them to geometric situations dealing with the mean curvature operator, which is a typical weakly 1-coercive operator. Comparison principles and related uniqueness results for the mean curvature operator have been considered by many authors especially in the Euclidean setting. Just to mention some references we refer the reader to Concus and Finn \cite{cf74a,cf74b}, Finn \cite{f53,f74}, Finn and Hwang \cite{fh89} and Hwang \cite{h88} for results on the Euclidean space $\R^m$, while for results on general Riemannian manifolds we recall Pigola, Rigoli and Setti \cite{prs02}.

\section{Weakly $1$-coercive operators}
\label{sec2}
	
Let $(M,\metric)$ be a complete, non-compact, Riemannian manifold of dimension $m\geq 2$. Given a non-empty open set $\Omega\subseteq M$, we denote by $TM|_\Omega = \cup_{x\in\Omega} T_x M$ the restriction of the tangent bundle of $M$ to $\Omega$ and by $W^{1,1}_\loc(\Omega)$ the space of measurable  functions $u : \Omega \to \RR$ such that for any relatively compact open subset $\Omega_0\subseteq\Omega$ the restriction $u|_{\Omega_0}$ belongs to the Sobolev space $W^{1,1}(\Omega_0)$, characterized as the completion of $C^1(\Omega_0)$ with respect to the Sobolev norm
\[
	\|u\|_{W^{1,1}(\Omega_0)} = \int_{\Omega_0} (|u| + |\nabla u|) \qquad \forall \, u \in C^1(\Omega_0) \, .
\]
In this paper, we consider quasilinear differential operators $L$ in divergence form weakly defined on functions $u\in W^{1,1}_\loc(\Omega)$ by
\begin{equation} \label{L_def}
	Lu = \div (A(x,u,\nabla u))
\end{equation}
or, more generally, by
\begin{equation} \label{Lw_def}
	Lu = w^{-1} \div (w \, A(x,u,\nabla u)) \, .
\end{equation}
Here $A : \RR\times TM|_\Omega \to TM|_\Omega$ is a \emph{Carath\'eodory-type (generalized) bundle map}, that is, a function such that
\[
	A(x,s,\xi) \in T_x M \qquad \forall \, x\in\Omega, \, s\in\RR, \, \xi\in T_x M
\]
and whose representation $\tilde A : \psi(U) \times \RR \times \RR^m \to \RR^m$ in any chart $\psi : U \to \RR^m$ (with $U\subseteq \Omega$) satisfies the Carath\'eodory conditions, that is,
\begin{itemize}
	\item $\tilde A(y,\,\cdot\,,\,\cdot\,) : \RR\times\RR^m \to \RR^m$ is continuous for a.e.~$y\in\psi(U)$, and
	\item $\tilde A(\,\cdot\,,s,v) : \psi(U) \to \RR^m$ is measurable for every $(s,v)\in\RR\times\RR^m$.
\end{itemize}
Throughout the paper, we assume $A$ to be \emph{weakly-1-coercive}, that is, we require that there exists a positive constant $k>0$ such that
\begin{align}
	\label{wc1}
	\langle A(x,s,\xi),\xi \rangle \geq 0 & \qquad \forall \, x \in\Omega, \, s \in \RR, \, \xi \in T_x M \\
	\label{wc2}
	A(x,s,0) = 0 & \qquad \forall \, x \in\Omega, \, s \in \RR \\
	\label{wc3}
	|A(x,s,\xi)| \leq k & \qquad \forall \, x \in\Omega, \, s \in \RR, \, \xi \in T_x M \, .
\end{align}
A Carath\'eodory-type generalized bundle map $A$ satisfiying only \eqref{wc1} and \eqref{wc2} is usually said to be \emph{weakly elliptic}. We adopt the said terminology for bundle maps $A$ satisfying all three conditions above because we regard \eqref{wc3} as the limit case for $p\to 1^+$ of the so-called weakly-$p$-coercivity (with $1 < p < +\infty$) condition for weakly elliptic bundle maps,
\[
	|A(x,s,\xi)| \leq k \langle A(x,s,\xi),\xi \rangle^{\frac{p-1}{p}} \, .
\]

If $A$ is weakly-$1$-coercive, then the differential operator $L$ associated to $A$ weakly defined by \eqref{L_def} is also said to be weakly-$1$-coercive, and the constant $k>0$ appearing in \eqref{wc3} is said to be a \emph{coercivity constant} for $L$. To be precise, $L$ is defined as the map
\[
	L : W^{1,1}_\loc(\Omega) \to W^{1,1}_c(\Omega)^\ast
\]
sending each $u\in W^{1,1}_\loc(\Omega)$ to the continuous linear functional on $W^{1,1}_c(\Omega)$ given by
\[
	W^{1,1}_c(\Omega) \ni \varphi \mapsto - \int_{\Omega} \langle A(x,u,\nabla u),\nabla\varphi\rangle \, .
\]
This is well defined because the structural conditions on $A$ ensure that for every $u\in W^{1,1}_\loc(\Omega)$ the function $\Omega \ni x \mapsto A(x,u(x),\nabla u(x))$ is a bounded measurable vector field on $\Omega$. For $u\in W^{1,1}_\loc(\Omega)$ and $f\in L^\infty_\loc(\Omega)$, we write
\[
	Lu = f \qquad \text{(weakly) in } \, \Omega
\]
if the functional $Lu$ can be represented as integration against $f$, that is, if
\begin{equation} \label{Lu_weakdef}
	- \int_\Omega \langle A(x,u,\nabla u),\nabla\varphi\rangle = \int_\Omega f\varphi \qquad \forall \, \varphi \in W^{1,1}_c(\Omega) \, .
\end{equation}
More generally, in case $f : \Omega \to \RR$ is a measurable function satisfying $f_+\in L^\infty_\loc(\Omega)$, then we write
\[
	Lu \geq f \qquad \text{(weakly) in } \, \Omega
\]
for a given function $u\in W^{1,1}_\loc(\Omega)$ if
\[
	-\int_{\Omega} \langle A(x,u,\nabla u),\nabla\varphi \rangle \geq \int_\Omega f \varphi \qquad \forall \, \varphi \in W^{1,1}_c(\Omega) , \, \varphi \geq 0 \, .
\]
The meaning of $Lu \leq f$, with $f : \Omega \to \RR$ measurable and such that $f_-\in L^\infty_\loc(\Omega)$, is defined analogously.

Of course, in case $A : \RR\times TM|_\Omega \to TM|_\Omega$ is $\mathcal C^1$ regular and $u\in\mathcal C^2(\Omega)$, then the weak definition of $Lu$ given above agrees with the strong one, that is, $A(x,u,\nabla u)$ is a $\mathcal C^1$ vector field and its divergence $f = \div(A(x,u,\nabla u))$ is pointwise well-defined and continuous in $\Omega$, and satisfies \eqref{Lu_weakdef}.

If together with a weakly-$1$-coercive Carath\'eodory bundle map $A$ we also consider a function $w\in L^\infty_\loc(\Omega)$ satisfying $w > 0$ a.e.~in $\Omega$, then the ``\emph{weighted}'' operator $L$ weakly defined by \eqref{Lw_def}, that is, implicitely determined by
\[
	w Lu = \div(w \, A(x,u,\nabla u)) \qquad \forall \, u \in W^{1,1}_\loc(\Omega) \, ,
\] will still be called a weakly-$1$-coercive operator (with coercivity constant $k>0$). Similarly to what we did above, if $u\in W^{1,1}_\loc(\Omega)$ and $f\in L^\infty_\loc(\Omega)$ then we write
\[
	Lu = f \qquad \text{(weakly) in } \, \Omega
\]
for $L$ as above if
\begin{equation} \label{Lwu_weakdef}
	- \int_\Omega w \langle A(x,u,\nabla u),\nabla\varphi\rangle = \int_\Omega wf\varphi \qquad \forall \, \varphi \in W^{1,1}_c(\Omega) \, ,
\end{equation}
and in case $f : \Omega \to \RR$ is measurable and such that $f_+\in L^\infty_\loc(\Omega)$ then we also write
\[
	Lu \geq f \qquad \text{(weakly) in } \, \Omega
\]
if
\[
	-\int_{\Omega} w \langle A(x,u,\nabla u),\nabla\varphi \rangle \geq \int_\Omega w f \varphi \qquad \forall \, \varphi \in W^{1,1}_c(\Omega) , \, \varphi \geq 0 \, ,
\]
and a similar definition can be given for the reversed inequality $Lu \leq f$ in case $f_-\in L^\infty_\loc(\Omega)$. Again, in case $A : \RR\times TM|_\Omega \to TM|_\Omega$ is $\mathcal C^1$ regular and $u\in\mathcal C^2(\Omega)$, if we also have $w\in\mathcal C^1(\Omega)$ then the weak definition of $Lu$ coincides with the strong one, meaning that the ``weighted'' divergence
\[
	f = w^{-1} \div(w \, A(x,u,\nabla u)) \equiv \div(A(x,u,\nabla u)) + \frac{\langle\nabla w,A(x,u,\nabla u)\rangle}{w}
\]
is a pointwise well-defined and continuous function in $\Omega$ satisfying \eqref{Lwu_weakdef}.

\section{Maximum principles for weakly-$1$-coercive operators}

Let $(M,\metric)$ be a complete Riemannian manifold of dimension $m\geq 2$. Throughout this section $L$ will always be a (possibly weighted) weakly-$1$-coercive operator associated to a $\mathcal C^1$ bundle map $A$ and acting on functions $u$ of class $\mathcal C^2$. In particular, $A(x,u,\nabla u)$ will always be a $\mathcal C^1$ vector field and all differential equalities and inequalities can be interpreted in a strong, pointwise sense.

In order to state our first result, we have to introduce two definitions. Let $o\in M$ be a fixed point. We denote with $r$ the Riemannian distance function from $o$ in $M$, that is, we write $r(x) = \dist(o,x)$ for $x\in M$, and we denote with $B_R = B_R(o)$ the geodesic ball of radius $R>0$ centered at $o$. Let $\mu\in [0,1]$ be a given a parameter. First, we define
\begin{equation} \label{cmu_M}
	c_\mu(M,o) = \begin{cases}
		\displaystyle\liminf_{R\to+\infty} \frac{(1-\mu) \log\vol(B_R)}{R^{1-\mu}} & \quad \text{if } \, 0 \leq \mu < 1 \\[0.4cm]
		\displaystyle\liminf_{R\to+\infty} \frac{\log\vol(B_R)}{\log R} & \quad \text{if } \, \mu = 1 \, .
	\end{cases}
\end{equation}
As a consequence of the triangle inequality, the value of $c_\mu(M,o)$ is actually independent on the choice of the base point $o\in M$, so we will simply write $c_\mu(M)$ for $c_\mu(M,o)$. Moreover, we always have
\[
	c_\mu(M) \geq 0
\]
because, $\log\vol(B_R)$ being a nondecreasing function of $R>0$, we have
\[
c_\mu(M) \geq \begin{cases}
	\displaystyle\liminf_{R\to+\infty} \frac{(1-\mu) \log\vol(B_1)}{R^{1-\mu}} = 0 & \quad \text{if } \, 0 \leq \mu < 1 \\[0.4cm]
	\displaystyle\liminf_{R\to+\infty} \frac{\log\vol(B_1)}{\log R} = 0 & \quad \text{if } \, \mu = 1 \, .
\end{cases}
\]
We will comment further below on exact values and lower bounds on $c_\mu(M)$ for some particular classes of manifolds. Secondly, we will say that a positive continuous function $b : M \to (0,+\infty)$ \emph{satisfies condition} \eqref{condB} for a couple of given parameters $\mu\in[0,1]$ and $\beta>0$ when the following circumstance is verified:
\begin{equation} \label{condB} \tag{B}
	\left\{
		\begin{array}{r@{\;}c@{\;}l@{\qquad}l}
			\displaystyle\liminf_{r(x)\to+\infty} r(x)^\mu b(x) & \geq & \beta & \text{in case } \, 0 < \mu \leq 1 \\
			b(x) & \geq & \beta \quad \forall \, x \in M & \text{in case } \, \mu = 0 \, .
		\end{array}
	\right.
\end{equation}
Again, for a given continuous positive function $b(x)$ on $M$ the validity of condition (B) for a given pair of parameters $\mu\in[0,1]$ and $\beta>0$ does not depend on the choice of the base point $o\in M$ with respect to which the distance $r(x)$ is measured. In case $\mu\in(0,1]$, condition (B) amounts to saying that for each $\beta^\ast < \beta$ there exists $R_0 = R_0(\beta^\ast) > 0$ such that
\[
	b(x) \geq \beta^\ast r(x)^{-\mu} \qquad \text{for } \, r(x) > R_0 \, ,
\]
that is, $b(x)$ does not decay faster than $\beta r(x)^{-\mu}$ as $x$ goes to infinity in $M$ along any diverging sequence. In particular, in case $\mu\in(0,1]$ condition \eqref{condB} is satisfied, for a given $\beta>0$, by any continuous $b : M \to (0,+\infty)$ such that
\[
	b(x) \geq \beta r(x)^{-\mu} \qquad \text{for } \, r(x) \gg 1 \, .
\]

Our first result in this setting is the following
\begin{theorem} \label{A_1w_base}
	Let $(M,\metric)$ be a complete manifold, $A : \RR \times TM \to TM$ a $\mathcal{C}^1$ weakly-$1$-coercive generalized bundle map with coercivity constant $k>0$ and let $L$ be the corresponding weakly-$1$-coercive operator given by \eqref{L_def}.
	
	Let $f\in\mathcal C(\RR)$ be non-decreasing and $b\in\mathcal C(M)$ be positive and satisfy condition \eqref{condB} for parameters $\mu\in[0,1]$ and $\beta>0$. If $u\in\mathcal C^2(M)$ satisfies
	\begin{equation}
		\label{L1_Om_M}
		Lu \geq b(x)f(u) \qquad \text{on } \, M, 
	\end{equation}
	then
	\begin{equation}\label{f_cmu_M}
	f(u) \leq \frac{k}{\beta} c_\mu(M)
	\qquad \text{on } \, M .
	\end{equation}
	In particular, if $c_\mu(M) = 0$ then
	\[
		f(u) \leq 0 \qquad \text{on } \, M .
	\]
\end{theorem}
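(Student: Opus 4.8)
The plan is to argue by contradiction, extracting from \eqref{L1_Om_M} a Caccioppoli-type integral inequality on geodesic annuli and then iterating it in the spirit of a discrete Gronwall lemma, after reparametrising the radial variable so that the decay $b(x)\sim\beta r(x)^{-\mu}$ allowed by \eqref{condB} is absorbed into the annuli. We may assume $c_\mu(M)<+\infty$. Suppose the conclusion fails, so that $f(u(\bar x))>\tfrac{k}{\beta}c_\mu(M)$ for some $\bar x\in M$. Using that $f$ is continuous I would first pick $\bar c<u(\bar x)$ with $\sigma:=f(\bar c)>\tfrac{k}{\beta}c_\mu(M)$, and then $\beta^\ast\in(0,\beta)$ so close to $\beta$ that $\lambda:=\sigma\beta^\ast/k>c_\mu(M)$; by \eqref{condB} fix $R_0\ge 1$ with $b(x)\ge\beta^\ast r(x)^{-\mu}$ for $r(x)\ge R_0$. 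Set $\Omega:=\{u>\bar c\}$, an open set containing $\bar x$ on which $f(u)\ge\sigma>0$. An important preliminary step is to show that $\Omega$ is not relatively compact: were it so, testing the pointwise inequality $\div(A(x,u,\nabla u))\ge b(x)f(u)$ against the nonnegative, compactly supported Lipschitz function $\varphi_\varepsilon:=\min\{\varepsilon^{-1}(u-\bar c)_+,1\}$ — whose gradient $\varepsilon^{-1}\mathbf 1_{\{\bar c<u<\bar c+\varepsilon\}}\nabla u$ lets \eqref{wc1} make the resulting left-hand side $\le 0$ — would give $\sigma\int_\Omega b\,\varphi_\varepsilon\le 0$, whence $\sigma\int_\Omega b\le 0$ as $\varepsilon\to 0^+$, which is absurd. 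Hence $\Omega$ is unbounded.

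Next I would introduce $h(t):=\int_1^t s^{-\mu}\,\di s$ for $t\ge 1$, so that $h$ is increasing and $1$-Lipschitz, $h(1)=0$, $h(t)\to+\infty$, with $h(t)=\tfrac{t^{1-\mu}-1}{1-\mu}$ if $\mu<1$ and $h(t)=\log t$ if $\mu=1$; in particular $\liminf_{R\to+\infty}\tfrac{\log\vol B_R}{h(R)}=c_\mu(M)$. For $R_0\le R<S$ let $\rho=\rho_{R,S}$ be the Lipschitz cut-off equal to $1$ on $B_R$, vanishing off $B_S$, and equal to $\tfrac{h(S)-h(r(x))}{h(S)-h(R)}$ in between, so that $|\nabla\rho|=\tfrac{r^{-\mu}}{h(S)-h(R)}$ a.e.\ on $B_S\setminus B_R$. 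Fixing $T>0$ and putting $v:=\min\{(u-\bar c)_+,T\}$, I would test \eqref{L1_Om_M} against the nonnegative, compactly supported Lipschitz function $\varphi:=\rho\,v$. Integrating by parts, $\nabla\varphi=v\nabla\rho+\rho\,\mathbf 1_{\{\bar c<u<\bar c+T\}}\nabla u$: the second contribution has a favourable sign by \eqref{wc1} and is simply discarded, while $|\langle A,v\nabla\rho\rangle|\le k\,v|\nabla\rho|$ by \eqref{wc3}; on the left, $f(u)\ge\sigma$ where $v>0$, $\rho=1$ on $B_R$, and $b\ge\beta^\ast r^{-\mu}$ for $r\ge R_0$. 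Writing $Y(R):=\int_{\Omega\cap B_R}v\,\max(r,1)^{-\mu}$ (a finite, nondecreasing quantity), this yields
\[
\sigma\beta^\ast\bigl(Y(R)-Y(R_0)\bigr)\ \le\ \frac{k}{h(S)-h(R)}\,\bigl(Y(S)-Y(R)\bigr),\qquad R_0\le R<S.
\]

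Finally, set $Z(R):=Y(R)-Y(R_0)$; it is nondecreasing, vanishes at $R_0$, and is strictly positive for $R$ large since $\Omega$ is unbounded, so fix $R_1>R_0$ with $Z(R_1)>0$. The displayed inequality rearranges to $Z(S)\ge\bigl(1+\lambda(h(S)-h(R))\bigr)Z(R)$ for $R_0\le R<S$, and a standard discrete Gronwall argument — chopping $[h(R_1),h(R)]$ into many short intervals and letting their number tend to infinity — gives $Z(R)\ge Z(R_1)\,e^{\lambda(h(R)-h(R_1))}$ for all large $R$, hence $\liminf_{R\to+\infty}\tfrac{\log Z(R)}{h(R)}\ge\lambda$. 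On the other hand $Z(R)\le Y(R)\le T\vol B_R$, so $\liminf_{R\to+\infty}\tfrac{\log Z(R)}{h(R)}\le\liminf_{R\to+\infty}\tfrac{\log\vol B_R}{h(R)}=c_\mu(M)$. Thus $\lambda\le c_\mu(M)$, contradicting $\lambda>c_\mu(M)$. This proves $f(u)\le\tfrac{k}{\beta}c_\mu(M)$ on $M$, and the last assertion is the special case $c_\mu(M)=0$.

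The step I expect to be the crux is the integral estimate, where several choices must be matched at once: one truncates at height $T$ (working with $v$ rather than $u$) so that the iterated quantity $Y$ stays comparable to $\vol B_R$ even though $u$ and $b$ may be unbounded; one uses \eqref{condB} to trade the coefficient $b$ on the good side of the inequality for precisely the factor $r^{-\mu}$; and one reparametrises the radius by $h$ so that this factor matches the decay of $|\nabla\rho|$ on the annulus. It is exactly the compatibility of these three matchings that produces the sharp constant $k/\beta$ and the threshold $c_\mu(M)$ of \eqref{cmu_M}; with a cruder cut-off or a non-sharp comparison one would only recover $f(u)\le 0$ under the stronger hypothesis $c_\mu(M)=0$, or the stated conclusion with a worse constant.
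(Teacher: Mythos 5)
Your proof is correct, but it follows a genuinely different route from the paper's. The paper argues dynamically: it takes the bounded $\mathcal C^1$ vector field $X=A(x,u,\nabla u)$, uses $\langle\nabla u,X\rangle\geq0$ (i.e.\ \eqref{wc1}) to show the flow $\Psi_t$ preserves the superlevel set $\Omega_\eps=\{f(u)>c^\ast+\eps\}$, and converts $\div X\geq b\,f(u)$ into the ODE $g'(t)\geq(c^\ast+\eps)\beta^\ast(R_1+kt)^{-\mu}g(t)$ for the volume $g(t)=\vol(\Psi_t(B_\delta(p)))$; since $|X|\leq k$ forces $\Psi_t(B_\delta(p))\subseteq B_{R_1+kt}$, integrating contradicts the definition \eqref{cmu_M} of $c_\mu(M)$. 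You instead run a purely integral (Caccioppoli-type) argument: testing against $\rho\,v$ with the truncation $v=\min\{(u-\bar c)_+,T\}$ and the $h$-reparametrised annular cutoff, discarding the good term via \eqref{wc1} and bounding the bad one via \eqref{wc3}, then iterating the two-radius inequality $Z(S)\geq(1+\lambda(h(S)-h(R)))Z(R)$ by a discrete Gronwall step to get $Z(R)\gtrsim e^{\lambda h(R)}$, against $Z(R)\leq T\vol B_R$ — and your preliminary step showing $\{u>\bar c\}$ cannot be relatively compact is exactly what is needed to seed the iteration with $Z(R_1)>0$. The bookkeeping checks out (in particular $R_0\geq1$ makes $\max(r,1)^{-\mu}=r^{-\mu}$ on the relevant annuli, and $\liminf\log\vol B_R/h(R)=c_\mu(M)$), and you recover the sharp constant $k/\beta$. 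What each approach buys: the flow proof is short and geometric but genuinely uses $A\in\mathcal C^1$, $u\in\mathcal C^2$ and completeness to have a globally defined flow, whereas your argument needs only the weak formulation and Lipschitz test functions, so it already covers (and is close in spirit to) the low-regularity setting of Section 6 — compare the proof of Theorem \ref{Xlessreg}, which likewise works with superlevel sets and cutoffs, but differentiates the weighted volume function $G(t)$ a.e.\ instead of iterating a two-radius inequality, and avoids your truncation at height $T$ by replacing $u$ with the bounded function $\lambda_\eps(\psi-\gamma)$. One could also adapt your scheme verbatim to the localized statement of Theorem \ref{A_1w_base_bis} by intersecting with $\Omega$, just as the paper does for its flow argument.
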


\begin{remark}\label{remark_cmu}
	\emph{Regarding the values of $c_\mu(M)$ a few remarks are in order, in particular to highlight some relevant cases in which $c_\mu(M) = 0$. If $M=\RR^m$ is the Euclidean space then $\vol(B_R)=\omega_mR^m$, where $\omega_m$ denotes the volume of the Euclidean unit ball of the same dimension. As a consequence
	\begin{equation} \label{cmuRm}
	c_\mu(\RR^m)=\begin{cases}
	0 & \quad \text{if } \, 0 \leq \mu < 1 \\
	m & \quad \text{if } \, \mu = 1 \, .
	\end{cases}
	\end{equation}
	More generally, if $(M,\metric)$ has at most polynomial volume growth, that is,
	\[
	\limsup_{R\to+\infty} \frac{\vol(B_R)}{R^N} < +\infty
	\]
	for some real number $N>0$, then
	\[
	c_\mu(M)\begin{cases}
	= 0 & \quad \text{if } \, 0 \leq \mu < 1 \\
	\leq N & \quad \text{if } \, \mu = 1 \, .
	\end{cases}
	\]
	Even more, if $M$ has finite volume, then $\vol(B_R)\leq\vol(M)<+\infty$ for every $R>0$ and $c_\mu(M)= 0$ for every $\mu\in[0,1]$. If $M=\mathbb{H}^m$ is the hyperbolic space of constant sectional curvature $-1$ then it is known that $\vol(B_R)\sim c_me^{(m-1)R}$ as $R\rightarrow+\infty$, where $c_m=\omega_{m-1}2^{1-m}$, and then
	\begin{equation} \label{cmuHm}
	c_\mu(\mathbb{H}^m)=\begin{cases}
	m-1 & \quad \text{if } \, \mu=0 \\
	+\infty & \quad \text{if } \, 0<\mu\leq 1 \, .
	\end{cases}
	\end{equation}
	More generally, by Bishop-Gromov volume comparison theorem, if $(M,\metric)$ is complete and
	\begin{equation} \label{RicB}
	\Ricc(\nabla r,\nabla r) \geq - (m-1) \frac{B^2}{(1+r^2)^{\alpha/2}} \qquad \text{with } \, 0 < \alpha \leq 2 \, , B \geq 0
	\end{equation}
	then we can estimate $c_\mu(M)$ from above in cases where $0\leq\mu\leq\frac{\alpha}{2}$. Using Theorem 3.11 in \cite{bmpr21} (and also taking into account the subsequent discussion there) we have
	\begin{itemize}
		\item if $0 < \alpha < 2$ then 
		\[
		c_\mu(M) \begin{cases}
		= 0 & \quad \text{if } \, 0 \leq \mu < \dfrac{\alpha}{2} \\[0.2cm]
		\leq (m-1)B & \quad \text{if } \, \mu = \dfrac{\alpha}{2} \, .
		\end{cases}
		\]
		\item if $\alpha=2$ then
		\[
		c_\mu(M) \begin{cases}
		= 0 & \quad \text{if } \, 0 \leq \mu < 1 \\
		\leq (m-1)\dfrac{1+\sqrt{1+4B^2}}{2} + 1 & \quad \text{if } \, \mu = 1 \, .
		\end{cases}
		\]
	\end{itemize}
	Also, if \eqref{RicB} holds with $\alpha=0$ and $B\geq0$ we still can estimate $c_0(M) \leq (m-1)B$ for $\mu=0$. Note that these estimates are sharp in view of \eqref{cmuRm} and \eqref{cmuHm} for $\mu=0$.}
\end{remark}

\begin{proof}[Proof of Theorem \ref{A_1w_base}]
	Set $c^\ast=\frac{k}{\beta}c_\mu(M)$ and suppose, by contradiction, that \eqref{f_cmu_M} is false. Choose $\eps>0$ small enough so that
	$$
	\Omega_\eps = \{ x \in M : f(u(x)) > c^\ast + \eps \} \neq \emptyset \, .
	$$
	Since $f$ is continuous and monotone nondecreasing, there exists $s_\eps\in\RR\cup\{-\infty\}$ such that
	$$
	\Omega_\eps = \{ x \in M : u(x) > s_\eps \}
	$$
	and $\Omega_\eps$ is open by continuity of $u$. 
	Consider the vector field $X:M \to TM$ given by
	$$
	X = A(x,u(x),\nabla u(x)) \, .
	$$
	The assumptions on $A$ and $u$ ensure that $X$ is of class $\mathcal C^1$ and that $|X|\leq k$ on $M$. Therefore, since $M$ is complete, we know that $X$ has a flow $\Psi(t,x)$ well-defined for every $t\in\RR$ and $x\in M$.  We observe now that $\Psi:[0,+\infty) \times \Omega_\eps \to \Omega_\eps$. Actually, for any $x\in\Omega_\eps$ denote $\gamma(t)=\Psi(t,x)$. Then 
	$$
	\frac{\di}{\di t} u(\gamma(t)) = \langle \nabla u, \dot\gamma(t) \rangle_{\gamma(t)} = \langle \nabla u, X_{\gamma(t)} \rangle_{\gamma(t)} = \langle \nabla u, A(\gamma(t),u,\nabla u) \rangle_{\gamma(t)} \geq 0 \, ,
	$$
	which implies $u(\gamma(t)) \geq u(\gamma(0)) = u(x) > s_\eps$ for every $t\in[0,+\infty)$. Thus $\gamma(t)$ lies in $\Omega_\eps$ for every $t\in[0,+\infty)$.
	
	Now fix $p\in\Omega_\eps$ and $\delta>0$ such that $B_\delta(p) \subseteq \Omega_\eps$ and $\partial B_\delta(p)$ is smooth. For every $t\in[0,+\infty)$ let
	$$
	U_t = \Psi(t,B_{\delta}(p))=\Psi_t(B_{\delta}(p))
	$$
	and 
	$$
	g(t)=\vol(U_t)=\int_{\Psi_t(B_{\delta}(p))}\di v=\int_{B_{\delta}(p)}\Psi_t^{*}(\di v).
	$$
	Since $\overline{B_{\delta}(p)}$ is compact, we can differentiate under the integral and, using that $\mathcal L_X(\di v) = \div X \, \di v$, where $\mathcal L_X$ denotes the Lie derivative with respect to $X$, we get
	\begin{eqnarray*}
		g'(t) & = & \left.\frac{\di}{\di\tau}\right|_{\tau=0}\int_{\Psi_{\tau+t}(B_{\delta}(p))}\di v= 
		\left.\frac{\di}{\di\tau}\right|_{\tau=0}\int_{\Psi_{t}(B_{\delta}(p))}\Psi_\tau^*(\di v) \\ [0.2cm]
		{} & = & \int_{\Psi_{t}(B_{\delta}(p))}\left.\frac{\di}{\di\tau}\right|_{\tau=0}\Psi_\tau^*(\di v)= 	\int_{U_t}\div X\di v.
	\end{eqnarray*}
	Since $\Psi(t,\Omega_\eps)\subseteq\Omega_\eps$, we have $U_t\subseteq\Omega_\eps$ and, by  \eqref{L1_Om_M}, we have
	\begin{equation} \label{g'_M}
	g'(t) = \int_{U_t} \div X\di v=\int_{U_t}Lu\di v \geq \int_{U_t} b(x) f(u)\di v > (c^\ast+\eps) \int_{U_t} b(x)\di v \, .
	\end{equation}
	By continuity, we can choose $\beta^\ast\in\RR$ satisfying
	\[
		\beta > \beta^\ast > \frac{c^\ast}{c^\ast+\eps} \beta \, .
	\]
	Note that, by the definition of $c^\ast$, the second inequality amounts to
	\begin{equation} \label{betast}
		\frac{(c^\ast+\eps)\beta^\ast}{k} > c_\mu (M) \, .
	\end{equation}
	Because of the assumptions on $b$ and since $\beta^\ast < \beta$, we can choose $T>0$ such that
	\begin{equation}
	b(x) \geq \beta^\ast r(x)^{-\mu} \qquad \forall \, x\not\in \overline{B_T} \, .
	\end{equation}
	Then, there exists $R_0>0$ such that
	\begin{equation} \label{inf_b_M}
	\inf_{B_R} b(x) \geq \beta^\ast R^{-\mu} \qquad \forall \, R\geq R_0 \, .
	\end{equation}
	Indeed, it suffices to choose $R_0>T$ such that $\beta^\ast R_0^{-\mu} \leq \min_{\overline{B_T}}b>0$. .In that case and for every $R>R_0$, if $x\in B_R\setminus \overline{B_T}$ then we know that $b(x)\geq \beta^\ast r(x)^{-\mu}\geq \beta^\ast R^{-\mu}$. On the other hand, if $x\in\overline{B_T}$ then $b(x)\geq \min_{\overline{B_T}}b\geq \beta^\ast R_0^{-\mu}\geq \beta^\ast R^{-\mu}$. Summing up, for every $R>R_0$ and for every $x\in B_R$ we have $b(x)\geq \beta^\ast R^{-\mu}$, that is \eqref{inf_b_M}.
	
	As a consequence of $|X|\leq k$ and of the triangle inequality,
	\begin{equation}
	\label{luis1_M}
	d(p,x)<\delta+kt \qquad \forall x\in U_t.
	\end{equation}
	To see it, let $x=\Psi(t,y)\in U_t$ with $y\in B_\delta(p)$. Then 
	\[
	d(p,x)=d(p,\Psi(t,y))\leq d(p,y)+d(y,\Psi(t,y))<\delta+d(y,\Psi(t,y)),
	\]
	with
	\[
	d(y,\Psi(t,y))\leq\int_0^t\left|\frac{\partial\Psi}{\partial s}(s,y)\right|\di s=
	\int_0^t|X_{\Psi(s,y)}|\di s\leq kt,
	\]
	which implies \eqref{luis1_M}. Therefore
	\begin{equation} \label{Ut_in_M}
	U_t \subseteq B_{\delta+kt}(p) \subseteq B_{r(p)+\delta+kt} \qquad \forall \, t \geq 0 \, .
	\end{equation}
	From \eqref{g'_M} we have
	$$
	g'(t) \geq (c^\ast+\eps)\, \inf_{U_t}b(x) \, g(t) \qquad \forall \, t \geq 0 \, .
	$$
	Setting $R_1 = \max\{R_0,r(p)+\delta\}$, from \eqref{inf_b_M} and \eqref{Ut_in_M} we get 
	$$
	\inf_{U_t}b(x)\geq \inf_{B_{r(p)+\delta+kt}}b(x)\geq (R_1+kt)^{-\mu}\beta^\ast,
	$$
	and then 
	$$
	g'(t) \geq (c^\ast+\eps)\beta^\ast (R_1+kt)^{-\mu} g(t) \qquad \forall \, t \geq 0 \, ,
	$$
	equivalently,
	\[
	\frac{g'(t)}{g(t)} \geq (c^\ast+\eps)\beta^\ast (R_1+kt)^{-\mu}  \qquad \forall \, t \geq 0 \, .
	\]
	Integrating this inequality we obtain
	\begin{equation} \label{g'2_M}
	\log (g(t)) \geq \log(g(0)) + (c^\ast+\eps)\beta^\ast \int_0^t (R_1+ks)^{-\mu} \, \di s \, .
	\end{equation}
	Since $U_t \subseteq B_{\delta+kt}(p) \subseteq B_{R_1+kt}$, we have $g(t) \leq \vol(B_{R_1+kt})$, thus letting $R=R_1+kt$ and changing the integration variable in \eqref{g'2_M} we get
	\begin{eqnarray*}
		\log \vol(B_R) & \geq & \log(g(t)) \\[0.3cm]
		{} & \geq & \log(g(0)) + \frac{(c^\ast+\eps)\beta^\ast}{k} \int_{R_1}^R s^{-\mu} \, \di s\\ [0.3cm]
		{} & = & \log(g(0))+\frac{(c^\ast+\eps)\beta^\ast}{k}\begin{cases}
			\displaystyle \left(\frac{R^{1-\mu}}{1-\mu}-\frac{R_1^{1-\mu}}{1-\mu}\right) & \quad \text{if } \, 0 \leq \mu < 1 \\[0.4cm]
			\displaystyle \left(\log R-\log R_1\right) & \quad \text{if } \, \mu = 1
		\end{cases}
	\end{eqnarray*}
	for every $R\geq R_1$. In other words, for all $R\geq R_1$ we respectively have
	\[
		\frac{(1-\mu)\log\vol(B_R)}{R^{1-\mu}} \geq \frac{(c^\ast+\eps)\beta^\ast}{k} \left(1-\frac{R_1^{1-\mu}}{R^{1-\mu}}\right) + \frac{(1-\mu)\log(g(0))}{R^{1-\mu}}
	\]
	in case $0\leq \mu < 1$, or
	\[
		\frac{\log\vol(B_R)}{\log R} \geq \frac{(c^\ast+\eps)\beta^\ast}{k} \left(1-\frac{\log R_1}{\log R}\right) + \frac{\log(g(0))}{\log R}
	\]
	in case $\mu=1$. In both cases, letting $R\to+\infty$ and applying the definition \eqref{cmu_M} of $c_\mu(M)$ we conclude from here that
	\[
		c_\mu(M) \geq \frac{(c^\ast+\eps)\beta^\ast}{k} \, ,
	\]
	contradicting \eqref{betast}.
\end{proof}

We can ``localize'' Theorem \ref{A_1w_base} as follows: if $\Omega\subsetneqq M$ is a non-empty open set and $\mu\in[0,1]$ is a given parameter, then fixing $o\in \Omega$ we define
\begin{equation} \label{cmu_logv}
	c_\mu(\Omega) = \begin{cases}
		\displaystyle\liminf_{R\to+\infty} \frac{(1-\mu) \log\vol(B_R\cap\Omega)}{R^{1-\mu}} & \quad \text{if } \, 0 \leq \mu < 1 \\[0.4cm]
		\displaystyle\liminf_{R\to+\infty} \frac{\log\vol(B_R\cap\Omega)}{\log R} & \quad \text{if } \, \mu = 1
	\end{cases}
\end{equation}
where $B_R = B_R(o)$. As in case $\Omega = M$, the resulting value of $c_\mu(\Omega)$ is independent of the choice of the base point $o\in \Omega$. Moreover, the set function $c_\mu$ is non-negative and monotonic non-decreasing with respect to inclusion, that is, for every $\mu\in[0,1]$ and for any pair of non-empty open sets $\Omega_1,\Omega_2\subseteq M$ with $\Omega_1\subseteq\Omega_2$ we have
\[
	0 \leq c_\mu(\Omega_1) \leq c_\mu(\Omega_2) \leq c_\mu(M) \, .
\]

\begin{theorem} \label{A_1w_base_bis}
	Let $(M,\metric)$ be a complete manifold, $\Omega\subsetneqq M$ a non-empty open set, $A : \RR \times TM|_\Omega \to TM|_\Omega$ a $\mathcal{C}^1$ weakly-$1$-coercive generalized bundle map with coercivity constant $k>0$ and let $L$ be the corresponding operator given by \eqref{L_def}. Let $f\in\mathcal C(\RR)$ be non-decreasing and $b\in\mathcal C(M)$ be positive and satisfy condition \eqref{condB} for given parameters $\mu\in[0,1]$ and $\beta>0$. If $u\in\mathcal C(\overline{\Omega}) \cap \mathcal C^2(\Omega)$ satisfies
	\begin{equation} \label{L1_Om}
		Lu \geq b(x) f(u) \qquad \text{on } \, \Omega \, ,
	\end{equation}
	then
	\begin{equation} \label{f_cmu}
		f(u) \leq \max\left\{\frac{k}{\beta}c_\mu(\Omega),\sup_{\partial\Omega} f(u)\right\} \quad \text{on } \, \Omega \, .
	\end{equation}
	In particular, if $c_\mu(\Omega) = 0$ then
	\begin{equation} \label{f_cmu_bis}
		f(u) \leq \max\left\{0,\sup_{\partial\Omega} f(u)\right\} \quad \text{on } \, \Omega.
	\end{equation}
\end{theorem}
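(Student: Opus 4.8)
The plan is to run the argument of the proof of Theorem \ref{A_1w_base} almost verbatim, the only genuinely new ingredient being that the vector field $X = A(x,u,\nabla u)$ is now a priori only defined on $\Omega$, so its flow need not be complete; one must check that, restricted to the right superlevel set, it is. First I would dispose of the trivial cases: if $c_\mu(\Omega) = +\infty$ or $\sup_{\partial\Omega} f(u) = +\infty$ there is nothing to prove, so I set $c^\ast = \tfrac{k}{\beta} c_\mu(\Omega) < +\infty$, $\Lambda = \sup_{\partial\Omega} f(u) < +\infty$, and argue by contradiction: if \eqref{f_cmu} fails, then for some small $\eps > 0$
\[
	\Omega_\eps = \{ x\in\Omega : f(u(x)) > \max\{c^\ast,\Lambda\} + \eps \} \neq \emptyset \, .
\]
Exactly as in the proof of Theorem \ref{A_1w_base}, monotonicity and continuity of $f$ produce a threshold $s_\eps \in \RR\cup\{-\infty\}$ with $\Omega_\eps = \{x\in\Omega : u(x) > s_\eps\}$, and $\Omega_\eps$ is open.

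The key observation, which plays the role that completeness of $M$ played before, is that $\overline{\Omega_\eps}\subseteq\Omega$. Indeed $\overline{\Omega_\eps}\subseteq\overline{\Omega}$, and no point $x_0\in\partial\Omega$ can lie in $\overline{\Omega_\eps}$: since $u\in\mathcal C(\overline\Omega)$, the function $f\circ u$ is continuous on $\overline\Omega$ and $f(u(x_0))\leq\Lambda<\max\{c^\ast,\Lambda\}+\eps$, so an entire neighbourhood of $x_0$ misses $\Omega_\eps$. Hence $X = A(x,u,\nabla u)$ is a $\mathcal C^1$ vector field on the open set $\Omega\supseteq\overline{\Omega_\eps}$ with $|X|\leq k$, and for every $p\in\Omega_\eps$ the maximal integral curve $\gamma(t)=\Psi(t,p)$ with $\gamma(0)=p$ satisfies, by \eqref{wc1},
\[
	\tfrac{\di}{\di t} u(\gamma(t)) = \langle \nabla u, A(\gamma(t),u,\nabla u)\rangle \geq 0 \, ,
\]
so $u(\gamma(t))\geq u(p)>s_\eps$, i.e. $\gamma(t)\in\Omega_\eps$, for all $t$ in the forward interval of existence $[0,T_{\max})$. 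If $T_{\max}<+\infty$, the bound $|X|\leq k$ makes $\gamma$ uniformly Lipschitz on $[0,T_{\max})$, so $\gamma(t)$ converges, as $t\to T_{\max}$, to a point $q\in\overline{\Omega_\eps}\subseteq\Omega$ (using completeness of $M$); but then $\gamma$ could be prolonged past $T_{\max}$, contradicting maximality. Therefore $T_{\max}=+\infty$ and $\Psi:[0,+\infty)\times\Omega_\eps\to\Omega_\eps$ is well defined.

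From here the computation of Theorem \ref{A_1w_base} carries over word for word, with $\vol(B_R\cap\Omega)$ in place of $\vol(B_R)$. Fix $p\in\Omega_\eps$ and $\delta>0$ with $B_\delta(p)\subseteq\Omega_\eps$, put $U_t=\Psi_t(B_\delta(p))\subseteq\Omega_\eps$ and $g(t)=\vol(U_t)$; differentiating under the integral sign via $\mathcal L_X(\di v)=\div X\,\di v$ and using \eqref{L1_Om} and $f(u)>\max\{c^\ast,\Lambda\}+\eps\geq c^\ast+\eps$ on $U_t$ gives $g'(t)=\int_{U_t}\div X\,\di v=\int_{U_t}Lu\,\di v>(c^\ast+\eps)\int_{U_t}b(x)\,\di v$. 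Choosing $\beta^\ast$ with $\beta>\beta^\ast>\tfrac{c^\ast}{c^\ast+\eps}\beta$, so that $\tfrac{(c^\ast+\eps)\beta^\ast}{k}>c_\mu(\Omega)$, and using condition \eqref{condB} on $b$ together with the estimate \eqref{inf_b_M} and the inclusion $U_t\subseteq B_{r(p)+\delta+kt}(o)$, one obtains $\tfrac{g'(t)}{g(t)}\geq(c^\ast+\eps)\beta^\ast(R_1+kt)^{-\mu}$ with $R_1=\max\{R_0,r(p)+\delta\}$; integrating and invoking the additional inclusion $U_t\subseteq B_{R_1+kt}(o)\cap\Omega$ — which is exactly what forces $\vol(B_R\cap\Omega)$ to appear — yields, for $R=R_1+kt\geq R_1$, the estimate $\log\vol(B_R\cap\Omega)\geq\log g(0)+\tfrac{(c^\ast+\eps)\beta^\ast}{k}\int_{R_1}^R s^{-\mu}\,\di s$. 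Dividing by $R^{1-\mu}/(1-\mu)$ when $\mu<1$, or by $\log R$ when $\mu=1$, and letting $R\to+\infty$ gives $c_\mu(\Omega)\geq\tfrac{(c^\ast+\eps)\beta^\ast}{k}$, a contradiction. The main obstacle is thus the forward completeness of the flow on $\Omega_\eps$ (resolved by the inclusion $\overline{\Omega_\eps}\subseteq\Omega$ and the speed bound); the rest is a routine bookkeeping adaptation of the proof of Theorem \ref{A_1w_base}, the boundary value $\Lambda$ entering only through the definition of $\Omega_\eps$.
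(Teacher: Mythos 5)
Your proposal is correct and follows the paper's own proof essentially verbatim: the same contradiction setup with the superlevel set $\Omega_\eps$, the same key observation that $\overline{\Omega_\eps}\subseteq\Omega$ (via continuity of $f\circ u$ on $\overline\Omega$ and the boundary supremum) combined with $|X|\leq k$ and completeness of $M$ to get forward completeness of the flow on $\Omega_\eps$, and then the same volume-growth ODE argument with $\vol(B_R\cap\Omega)$ replacing $\vol(B_R)$. The only cosmetic difference is that you keep $c^\ast=\tfrac{k}{\beta}c_\mu(\Omega)$ and $\Lambda$ separate rather than merging them into a single constant as the paper does, which changes nothing in the argument.
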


\begin{proof}[Proof of Theorem \ref{A_1w_base_bis}]
	If $\sup_{\partial\Omega} f(u)=+\infty$ then the conclusion is trivially satisfied; otherwise we proceed in a way similar to that of Theorem \ref{A_1w_base} and set $c^\ast =\max\left\{\frac{k}{\beta}c_\mu(\Omega),c\right\}$, with $c=\sup_{\partial\Omega} f(u)\in\R$. Note that $c^\ast \geq 0$, since $c_\mu(\Omega) \geq 0$ as remarked above. Suppose, by contradiction, that \eqref{f_cmu} is false and choose $\eps>0$ small enough so that
	\[
		\Omega_\eps = \{ x \in \Omega : f(u(x)) > c^\ast + \eps \} \neq \emptyset \, ,
	\]
	which is open by continuity of $f$ and $u$. Let us see that $\overline{\Omega_\eps}\subseteq\Omega$. Indeed, let $\tilde x\in\overline{\Omega_\eps}$, then, for some $\{x_j\}_{j\in\N} \subset \Omega_\eps$, $\tilde x = \lim x_j$ and then $f(u(\tilde x)) = \lim f(u(x_j)) \geq c^\ast+\eps > c$, which means that $\tilde x\not\in\partial\Omega$ and hence $\tilde x\in\Omega$. Therefore, there exists $s_\eps\in\RR$ such that
	\[
		\Omega_\eps = \{ x \in \Omega : u(x) > s_\eps \} \, .
	\]
	
	Consider the vector field $X:\Omega \to TM$ defined on $\Omega\subsetneqq M$ by
	\[
		X = A(x,u(x),\nabla u(x)) \, .
	\]
	The assumptions on $A$ and $u$ ensure that $X$ is of class $\mathcal C^1(\Omega)$ and that $|X|\leq k$ on $\Omega$. Although $X$ is not defined on all of $M$, we observe that $X$ still has a well-defined flow $\Psi : [0,+\infty) \times \Omega_\eps \to \Omega_\eps$. Indeed, we know that for any $x\in\Omega_\eps$ there exists a maximal time $T_x \in (0,+\infty]$ such that the initial value problem
	\begin{equation} \label{CpX}
	\begin{cases}
	\gamma(0) = x\in\Omega_\eps \\
	\dot\gamma(t) = X_{\gamma(t)}
	\end{cases}
	\end{equation}
	has a (unique) solution $\gamma : [0,T_x) \to \Omega$. We will show that $T_x=+\infty$. Towards this aim, we observe that since
	\[
		\frac{\di}{\di t} u(\gamma(t)) = \langle \nabla u, \dot\gamma(t) \rangle_{\gamma(t)} = \langle \nabla u, X_{\gamma(t)} \rangle_{\gamma(t)} = \langle \nabla u, A(\gamma(t),u,\nabla u) \rangle_{\gamma(t)} \geq 0 \, ,
	\]
	we have $u(\gamma(t)) \geq u(\gamma(0)) = u(x) > s_\eps$ for every $t\in[0,T_x)$. Thus $\gamma(t)$ lies in $\Omega_\eps$ for every $t\in[0,T_x)$. Suppose by contradiction that $T_x < +\infty$. Since $|X| \leq k$, the curve $\gamma$ has length $\ell(\gamma) \leq kT_x < +\infty$. As $M$ is complete, $\gamma$ has a limit endpoint $\tilde x = \lim_{t\to T_x} \gamma(t) \in M$. Since $\gamma([0,T_x)) \subseteq \Omega_\eps$ we have $\tilde x \in \overline{\Omega_\eps} \subseteq \Omega$,  Thus $u(\tilde x)$ is defined and by continuity 
	$u(\tilde x) = \lim_{t\to T_x} u(\gamma(t))\geq u(x) > s_\eps$. But then $\tilde x \in \Omega_\eps$ and thus $T_x$ is not the maximal existence time for the solution of \eqref{CpX}, contradiction. That is, $T_x = +\infty$.
	
	Once we have guaranteed that $\Psi(t,\Omega_\eps)\subseteq\Omega_\eps$ for every $t\geq 0$, the proof follows exactly the same lines as the proof of Theorem \ref{A_1w_base} (including the choice of $\beta^\ast\in\RR$ satisfying $\beta > \beta^\ast > \frac{c^\ast}{c^\ast+\eps}\beta$) until inequality \eqref{g'2_M}. From that point on, since $U_t \subseteq B_{\delta+kt}(p) \subseteq B_{R_1+kt}$ and also $U_t\subseteq\Omega_\eps\subseteq\Omega$, we have $g(t) \leq \vol(B_{R_1+kt}\cap\Omega)$, and proceeding as in the proof of Theorem \ref{A_1w_base} we obtain
	\[
	\log \vol(B_R\cap\Omega) \geq \log(g(0))+\frac{(c^\ast+\eps)\beta^\ast}{k}
	\begin{cases}
	\displaystyle \left(\frac{R^{1-\mu}}{1-\mu}-\frac{R_1^{1-\mu}}{1-\mu}\right) & \quad \text{if } \, 0 \leq \mu < 1 \\[0.4cm]
	\displaystyle \left(\log R-\log R_1\right) & \quad \text{if } \, \mu = 1 \, .
	\end{cases}
	\]
	for every $R\geq R_1$. Hence, by \eqref{cmu_logv} we conclude from here that
	$c_\mu(\Omega)\geq (c^\ast+\eps)\beta^\ast/k> c^\ast\beta/k\geq c_\mu(\Omega)$, which is a contradiction.
\end{proof}

In the next theorems, we state the weighted version of our previous results, where by this we mean that we are working with operators $L$ of the form
\begin{equation} \label{Lw_2def}
	Lu = w^{-1}\div(w\, A(x,u,\nabla u))
\end{equation}
where $A$ is $\mathcal{C}^1$ regular and $w$ is a strictly positive $\mathcal{C}^1$ function, acting as a ``weight''. Given $h\in\mathcal{C}^{1}(M)$, it is standard to denote by $\div_h$ the ``weighted'' or ``drift'' divergence operator defined by
\[
	\div_h X = \div X - \langle\nabla h,X\rangle
\]
for vector fields $X:M\rightarrow TM$, and by $\vol_h$ the ``weighted'' volume defined by
\[
	\vol_h(U)=\int_{U}e^{-h}\di v
\]
for measurable subsets $U\subseteq M$. With this notation, the definition \eqref{Lw_2def} restates as
\[
	Lu = \div_h A(x,u,\nabla u) \qquad \text{for } \, h = -\log w \, .
\]
The next two theorems are stated in terms of weighted versions of the parameters $c_\mu(M)$ and $c_\mu(\Omega)$: fixing as usual an origin $o\in M$ and denoting $B_R = B_R(o)$, if $\Omega\subseteq M$ is open and non-empty then for $w\in \mathcal{C}^1(\Omega)$ such that $w>0$ in $\Omega$ and for $\mu\in[0,1]$ we define
\begin{equation} \label{cmu_MW}
	c^w_\mu(\Omega) = \begin{cases}
		\displaystyle\liminf_{R\to+\infty} \frac{(1-\mu) \log \int_{B_R\cap\Omega} w}{R^{1-\mu}} & \quad \text{if } \, 0 \leq \mu < 1 \\[0.4cm]
		\displaystyle\liminf_{R\to+\infty} \frac{\log \int_{B_R\cap\Omega} w}{\log R} & \quad \text{if } \, \mu = 1 \, .
	\end{cases}
\end{equation}

\begin{theorem} \label{A_1w_base_W} % weighted version
	Let $(M,\metric)$ and $A$ be as in Theorem \ref{A_1w_base}, let $w\in\mathcal C^1(M)$ be strictly positive and let $L$ be the corresponding operator given by \eqref{Lw_2def}. Let $f\in\mathcal C(\RR)$ be non-decreasing and $b\in\mathcal C(M)$ be positive and satisfy condition \eqref{condB} for given parameters $\mu\in[0,1]$ and $\beta>0$. If $u\in\mathcal C^2(M)$ satisfies
	\begin{equation} \label{L1_Om_MW}
		Lu \geq b(x)f(u) \qquad \text{on } \, M
	\end{equation}
	then
	\begin{equation}\label{f_cmu_MW}
		f(u) \leq \frac{k}{\beta} c^w_\mu(M) \qquad \text{on } \, M \, .
	\end{equation}
	In particular, if $c^w_\mu(M)=0$ then
	\[
		f(u) \leq 0 \qquad \text{on } \, M .
	\]
\end{theorem}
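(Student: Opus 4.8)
The plan is to repeat the proof of Theorem~\ref{A_1w_base} almost verbatim, replacing the Riemannian volume $\vol$ everywhere by the weighted volume $\vol_h$ associated to $h=-\log w$. The only genuinely new ingredient is the observation that the weighted divergence $\div_h X=w^{-1}\div(wX)$, and hence the weighted operator $L$ in~\eqref{Lw_2def}, is precisely what arises when one differentiates the weighted volume of a flow domain; once this is in place, none of the estimates in the unweighted proof needs to be altered.

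First I would set $c^\ast=\frac{k}{\beta}c^w_\mu(M)$ and assume, by contradiction, that \eqref{f_cmu_MW} fails. Then for $\eps>0$ small the open set $\Omega_\eps=\{x\in M:f(u(x))>c^\ast+\eps\}$ is non-empty, and since $f$ is continuous and non-decreasing it has the form $\Omega_\eps=\{u>s_\eps\}$ for some $s_\eps\in\RR\cup\{-\infty\}$. As in Theorem~\ref{A_1w_base}, the vector field $X=A(x,u,\nabla u)$ is of class $\mathcal C^1$ with $|X|\le k$ on $M$, so by completeness its flow $\Psi$ is defined on all of $\RR\times M$; from \eqref{wc1} we get $\frac{\di}{\di t}u(\Psi(t,x))=\langle A(x,u,\nabla u),\nabla u\rangle\ge 0$, whence $u$ is non-decreasing along flow lines and $\Psi([0,+\infty)\times\Omega_\eps)\subseteq\Omega_\eps$.

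Next, fixing $p\in\Omega_\eps$ and $\delta>0$ with $\overline{B_\delta(p)}\subseteq\Omega_\eps$, I would put $U_t=\Psi_t(B_\delta(p))$ and $g(t)=\vol_h(U_t)=\int_{U_t}w\,\di v=\int_{B_\delta(p)}\Psi_t^\ast(w\,\di v)$. Differentiating under the integral sign (legitimate since $\overline{B_\delta(p)}$ is compact) and using the identity $\mathcal L_X(w\,\di v)=\div(wX)\,\di v=\bigl(w^{-1}\div(wX)\bigr)w\,\di v=(Lu)\,w\,\di v$, which is the heart of the matter and holds because $X=A(x,u,\nabla u)$, I obtain
\[
g'(t)=\int_{U_t}(Lu)\,w\,\di v\ge\int_{U_t}b(x)f(u)\,w\,\di v>(c^\ast+\eps)\int_{U_t}b(x)\,w\,\di v\ge(c^\ast+\eps)\Bigl(\inf_{U_t}b\Bigr)g(t),
\]
using \eqref{L1_Om_MW}, the definition of $\Omega_\eps$, and $w>0$. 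From here the argument is word for word that of Theorem~\ref{A_1w_base}: choose $\beta^\ast$ with $\beta>\beta^\ast>\frac{c^\ast}{c^\ast+\eps}\beta$, i.e.\ $\frac{(c^\ast+\eps)\beta^\ast}{k}>c^w_\mu(M)$; invoke condition~\eqref{condB} to get $\inf_{B_R}b\ge\beta^\ast R^{-\mu}$ for $R$ large; use $|X|\le k$ and the triangle inequality to get $U_t\subseteq B_{R_1+kt}$ for a suitable $R_1$, hence $\inf_{U_t}b\ge\beta^\ast(R_1+kt)^{-\mu}$ and $g(t)\le\int_{B_{R_1+kt}}w$; integrate $g'/g\ge(c^\ast+\eps)\beta^\ast(R_1+kt)^{-\mu}$ on $[0,t]$, noting $g(0)=\int_{B_\delta(p)}w>0$ so $\log g(0)$ is finite; and finally, writing $R=R_1+kt$, divide by $R^{1-\mu}/(1-\mu)$ (resp.\ by $\log R$ when $\mu=1$) and let $R\to+\infty$, so that the definition~\eqref{cmu_MW} of $c^w_\mu(M)$ yields $c^w_\mu(M)\ge\frac{(c^\ast+\eps)\beta^\ast}{k}$, contradicting the choice of $\beta^\ast$. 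The final assertion is the case $c^w_\mu(M)=0$.

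I expect the only real obstacle to be the Lie-derivative identity $\mathcal L_X(w\,\di v)=(Lu)\,w\,\di v$, i.e.\ correctly identifying the weighted operator with the rate of change of the weighted volume; everything downstream of it is insensitive to the presence of $w$, since the remaining estimates use only $|X|\le k$, the triangle inequality, and \eqref{wc1}. A companion "localized" weighted version (in the spirit of Theorem~\ref{A_1w_base_bis}) would be proved the same way, replacing the global flow by the argument with maximal existence time $T_x$ to keep the trajectories inside $\Omega_\eps$.
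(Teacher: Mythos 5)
Your proposal is correct and follows essentially the same route as the paper: the paper also replaces $g(t)=\vol(U_t)$ by the weighted volume $g_h(t)=\vol_h(U_t)=\int_{U_t}e^{-h}\,\di v$ with $h=-\log w$, computes $g_h'(t)=\int_{U_t}e^{-h}\div_h X\,\di v$ (which is exactly your identity $\mathcal L_X(w\,\di v)=\div(wX)\,\di v=(Lu)\,w\,\di v$), and then runs the argument of Theorem \ref{A_1w_base} unchanged with $c^w_\mu(M)$ in place of $c_\mu(M)$. No gaps.
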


\begin{proof}
	Setting $h=-\log w$, the proof goes exactly as that of Theorem \ref{A_1w_base}, after replacing the function $g(t)$ with the function
	\[
		g_h(t)=\vol_h(U_t)=\int_{\Psi_t(B_\delta(p))}e^{-h}\di v=\int_{B_\delta(p)}\Psi_t^*(e^{-h}\di v) \, ,
	\]
	by observing that
	\begin{eqnarray*}
		g_h'(t) & = & \left.\frac{\di}{\di\tau}\right|_{\tau=0} \int_{\Psi_{\tau+t}(B_\delta(p))}e^{-h}\di v=\left.\frac{\di}{\di\tau}\right|_{\tau=0}\int_{\Psi_{t}(B_\delta(p))}\Psi_\tau^*(e^{-h}\di v)\\
		{} & = & 
		\int_{\Psi_{t}(B_\delta(p))}\left.\frac{\di}{\di\tau}\right|_{\tau=0}\Psi_\tau^*(e^{-h}\di v)=\int_{\Psi_{t}(B_\delta(p))}\left.\frac{\di}{\di\tau}\right|_{\tau=0}\left(e^{-h\circ\Psi_\tau}\Psi_\tau^*(\di v)\right)\\
		{} & = & 
		\int_{\Psi_{t}(B_\delta(p))}e^{-h}(\div X-\langle\nabla h,X\rangle)\di v=\int_{\Psi_{t}(B_\delta(p))}e^{-h}\div_h X\di v.
	\end{eqnarray*}
\end{proof}

Similarly, reasoning as in the proof of Theorem \ref{A_1w_base_bis}, we also have

\begin{theorem} \label{A_1w_base_bisW}
	Let $(M,\metric)$, $\Omega$ and $A$ be as in Theorem \ref{A_1w_base_bis}, let $w\in\mathcal C^1(\Omega)$ be strictly positive and let $L$ be the corresponding operator given by \eqref{Lw_2def}. Let $f\in\mathcal C(\RR)$ be non-decreasing and $b\in\mathcal C(M)$ be positive and satisfy condition \eqref{condB} for given parameters $\mu\in[0,1]$ and $\beta>0$. If $u\in\mathcal C(\overline{\Omega}) \cap \mathcal C^2(\Omega)$ satisfies
	\begin{equation} \label{L1_OmW}
		L u \geq b(x) f(u) \quad \text{on } \, \Omega
	\end{equation}
	then
	\begin{equation} \label{f_cmuW}
		f(u) \leq \max\left\{\frac{k}{\beta}c^w_\mu(\Omega),\sup_{\partial\Omega}f(u)\right\} \quad \text{on } \, \Omega \, .
	\end{equation}
	In particular, if $c^w_\mu(\Omega)=0$ then 
	\begin{equation} \label{f_cmu_bisW}
		f(u) \leq \max\left\{0,\sup_{\partial\Omega}f(u)\right\} \quad \text{on } \, \Omega.
	\end{equation}
\end{theorem}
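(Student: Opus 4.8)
The plan is to run the argument of the proof of Theorem \ref{A_1w_base_bis} and insert at the appropriate place the weighted volume computation from the proof of Theorem \ref{A_1w_base_W}. As in Theorem \ref{A_1w_base_bis}, if $\sup_{\partial\Omega} f(u) = +\infty$ there is nothing to prove; otherwise set $c = \sup_{\partial\Omega} f(u) \in \RR$ and $c^\ast = \max\{\frac{k}{\beta} c^w_\mu(\Omega), c\}\geq 0$. Arguing by contradiction, assume \eqref{f_cmuW} fails and choose $\eps > 0$ small enough that $\Omega_\eps = \{x\in\Omega : f(u(x)) > c^\ast + \eps\}$ is non-empty; it is open by continuity of $f$ and $u$, and since $c^\ast\geq c$ one checks exactly as in Theorem \ref{A_1w_base_bis} that $\overline{\Omega_\eps}\subseteq\Omega$, so $\Omega_\eps = \{x\in\Omega : u(x) > s_\eps\}$ for some $s_\eps\in\RR$.

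The step requiring the most care, and which is inherited verbatim from Theorem \ref{A_1w_base_bis}, is the construction of a globally defined forward flow for $X = A(x,u(x),\nabla u(x))$, which is a $\mathcal C^1$ field on $\Omega$ with $|X|\leq k$ by \eqref{wc3}. Although $X$ lives only on $\Omega$, it generates a flow $\Psi : [0,+\infty)\times\Omega_\eps \to \Omega_\eps$: by \eqref{wc1} one has $\frac{\di}{\di t}u(\gamma(t)) = \langle\nabla u, A(\gamma,u,\nabla u)\rangle \geq 0$ along any integral curve $\gamma$ issuing from $\Omega_\eps$, so $\gamma$ remains in $\Omega_\eps$; and if its maximal existence time $T_x$ were finite, the length bound $\ell(\gamma)\leq kT_x$ and the completeness of $M$ would produce a limit endpoint $\tilde x\in\overline{\Omega_\eps}\subseteq\Omega$ with $u(\tilde x)\geq u(x) > s_\eps$, hence $\tilde x\in\Omega_\eps$, contradicting maximality. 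Here the inclusion $\overline{\Omega_\eps}\subseteq\Omega$ is precisely what prevents a trajectory from escaping $\Omega$ in finite time.

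From this point on the proof is that of Theorem \ref{A_1w_base_W} with $\Omega$ in place of $M$. Fix $p\in\Omega_\eps$ and $\delta>0$ with $B_\delta(p)\subseteq\Omega_\eps$ and $\partial B_\delta(p)$ smooth, set $U_t = \Psi_t(B_\delta(p))\subseteq\Omega_\eps$, put $h = -\log w$ and $g_h(t) = \vol_h(U_t) = \int_{B_\delta(p)}\Psi_t^*(e^{-h}\di v)$. Differentiating under the integral sign and using $\left.\frac{\di}{\di\tau}\right|_{\tau=0}\Psi_\tau^*(e^{-h}\di v) = e^{-h}\div_h X\,\di v$ gives $g_h'(t) = \int_{U_t} e^{-h}\div_h X\,\di v = \int_{U_t} w\,Lu\,\di v$, and then \eqref{L1_OmW} together with $U_t\subseteq\Omega_\eps$ yields $g_h'(t) > (c^\ast+\eps)\int_{U_t} w\,b(x)\,\di v$. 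Choosing $\beta^\ast$ with $\beta > \beta^\ast > \frac{c^\ast}{c^\ast+\eps}\beta$, using \eqref{condB} to bound $\inf_{B_R} b$ below by $\beta^\ast R^{-\mu}$ for $R\gg 1$, and using $U_t\subseteq B_{\delta+kt}(p)\subseteq B_{R_1+kt}$ (from $|X|\leq k$ and the triangle inequality), one gets $g_h'(t)/g_h(t) \geq (c^\ast+\eps)\beta^\ast(R_1+kt)^{-\mu}$. Since moreover $U_t\subseteq\Omega$, so that $g_h(t)\leq\int_{B_{R_1+kt}\cap\Omega} w$, integrating in $t$ and setting $R = R_1+kt$ produces for all $R\geq R_1$ a lower bound on $\log\int_{B_R\cap\Omega} w$ of the form $\log g_h(0) + \frac{(c^\ast+\eps)\beta^\ast}{k}\big(\frac{R^{1-\mu}-R_1^{1-\mu}}{1-\mu}\big)$ when $0\leq\mu<1$, and $\log g_h(0) + \frac{(c^\ast+\eps)\beta^\ast}{k}(\log R - \log R_1)$ when $\mu=1$. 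Dividing by $R^{1-\mu}$, respectively by $\log R$, and letting $R\to+\infty$, the definition \eqref{cmu_MW} of $c^w_\mu(\Omega)$ forces $c^w_\mu(\Omega)\geq\frac{(c^\ast+\eps)\beta^\ast}{k} > \frac{c^\ast\beta}{k}\geq c^w_\mu(\Omega)$, a contradiction. Hence \eqref{f_cmuW} holds, and \eqref{f_cmu_bisW} is the special case $c^w_\mu(\Omega)=0$.
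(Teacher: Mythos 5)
Your proof is correct and is exactly the argument the paper intends: it merges the localization mechanism of Theorem \ref{A_1w_base_bis} (the set $\Omega_\eps$ with $\overline{\Omega_\eps}\subseteq\Omega$, the flow confinement and maximal-time argument, and the bound $g(t)\leq$ volume of $B_{R_1+kt}\cap\Omega$) with the weighted volume computation $g_h(t)=\vol_h(U_t)$ and $g_h'(t)=\int_{U_t}e^{-h}\div_h X\,\di v$ from Theorem \ref{A_1w_base_W}, leading to the same contradiction with $c^w_\mu(\Omega)$. No gaps to report.
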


More generally, the same line of reasoning can be applied to first-order differential inequalities of the form $\div_h(X) \geq b(x) \psi$ with $X$ a bounded $\mathcal{C}^1$ vector field and $\psi$ a $\mathcal C^1$ function only related by the condition $\langle\nabla\psi,X\rangle \geq 0$.

\begin{theorem} \label{divX_1w}
	Let $(M,\metric)$ be a complete, non-compact manifold, $\Omega\subseteq M$ a non-empty open set and $X : \Omega \to TM$ a $\mathcal{C}^1$ vector field satisfying $|X|\leq k$ in $\Omega$ for some constant $k>0$. Let $b\in \mathcal{C}(M)$ be positive and satisfy condition \eqref{condB} for some $\mu\in[0,1]$ and $\beta>0$. Let $\psi\in \mathcal{C}^1(\Omega)\cap\mathcal C(\overline{\Omega})$ be such that
	\begin{equation}
		\langle \nabla \psi,X \rangle \geq 0 \qquad \text{on } \, \Omega
	\end{equation}
	and, for some $h\in \mathcal{C}^1(\Omega)$,
	\begin{equation}
		\div_h(X) \geq b(x) \, \psi  \quad \text{on } \, \Omega.
	\end{equation}
	Then
	\[
		\sup_\Omega \psi \leq \max\left\{\frac{k}{\beta} c^w_\mu(\Omega), \sup_{\partial\Omega}\psi\right\}
	\]
	where $w=e^{-h}$.
\end{theorem}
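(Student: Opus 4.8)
The plan is to run the argument of Theorem~\ref{A_1w_base_bisW} almost verbatim, with $\psi$ playing the role that $f(u)$ played there; the setting is in fact slightly simpler, since no monotone function needs to be inverted — the superlevel sets of $\psi$ are already the sets on which the flow of $X$ will be confined. If $\sup_{\partial\Omega}\psi=+\infty$ or $c^w_\mu(\Omega)=+\infty$ the asserted inequality is trivial, so assume both are finite and set $c^\ast=\max\{\tfrac{k}{\beta}c^w_\mu(\Omega),\sup_{\partial\Omega}\psi\}$. Here $c^\ast\geq 0$, because $R\mapsto\int_{B_R\cap\Omega}w$ is positive for $R\gg1$ and nondecreasing, so $c^w_\mu(\Omega)\geq 0$ exactly as one checks for $c_\mu$. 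Arguing by contradiction, suppose $\sup_\Omega\psi>c^\ast$ and pick $\eps>0$ so small that $\Omega_\eps=\{x\in\Omega:\psi(x)>c^\ast+\eps\}\neq\emptyset$; this set is open by continuity of $\psi$, and reasoning as in the proof of Theorem~\ref{A_1w_base_bis} it satisfies $\overline{\Omega_\eps}\subseteq\Omega$, since any $\tilde x\in\overline{\Omega_\eps}$ has $\psi(\tilde x)\geq c^\ast+\eps>\sup_{\partial\Omega}\psi$ and hence lies off $\partial\Omega$.

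Next I would show that the flow $\Psi$ of $X$ restricts to $\Psi:[0,+\infty)\times\Omega_\eps\to\Omega_\eps$. Along a maximal integral curve $\gamma(t)=\Psi(t,x)$ with $x\in\Omega_\eps$ one has $\tfrac{\di}{\di t}\psi(\gamma(t))=\langle\nabla\psi,X\rangle_{\gamma(t)}\geq 0$, so $\psi(\gamma(t))\geq\psi(x)>c^\ast+\eps$ and $\gamma$ stays in $\Omega_\eps$ throughout its maximal interval $[0,T_x)$; if $T_x<+\infty$ then, since $|X|\leq k$, the curve has length $\leq kT_x$ and, by completeness of $M$, a limit endpoint $\tilde x\in\overline{\Omega_\eps}\subseteq\Omega$ with $\psi(\tilde x)=\lim_{t\to T_x}\psi(\gamma(t))\geq c^\ast+\eps$, i.e.\ $\tilde x\in\Omega_\eps$, which contradicts the maximality of $T_x$; hence $T_x=+\infty$. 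Fixing then $p\in\Omega_\eps$ and $\delta>0$ with $B_\delta(p)\subseteq\Omega_\eps$ and $\partial B_\delta(p)$ smooth, writing $h$ as in the statement, $w=e^{-h}$, $U_t=\Psi_t(B_\delta(p))$ and $g_h(t)=\vol_h(U_t)=\int_{B_\delta(p)}\Psi_t^\ast(e^{-h}\di v)$, one differentiates under the integral sign exactly as in the proof of Theorem~\ref{A_1w_base_W} to obtain $g_h'(t)=\int_{U_t}e^{-h}\div_h X\,\di v\geq\int_{U_t}e^{-h}b(x)\psi\,\di v>(c^\ast+\eps)\int_{U_t}e^{-h}b(x)\,\di v$, the last step using $U_t\subseteq\Omega_\eps$.

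From this point the argument coincides word for word with the proofs of Theorems~\ref{A_1w_base_bis} and~\ref{A_1w_base_W}: choose $\beta^\ast$ with $\beta>\beta^\ast>\tfrac{c^\ast}{c^\ast+\eps}\beta$, use condition~\eqref{condB} to get $\inf_{B_R}b\geq\beta^\ast R^{-\mu}$ for $R\geq R_0$ together with $U_t\subseteq B_{R_1+kt}$ where $R_1=\max\{R_0,r(p)+\delta\}$, deduce $g_h'(t)/g_h(t)\geq(c^\ast+\eps)\beta^\ast(R_1+kt)^{-\mu}$, integrate, and combine with $g_h(t)\leq\int_{B_{R_1+kt}\cap\Omega}w$; after the substitution $R=R_1+kt$ the resulting lower bound on $\log\int_{B_R\cap\Omega}w$ yields, via the definition~\eqref{cmu_MW} of $c^w_\mu(\Omega)$, the inequality $c^w_\mu(\Omega)\geq\tfrac{(c^\ast+\eps)\beta^\ast}{k}>\tfrac{c^\ast\beta}{k}\geq c^w_\mu(\Omega)$, a contradiction. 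I do not anticipate a genuine obstacle here; the only delicate points are the confinement of the flow to $\Omega_\eps$ and the finiteness of $T_x$, and these go through precisely because $\psi\in\mathcal C(\overline\Omega)$ (which forces $\overline{\Omega_\eps}\subseteq\Omega$) and $\langle\nabla\psi,X\rangle\geq 0$ — exactly as in Theorem~\ref{A_1w_base_bis}.
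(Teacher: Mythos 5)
Your proposal is correct and is precisely the argument the paper intends: Theorem \ref{divX_1w} is stated without a separate proof, only with the remark that ``the same line of reasoning'' as in Theorems \ref{A_1w_base}--\ref{A_1w_base_bisW} applies, and your write-up carries out exactly that reasoning (flow confinement of $X$ in the superlevel set $\Omega_\eps$ of $\psi$ using $\langle\nabla\psi,X\rangle\geq0$ and $\overline{\Omega_\eps}\subseteq\Omega$, growth of the weighted volume $g_h(t)$, and comparison with $c^w_\mu(\Omega)$). The only route the paper takes that differs is the later test-function proof of the more general Theorem \ref{Xlessreg}, which is not needed here.
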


\section{Applications to capillary graphs and prescribed mean curvature graphs}

Here we introduce the mean curvature operator
\begin{equation} \label{MCO}
	\div\left(\frac{\nabla u}{\sqrt{1+|\nabla u|^2}}\right)
\end{equation}
as an example of weakly-1-coercive operator with coercivity constant $k=1$. 

In our next result we assume that $c_\mu(M)=0$ for some $\mu\in[0,1]$. Recall from Remark \ref{remark_cmu} that this is the case if $(M,\metric)$ has, for instance, at most polynomial volume growth and $\mu\in[0,1)$.

\begin{theorem}\label{thcapi1}
	Let $(M,\metric)$ be a complete noncompact Riemannian manifold with $c_\mu(M)=0$ for some $\mu\in[0,1]$ and let $0<b\in C(M)$ satisfy
	\[
	\left\{
	\begin{array}{l@{\quad}l@{\qquad}l}
		b(x) \geq \beta & \forall \, x \in M & \text{if } \, \mu = 0 \\[0.3cm]
		b(x) \geq \beta r(x)^{-\mu} & \text{for } \, r(x) \gg 1 & \text{if } \, 0 < \mu \leq 1
	\end{array}
	\right.
	\]
	for some $\beta>0$. Then the only globally defined $\mathcal{C}^2$ solution $u:M\rightarrow\R$ of the capillary surface equation 
	\begin{equation} \label{CSE}
		\div\left(\frac{\nabla u}{\sqrt{1+|\nabla u|^2}}\right) = b(x)u
	\end{equation}
	is the trivial solution $u\equiv 0$.
\end{theorem}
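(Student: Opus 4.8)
The plan is to apply Theorem \ref{A_1w_base} to the mean curvature operator twice, once to $u$ and once to $-u$, exploiting the fact that the mean curvature operator is weakly $1$-coercive with coercivity constant $k=1$ and that $c_\mu(M)=0$. First I would verify the structural hypotheses: the bundle map $A(x,s,\xi) = \xi/\sqrt{1+|\xi|^2}$ is smooth (in particular $\mathcal C^1$), satisfies $\langle A(x,s,\xi),\xi\rangle = |\xi|^2/\sqrt{1+|\xi|^2} \geq 0$, vanishes at $\xi=0$, and obeys $|A(x,s,\xi)| = |\xi|/\sqrt{1+|\xi|^2} < 1$, so \eqref{wc1}--\eqref{wc3} hold with $k=1$. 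The function $b$ is positive, continuous, and satisfies condition \eqref{condB} for the given $\mu$ and $\beta$ (this is exactly the hypothesis imposed on $b$ in the statement). The remaining ingredient is the choice of $f$: since the right-hand side of \eqref{CSE} is $b(x)u$, I would take $f(t)=t$, which is continuous and non-decreasing on $\R$.

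Next I would run the argument in both directions. Since $u$ solves \eqref{CSE}, in particular $\div\bigl(\nabla u/\sqrt{1+|\nabla u|^2}\bigr) = Lu \geq b(x)u = b(x)f(u)$ on $M$, so Theorem \ref{A_1w_base} (with $k=1$ and $c_\mu(M)=0$) gives $f(u) = u \leq 0$ on $M$, i.e.\ $u\leq 0$. For the reverse inequality, set $v=-u$. By oddness of the operator, $\nabla v/\sqrt{1+|\nabla v|^2} = -\nabla u/\sqrt{1+|\nabla u|^2}$, hence $Lv = \div\bigl(\nabla v/\sqrt{1+|\nabla v|^2}\bigr) = -Lu = -b(x)u = b(x)v$, so $v$ also solves \eqref{CSE}. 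Applying Theorem \ref{A_1w_base} to $v$ with the same $f$, $b$, $\mu$, $\beta$ yields $v=-u\leq 0$ on $M$, i.e.\ $u\geq 0$. Combining the two inequalities forces $u\equiv 0$, which is indeed a solution, proving the claim.

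There is essentially no serious obstacle here: the statement is a direct corollary of Theorem \ref{A_1w_base}, and the only points requiring care are the bookkeeping checks that the mean curvature operator fits the weakly $1$-coercive framework with $k=1$ and that condition \eqref{condB} is precisely what is assumed on $b$. The mildest subtlety is confirming that applying the theorem to $-u$ is legitimate, i.e.\ that the operator is odd in $\nabla u$ so that $-u$ still satisfies the same equation; this is immediate from the explicit form of $A$. One could also phrase the whole thing in a single application by noting that $f(u)\le 0$ and $f(-u)\le 0$ simultaneously, but splitting into two symmetric applications is the cleanest exposition.
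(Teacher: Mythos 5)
Your proposal is correct and follows essentially the same route as the paper: apply Theorem \ref{A_1w_base} with $f(t)=t$ (and $k=1$ for the mean curvature operator) to get $u\leq 0$, then apply it to $v=-u$, which solves the same equation by oddness of the operator, to get $u\geq 0$, hence $u\equiv 0$. The bookkeeping checks you include (weak $1$-coercivity with $k=1$, condition \eqref{condB}, oddness in $\nabla u$) are exactly the points the paper relies on implicitly.
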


\begin{proof}
	Assume that $u\in\mathcal C^2(M)$ is a global solution of (\ref{CSE}). 
	Then, from the last assertion in Theorem \ref{A_1w_base} we deduce $u\leq 0$ on $M$. 
	Consider now the function $v=-u$, which clearly satisfies
	\[
	\div\left(\frac{\nabla v}{\sqrt{1+|\nabla v|^2}}\right)=b(x)v.
	\]
	Then we also obtain that $v\leq 0$ on $M$, in other words $u\geq 0$. Summing up $u\equiv 0$ on $M$.
\end{proof}

A ``localized'' version of the previous result holds as well.

\begin{theorem}\label{thcapi2}
	Let $(M,\metric)$ be a complete non-compact Riemannian manifold and let $\Omega\subsetneqq M$ be an open set. Let $0<b\in C(M)$ satisfy, for some parameters $\mu\in[0,1]$ and $\beta>0$,
		$$
		\left\{
		\begin{array}{l@{\quad}l@{\qquad}l}
			b(x) \geq \beta & \forall \, x \in M & \text{if } \, \mu = 0 \\[0.3cm]
			b(x) \geq \beta r(x)^{-\mu} & \text{for } \, r(x) \gg 1 & \text{if } \, 0 < \mu \leq 1 \, .
		\end{array}
		\right.
		$$
	Let $u\in\mathcal C(\overline{\Omega})\cap\mathcal C^2(\Omega)$ be a solution of the capillary surface equation on $\Omega$ 
	\begin{equation}
	\label{CSE_bis}
	\div\left(\frac{\nabla u}{\sqrt{1+|\nabla u|^2}}\right)=b(x)u.
	\end{equation}
	Then
	\[
		\sup_\Omega u \leq \max\left\{ \frac{c_\mu(\Omega)}{\beta}, \sup_{\partial\Omega}u \right\} \, , \qquad \inf_\Omega u \geq \min\left\{ -\frac{c_\mu(\Omega)}{\beta}, \inf_{\partial\Omega}u \right\}
	\]
	In particular, 
	\[
	\sup_\Omega|u|\leq \max\left\{\frac{c_\mu(\Omega)}{\beta},\sup_{\partial\Omega}|u|\right\}.
	\]
\end{theorem}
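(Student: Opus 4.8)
The plan is to deduce both one-sided estimates directly from Theorem \ref{A_1w_base_bis}, applied to the mean curvature operator, and then to combine them. First I would observe that the operator in \eqref{CSE_bis} is the operator $L$ of the form \eqref{L_def} associated with the bundle map $A(x,s,\xi) = \xi/\sqrt{1+|\xi|^2}$, which does not depend on $(x,s)$ and is smooth in $\xi$; in particular it is a $\mathcal C^1$ Carath\'eodory-type generalized bundle map. It is weakly-$1$-coercive with coercivity constant $k=1$, since $\langle A(x,s,\xi),\xi\rangle = |\xi|^2/\sqrt{1+|\xi|^2}\geq 0$, $A(x,s,0)=0$, and $|A(x,s,\xi)| = |\xi|/\sqrt{1+|\xi|^2} \leq 1$. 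Moreover the hypotheses imposed on $b$ in the statement are exactly (a sufficient condition for) condition \eqref{condB} with the given parameters $\mu\in[0,1]$ and $\beta>0$, as remarked right after the definition of \eqref{condB}.

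Next I would apply Theorem \ref{A_1w_base_bis} with $f(t) = t$, which is continuous and non-decreasing on $\RR$. Since $u\in\mathcal C(\overline{\Omega})\cap\mathcal C^2(\Omega)$ solves \eqref{CSE_bis}, in particular $Lu = b(x)u \geq b(x)f(u)$ on $\Omega$, so \eqref{f_cmu} (with $k=1$) yields
\[
	u \leq \max\Bigl\{\tfrac{1}{\beta}\,c_\mu(\Omega),\ \sup_{\partial\Omega}u\Bigr\}\qquad\text{on }\Omega,
\]
which is the first asserted inequality. For the second one I would pass to $v = -u$. Since $\nabla v = -\nabla u$ and $|\nabla v| = |\nabla u|$, one has $\nabla v/\sqrt{1+|\nabla v|^2} = -\nabla u/\sqrt{1+|\nabla u|^2}$, hence $v$ solves $\div(\nabla v/\sqrt{1+|\nabla v|^2}) = -b(x)u = b(x)v$ on $\Omega$ and $v\in\mathcal C(\overline{\Omega})\cap\mathcal C^2(\Omega)$. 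Applying Theorem \ref{A_1w_base_bis} again, now to $v$, gives $v \leq \max\{\tfrac{1}{\beta}c_\mu(\Omega),\ \sup_{\partial\Omega}v\}$, that is,
\[
	u \geq -\max\Bigl\{\tfrac{1}{\beta}\,c_\mu(\Omega),\ -\inf_{\partial\Omega}u\Bigr\} = \min\Bigl\{-\tfrac{1}{\beta}\,c_\mu(\Omega),\ \inf_{\partial\Omega}u\Bigr\}\qquad\text{on }\Omega.
\]

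Finally, the pointwise bound on $|u|$ follows by combining the two displays: for $x\in\Omega$ one has $|u(x)| = \max\{u(x),-u(x)\} \leq \max\{\sup_\Omega u,\ -\inf_\Omega u\}$, and since $c_\mu(\Omega)\geq 0$ and $\sup_{\partial\Omega}|u| = \max\{\sup_{\partial\Omega}u,\ -\inf_{\partial\Omega}u\}$, the two estimates above give $|u(x)| \leq \max\{\tfrac{1}{\beta}c_\mu(\Omega),\ \sup_{\partial\Omega}|u|\}$. This argument presents no genuine obstacle: the only steps requiring (brief) care are the verification that $A(x,s,\xi)=\xi/\sqrt{1+|\xi|^2}$ satisfies the hypotheses \eqref{wc1}--\eqref{wc3} of Theorem \ref{A_1w_base_bis}, and the bookkeeping of signs when reducing the lower bound for $u$ to the upper bound for $v=-u$.
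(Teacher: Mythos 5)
Your proposal is correct and follows essentially the same route as the paper: apply Theorem \ref{A_1w_base_bis} (with $f(t)=t$, $k=1$ for the mean curvature operator) to $u$, then to $v=-u$ to get the lower bound, and combine the two one-sided estimates for the bound on $|u|$. The only difference is that you spell out the verification of \eqref{wc1}--\eqref{wc3} and the $\sup_{\partial\Omega}|u|$ bookkeeping, which the paper leaves implicit.
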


\begin{proof}
	Assume that $u\in\mathcal C(\overline{\Omega})\cap\mathcal C^2(\Omega)$ is a solution of the capillary surface equation on $\Omega$.
	Then, from Theorem \ref{A_1w_base_bis} we obtain that $u\leq\max\left\{\frac{c_\mu(\Omega)}{\beta},\sup_{\partial\Omega}u\right\}$ on $\Omega$. 
	Consider now the function $v=-u$, which clearly satisfies
	\[
	\div\left(\frac{\nabla v}{\sqrt{1+|\nabla v|^2}}\right)=b(x)v.
	\]
	with $\sup_{\partial\Omega}v=-\inf_{\partial\Omega}u$. 
	Then we also obtain that $v=-u\leq \max\left\{\frac{c_\mu(\Omega)}{\beta},-\inf_{\partial\Omega}u\right\}$ on $\Omega$, that is, $u\geq -\max\left\{\frac{c_\mu(\Omega)}{\beta},-\inf_{\partial\Omega}u\right\}=
	\min\left\{-\frac{c_\mu(\Omega)}{\beta},\inf_{\partial\Omega}u\right\}$. Summing up we have
	\[
	\sup_\Omega u \leq \max\left\{ \frac{c_\mu(\Omega)}{\beta}, \sup_{\partial\Omega}u \right\} \, , \qquad \inf_\Omega u \geq \min\left\{ -\frac{c_\mu(\Omega)}{\beta}, \inf_{\partial\Omega}u \right\}
	\]
\end{proof}
Of course, when $c_\mu(\Omega)=0$ and $\partial\Omega\neq\emptyset$ from the above theorem we deduce that the only solution of \eqref{CSE_bis} on $\Omega$ with $u|_{\partial\Omega}\equiv c=\text{constant}$ has constant sign and satisfies $|u|\leq|c|$ on $\Omega$. In particular, in that case the only solution with $u|_{\partial\Omega} \equiv 0$ is $u\equiv 0$ on $\Omega$.

We now consider the case of a graph with prescribed mean curvature $H(x)$. If the graph $\Gamma_u$ is given by $\Gamma_u(x) = (x,u(x))$ then $u$ has to satisfy
\begin{equation} \label{MCE}
\div\left(\frac{\nabla u}{\sqrt{1+|\nabla u|^2}}\right) = m H(x) \qquad \text{on } \, M \, .
\end{equation}

\begin{theorem}\label{th:pmc1}
	Let $(M,\metric)$ be an $m$-dimensional complete noncompact Riemannian manifold and let $u:M\rightarrow\R$ define an entire graph with positive mean curvature $H(x)$ satisfying, for some parameters $\mu\in[0,1]$ and $\beta>0$,
	\begin{equation} \label{Hcond}
		\left\{
		\begin{array}{l@{\quad}l@{\qquad}l}
			H(x) \geq \dfrac{\beta}{m} & \forall \, x \in M & \text{if } \, \mu = 0 \\[0.3cm]
			H(x) \geq \dfrac{\beta}{m} r(x)^{-\mu} & \text{for } \, r(x) \gg 1 & \text{if } \, 0 < \mu \leq 1 \, .
		\end{array}
		\right.
	\end{equation}
	Then 
	\[
		\beta\leq c_\mu(M)
	\]
	where $c_\mu(M)$ is defined in (\ref{cmu_M}).
\end{theorem}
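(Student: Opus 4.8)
The plan is to obtain the inequality as an immediate consequence of Theorem \ref{A_1w_base} applied to the mean curvature operator with a \emph{constant} right-hand side factor $f$. Let $L$ denote the mean curvature operator $\div\bigl(\nabla u/\sqrt{1+|\nabla u|^2}\bigr)$, which, as recalled at the beginning of this section, is a $\mathcal C^1$ weakly-$1$-coercive operator of the form \eqref{L_def} with coercivity constant $k=1$. Since $u\in\mathcal C^2(M)$ defines an entire graph of prescribed mean curvature $H(x)$, equation \eqref{MCE} gives $Lu = mH(x)$ on all of $M$.

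Next I would set $b(x) := mH(x)$ and take $f\equiv 1$ on $\RR$. By hypothesis $H$ is positive and continuous, so $b\in\mathcal C(M)$ is positive; moreover the growth assumption \eqref{Hcond} on $H$ translates verbatim into condition \eqref{condB} for the function $b=mH$ with the given parameters $\mu\in[0,1]$ and $\beta>0$ (for $\mu=0$ it reads $b(x)\ge\beta$ for all $x\in M$, and for $0<\mu\le1$ it reads $b(x)\ge\beta r(x)^{-\mu}$ for $r(x)\gg 1$, whence $\liminf_{r(x)\to+\infty} r(x)^\mu b(x)\ge\beta$). The constant function $f\equiv1$ is obviously continuous and non-decreasing. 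With these choices we have $Lu = mH(x) = b(x) = b(x)f(u)$ on $M$; in particular $Lu\ge b(x)f(u)$ on $M$, so the hypotheses of Theorem \ref{A_1w_base} are met.

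Applying Theorem \ref{A_1w_base} then gives $f(u)\le \frac{k}{\beta}\,c_\mu(M) = \frac{1}{\beta}\,c_\mu(M)$ on $M$. Since $f(u)\equiv1$, this reads $1\le \frac{1}{\beta}c_\mu(M)$, i.e.\ $\beta\le c_\mu(M)$, which is the assertion.

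The argument is short and presents no serious difficulty; the only points worth spelling out are the identification of \eqref{Hcond} with condition \eqref{condB} for $b=mH$, and the slightly unusual device of choosing $f$ to be a nonzero constant, so that the generic conclusion $f(u)\le \frac{k}{\beta}c_\mu(M)$ of Theorem \ref{A_1w_base}, which normally serves to bound $u$ from above, here collapses instead to the purely metric-measure inequality $\beta\le c_\mu(M)$.
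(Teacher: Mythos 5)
Your proposal is correct and follows exactly the paper's own argument: the paper proves Theorem \ref{th:pmc1} as a direct application of Theorem \ref{A_1w_base} with the choices $f\equiv 1$, $b(x)=mH(x)$ and coercivity constant $k=1$ for the mean curvature operator, using equation \eqref{MCE}. Your identification of \eqref{Hcond} with condition \eqref{condB} and the resulting conclusion $1\le c_\mu(M)/\beta$ are precisely the intended reasoning.
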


The proof of the theorem is a straightforward application of Theorem \ref{A_1w_base} using the mean curvature equation (\ref{MCE}), with the choices $f(u)=1$ and $b(x)=mH(x)$. 

\begin{corollary}\label{coro_pmc1}
	Let $(M,\metric)$ be an $m$-dimensional complete noncompact Riemannian manifold. If $\beta>c_\mu(M)$ for some $\mu\in[0,1]$, then there is no global solution $u:M\rightarrow\R$ of the prescribed mean curvature equation (\ref{MCE}) for a positive function $H\in\mathcal C(M)$ satisfying \eqref{Hcond}.
\end{corollary}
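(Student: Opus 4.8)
The plan is to deduce the corollary as the contrapositive of Theorem \ref{th:pmc1}. I would argue by contradiction: assume there is a globally defined $\mathcal{C}^2$ solution $u : M \to \R$ of \eqref{MCE} for some positive $H\in\mathcal C(M)$ satisfying \eqref{Hcond} with the given parameters $\mu\in[0,1]$, $\beta>0$. Such a $u$ defines an entire graph $\Gamma_u(x)=(x,u(x))$ whose mean curvature is exactly $H(x)$, so it meets every hypothesis of Theorem \ref{th:pmc1}; applying that theorem forces $\beta\leq c_\mu(M)$, which contradicts the standing assumption $\beta>c_\mu(M)$.

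If one prefers a self-contained argument, the same contradiction comes directly from Theorem \ref{A_1w_base}. The only thing to observe is that the mean curvature operator \eqref{MCO} is the weakly-$1$-coercive operator $L$ associated to the bundle map $A(x,s,\xi)=\xi/\sqrt{1+|\xi|^2}$: this $A$ is $\mathcal{C}^1$ and satisfies \eqref{wc1}--\eqref{wc3} with coercivity constant $k=1$, since $\langle A(x,s,\xi),\xi\rangle=|\xi|^2/\sqrt{1+|\xi|^2}\geq 0$, $A(x,s,0)=0$, and $|A(x,s,\xi)|=|\xi|/\sqrt{1+|\xi|^2}<1$. Taking $f\equiv 1$ and $b(x)=mH(x)$, equation \eqref{MCE} reads $Lu=b(x)f(u)$, while \eqref{Hcond} says precisely that $b$ satisfies condition \eqref{condB} for $\mu$ and $\beta$. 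Theorem \ref{A_1w_base} then yields $1=f(u)\leq \frac{k}{\beta}c_\mu(M)=\frac{1}{\beta}c_\mu(M)$, i.e.\ $\beta\leq c_\mu(M)$, the desired contradiction.

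I do not expect a real obstacle here, since the content of the corollary is just the contrapositive of Theorem \ref{th:pmc1}. The only bookkeeping is to match the two cases of \eqref{Hcond} (the constant lower bound when $\mu=0$, and the $r(x)^{-\mu}$ decay bound when $0<\mu\leq 1$) term by term with the two cases of condition \eqref{condB}, after the harmless rescaling by $m$, and to note that $u\in\mathcal{C}^2$ is exactly the regularity demanded by Theorem \ref{A_1w_base}. Should one want the statement for merely continuous, locally $W^{1,1}$ solutions as in the final section of the paper, the argument is unchanged once Theorem \ref{A_1w_base} is replaced by its low-regularity counterpart.
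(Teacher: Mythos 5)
Your proposal is correct and matches the paper's own route: the corollary is indeed the contrapositive of Theorem \ref{th:pmc1}, which the paper itself obtains by applying Theorem \ref{A_1w_base} to the mean curvature operator (coercivity constant $k=1$) with the choices $f\equiv 1$ and $b(x)=mH(x)$, exactly as in your self-contained argument. No gaps.
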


\begin{remark}
	\emph{In the special case $H$ is a positive constant we can take $\mu=0$ and $\beta=mH$. Then Corollary \ref{coro_pmc1} tells us that there exists no entire graph of constant mean curvature $H$ over $M$ if
	\[
		H > \frac{1}{m} \liminf_{R\to+\infty} \frac{\log\vol(B_R)}{R} = \frac{c_0(M)}{m} \, .
	\]
	In particular there exists no such graph for any $H\neq 0$ if $c_0(M) = 0$, which is the case, for instance, if $M$ has at most polynomial volume growth in view of Remark \ref{remark_cmu}. Thus if $(M,\metric) = (\R^m,\metric_{can})$ we recover a well-known result of Chern, \cite{ch65}, and Flanders, \cite{fl66}, that is, there are no entire graphs on $\R^m$ with positive constant mean curvature $H$; while for $(M,\metric) = (\HH^m,\metric_{\HH^m})$ there are no entire graphs on $\HH^m$ with constant mean curvature $H$ satisfying
	\[
		H > \frac{m-1}{m}
	\]
	On the other hand, it is well known, see \cite{sal89} and \cite{adlr20}, that there exist entire graphs of constant positive mean curvature $H$ for any
	\[
		H \leq \frac{m-1}{m} \, .
	\]
	This shows that Corollary \ref{coro_pmc1} is sharp. We also observe that the necessary condition
	\[
		H \leq \frac{1}{m} c_0(M)
	\]
	for the existence of a graph of constant positive mean curvature $H$ is somehow related to that of Salavessa, \cite{sal89}, that is,
	\[
		H \leq \frac{1}{m} \mathfrak h(M)
	\]
	where $\mathfrak h(M)$ is the Cheeger constant of $M$.}
\end{remark}

We shall now discuss an application of Theorem \ref{A_1w_base_W}. Let $(\PP,\metric_{\PP})$ be an $m$-dimensional Riemannian manifold and let $h\in C^\infty(\PP)$, $h>0$ on $\PP$. We can then consider $\overline{M} = \RR {}_h\times\PP$, that is, the product manifold $\RR\times\PP$ with the warped product metric $\overline{\metric}$ given by
\begin{equation}
	\overline{\metric} = h(x)^2 \di t^2 + \metric_{\PP}
\end{equation}
where $t$ is the coordinate on $\R$. For instance, $\HH^{m+1}$, the hyperbolic space of constant sectional curvature $-1$ and dimension $m+1$, admits the description $\HH^{m+1} = \R {}_h\times \HH^m$ with $h(x) = \cosh(r(x))$, where $r(x) = \dist(x,o)$ in $\HH^m$ for some fixed origin $o\in\HH^m$. Thus the metric in $\HH^{m+1}$ is represented in the form
\[
\overline{\metric} = \cosh^2(r(x)) \di t^2 + \metric_{\HH^m} \, .
\]
Given a smooth function $u : \PP \to \RR$ we consider the graph $\psi$ of $u$ on $\PP$, that is, the imbedding
\[
\psi : \PP \to \overline{M}
\]
given by
\begin{equation} \label{psi_def}
\psi(x) = (u(x),x) \, , \qquad x \in \PP \, .
\end{equation}
Graphs of this kind are called equidistant graphs. For more details and results see for instance \cite{bmpr21} and the references therein.

It is well known that $\frac{\partial}{\partial t}$ is a Killing vector field on $\overline{M}$ and we set $\phi_t$ to indicate its flow. Letting $D$ denote the gradient on $(\PP,\metric_{\PP})$ the vector field
\begin{equation} \label{nu_psi}
\nu = \frac{1}{h\sqrt{1+h^2|Du|^2}} \left( \frac{\partial}{\partial t} - h^2(\phi_u)_\ast(Du) \right)
\end{equation}
is a unit normal vector field to the graph $\psi$. We set $H$ to denote the (normalized) mean curvature of $\psi$ in the direction of $\nu$. A computation shows the validity of the formula
\begin{equation}
\div_\PP\left( \frac{h Du}{\sqrt{1+h^2|Du|^2}} \right) + \left\langle \frac{h Du}{\sqrt{1+h^2|Du|^2}} \, , \frac{Dh}{h} \right\rangle = m H
\end{equation}
on $\PP$, that we can put into the form
\begin{equation} \label{divPH}
\div_{-\log h} \left( \frac{hDu}{\sqrt{1+h^2|Du|^2}} \right) = mH
\end{equation}
where the $\div_{-\log h} = (\div_\PP)_{-\log h}$; we have omitted the reference to $\PP$ for the sake of simplicity.

In case of \eqref{divPH}
\[
A(x,s,\xi) = \frac{h(x)\xi}{\sqrt{1+h^2(x)|\xi|^2}}
\]
and therefore the operator in \eqref{divPH} is weakly $1$-coercive with coercivity constant $k=1$. We recall the definition
\begin{equation}
c^h_\mu(\PP) = \begin{cases}
\displaystyle\liminf_{R\to+\infty} \frac{(1-\mu)\log\int_{B_R} h}{R^{1-\mu}} & \quad \text{if } \, 0 \leq \mu < 1 \\
\displaystyle\liminf_{R\to+\infty} \frac{\log\int_{B_R}h}{\log R} & \quad \text{if } \, \mu = 1 \, .
\end{cases}
\end{equation}
Applying Theorem \ref{A_1w_base_W} we have

\begin{theorem} \label{thm:pmc1W}
	Let $(\PP,\metric_\PP)$ be a complete manifold of dimension $m$, $h\in C^\infty(\PP)$, $h>0$ on $\PP$. Let $\psi$ as in \eqref{psi_def} be an equidistant graph in $\overline{M} = \RR {}_h \times \PP$ defined via the smooth function $u : \PP \to \RR$ with mean curvature $H$ in the direction of $\nu$ as in \eqref{nu_psi}. Suppose that $H>0$ and that \eqref{Hcond} is satisfied for some parameters $\mu\in[0,1]$ and $\beta>0$ on $\PP$. Then
	\begin{equation}
	c^h_\mu(\PP) \geq \beta \, .
	\end{equation}
\end{theorem}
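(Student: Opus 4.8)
The plan is to recognize \eqref{divPH} as a particular instance of the weighted weakly-$1$-coercive framework of Section 3 and to apply Theorem \ref{A_1w_base_W} directly, with $\PP$ in the role of $M$ and $h$ in the role of the weight $w$. First I would record, as already observed just before the statement, that the bundle map
\[
A(x,s,\xi) = \frac{h(x)\,\xi}{\sqrt{1+h^2(x)|\xi|^2}}
\]
is $\mathcal C^1$ (indeed $\mathcal C^\infty$, since $1+h^2(x)|\xi|^2 \geq 1 > 0$) on $\RR\times T\PP$ and satisfies $\langle A(x,s,\xi),\xi\rangle = h|\xi|^2/\sqrt{1+h^2|\xi|^2} \geq 0$, $A(x,s,0)=0$, and $|A(x,s,\xi)| = h|\xi|/\sqrt{1+h^2|\xi|^2} \leq 1$; hence $A$ is weakly-$1$-coercive with coercivity constant $k=1$, and since $u$ is smooth the associated weighted operator $Lu = h^{-1}\div(h\,A(x,u,Du)) = \div_{-\log h}\big(hDu/\sqrt{1+h^2|Du|^2}\big)$ is defined pointwise and, by \eqref{divPH}, equals $mH$ on $\PP$.

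Next I would choose $f\equiv 1$, which is continuous and non-decreasing, and $b(x) = mH(x)$, which is continuous and strictly positive because $H>0$. With these choices \eqref{divPH} reads $Lu = b(x)f(u)$ on $\PP$, so in particular $Lu \geq b(x)f(u)$ on $\PP$. Moreover hypothesis \eqref{Hcond} states exactly that $b(x) = mH(x) \geq \beta$ for all $x\in\PP$ when $\mu=0$, and $b(x) = mH(x) \geq \beta\, r(x)^{-\mu}$ for $r(x)\gg 1$ when $0<\mu\leq 1$; as noted right after the definition of condition \eqref{condB}, this guarantees that $b$ satisfies \eqref{condB} for the parameters $\mu$ and $\beta$.

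All the hypotheses of Theorem \ref{A_1w_base_W} now being in force, I would conclude that $f(u) \leq \frac{k}{\beta}\, c^w_\mu(\PP)$ on $\PP$; since $k=1$, $f\equiv 1$, and $w=h$ means $c^w_\mu(\PP) = c^h_\mu(\PP)$, this says $1 \leq \frac{1}{\beta}\, c^h_\mu(\PP)$, which rearranges to $c^h_\mu(\PP) \geq \beta$, as claimed. I do not expect any genuine obstacle here: the substantive work has already been carried out in deriving \eqref{divPH} and in proving Theorem \ref{A_1w_base_W}. The only points requiring (routine) care are the bookkeeping identifications of $\PP$ with the ambient manifold, of $h$ with the weight $w$, and of the constant function $f\equiv 1$ as an admissible right-hand-side profile; the completeness hypothesis needed by Theorem \ref{A_1w_base_W} is supplied by the completeness of $(\PP,\metric_\PP)$.
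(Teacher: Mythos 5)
Your proof is correct and follows exactly the paper's route: the paper likewise obtains Theorem \ref{thm:pmc1W} as a direct application of Theorem \ref{A_1w_base_W} to equation \eqref{divPH}, with the choices $f\equiv 1$, $b(x)=mH(x)$, weight $w=h$ and coercivity constant $k=1$. The verifications you add (weak-$1$-coercivity of $A$, condition \eqref{condB} from \eqref{Hcond}, and the final rearrangement) are the same routine checks implicit in the paper's one-line argument.
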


In particular we have

\begin{corollary} \label{coro_pmc1W}
	In the assumptions of Theorem \ref{thm:pmc1W} suppose that $H$ is a positive constant. Then
	\[
	mH \leq c^h_0(\PP) = \liminf_{R\to+\infty} \frac{\log\int_{B_R} h}{R} \, .
	\]
\end{corollary}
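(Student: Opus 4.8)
The plan is to obtain the corollary as a direct specialization of Theorem \ref{thm:pmc1W}. Since $H$ is a positive constant, I would choose the parameters $\mu = 0$ and $\beta = mH > 0$. With this choice, condition \eqref{Hcond} in the case $\mu = 0$ requires $H(x) \geq \beta/m$ for every $x \in \PP$; as $H$ is constant and equal to $\beta/m$, this holds (with equality). All the remaining hypotheses of Theorem \ref{thm:pmc1W} — completeness of $(\PP,\metric_\PP)$, smoothness and positivity of $h$, the equidistant graph structure induced by $u$, and $H > 0$ — are inherited unchanged.

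Applying Theorem \ref{thm:pmc1W} with these choices then gives $c^h_\mu(\PP) \geq \beta$, that is, $c^h_0(\PP) \geq mH$. It remains only to unwind the definition of $c^h_0(\PP)$ in the case $\mu = 0$, namely
\[
c^h_0(\PP) = \liminf_{R\to+\infty} \frac{\log\int_{B_R} h}{R} \, ,
\]
which yields the stated inequality $mH \leq c^h_0(\PP)$.

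I expect no genuine obstacle here: the only thing to verify is the (immediate) fact that a constant function $H$ satisfies \eqref{Hcond} with $\mu = 0$ and $\beta = mH$, after which the conclusion is a verbatim instance of Theorem \ref{thm:pmc1W}. One could equally well run the argument with any $\mu \in [0,1]$ provided $H(x) \geq \frac{\beta}{m} r(x)^{-\mu}$ holds for $r(x) \gg 1$, but for a truly constant $H$ the choice $\mu = 0$ is the natural and sharpest one.
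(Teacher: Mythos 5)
Your proposal is correct and matches the paper's (implicit) argument exactly: the corollary is stated as an immediate specialization of Theorem \ref{thm:pmc1W} with $\mu=0$ and $\beta=mH$, and your verification that a positive constant $H$ satisfies \eqref{Hcond} with these parameters is precisely what is needed. Nothing is missing.
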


It is immediate to deduce some non-existence results from Corollary \ref{coro_pmc1W}.

We now give some applications of Theorem \ref{divX_1w}. It gives an extended version of Theorem 3.1 of \cite{acdn21}. Towards this aim we let $(\overline{M},\metric)$ be an oriented Riemannian manifold of dimension $m+1$ endowed with a Killing vector field $Y$. We let $\psi : M \to \overline{M}$ be an isometric immersion of a connected, orientable, complete non-compact Riemannian manifold $M$ of dimension $m$ into $\overline{M}$. We choose a globally defined unit normal vector field $\nu$ and let $v = \langle\nu,Y\rangle$. Let $\{\theta^a\}$, $\{\theta^a_b\}$, $1\leq a,b,\dots\leq m+1$ be a local orthonormal oriented Darboux coframe along $\psi$ with respective Levi-Civita connection forms. The dual frame is therefore given by $\{e_i\}$, $i=1,\dots,m$ tangent to $M$ and $e_{m+1}=\nu$. Then
\[
v = Y^{m+1}
\]
and
\[
\di v = \di Y^{m+1} = Y^{m+1}_i \theta^i - Y^t \theta^{m+1}_t = (Y^{m+1}_i - Y^t A_{ti}) \theta^i
\]
where $A = A_{ij} \theta^i\otimes\theta^j$ is the second fundamental form in the direction of $\nu$. Note that, since $Y$ is Killing, we have
\begin{equation}
v_i = - (Y^i_{m+1} + Y^t A_{ti}) \, .
\end{equation}
Set $X = -\nabla v$. We have
\begin{equation} \label{vXsign}
\langle\nabla v,X\rangle = - |X|^2 \leq 0 \, .
\end{equation}
From equation (2.8) in Proposition 2.2 of \cite{mmr15} (see also Proposition 1 in \cite{fr04}), using the fact that $Y$ is Killing we deduce
\begin{equation} \label{jac_eq}
\Delta v = -[|A|^2 + \overline{\Ricc}(\nu,\nu)] v - m \langle Y,\nabla H\rangle
\end{equation}
where $\overline{\Ricc}$ is the Ricci tensor of $\overline{M}$; we refer the reader to Proposition 6 in \cite{adr07} for an extension of \eqref{jac_eq} to the case where $Y$ is conformal Killing. Observe that, setting $Z$ for the vector field of components
\[
Z_i = Y^t A_{ti}
\]
an immediate computation gives
\[
|X|^2 = |\nabla_\nu Y|^2 + 2 \langle Z,\nabla_\nu Y\rangle + |Z|^2
\]
so that $X$ is bounded anytime $Y$, $\nabla_\nu Y$ and $A$ are bounded.

\begin{theorem} \label{ACD_gen}
	Let $(M,\metric)$ be a complete, non-compact manifold of dimension $m$ isometrically immersed into a manifold $(\overline{M},\metric)$ of dimension $m+1$ with constant mean curvature. Let $M$ and $\overline{M}$ be orientable and let $Y$ a bounded Killing vector field on $\overline{M}$. Assume that there exists $\alpha>0$ such that
		\begin{equation} \label{alpha_b}
		\sup_M |Y| \leq \frac{1}{\alpha} \qquad \text{and} \qquad \overline{\Ricc}(\nu,\nu) > - \alpha \, .
		\end{equation} where $\nu$ is a chosen globally defined unit normal on $M$ and $\overline{\Ricc}$ is the Ricci tensor of $\overline{M}$. We let $A$ be the second fundamental form of $\psi$ in the direction of $\nu$. Let $v=\langle Y,\nu\rangle$ satisfy
	\begin{equation} \label{v_lb}
	v \geq \frac{1}{|A|^2 + \overline{\Ricc}(\nu,\nu) + \alpha} \qquad \text{on } \, M
	\end{equation}
	and assume that the vector field
	\begin{equation}
	X = \overline{\nabla}_\nu Y + A(Y^\top,\;)^\sharp = \overline{\nabla}_\nu Y + Z
	\end{equation}
	has bounded length. If $c_0(M) = 0$ then $|A|^2 = -\overline{\Ricc}(\nu,\nu)$. In particular, if $\overline{\Ricc}(\nu,\nu) \geq 0$ then $\psi(M)$ is totally geodesic.
\end{theorem}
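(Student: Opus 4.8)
The plan is to derive the conclusion from a single application of Theorem \ref{divX_1w}, the point being to pick the right test function so that the sign hypothesis $\langle\nabla\psi,X\rangle\ge0$ required there is met.

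First I would record the identities already isolated before the statement. Since $Y$ is Killing and the immersion has constant mean curvature (so $\nabla H\equiv0$), equation \eqref{jac_eq} becomes $\Delta v=-[|A|^2+\overline{\Ricc}(\nu,\nu)]v$; equivalently, for the bounded $\mathcal C^1$ field $X=-\nabla v=\overline{\nabla}_\nu Y+Z$,
\[
\div X=-\Delta v=\big(|A|^2+\overline{\Ricc}(\nu,\nu)\big)v\qquad\text{on }M.
\]
Fix $k>0$ with $|X|\le k$ on $M$. By \eqref{vXsign} we have $\langle\nabla v,X\rangle=-|X|^2\le0$, and by \eqref{alpha_b} together with $|\nu|=1$ we get the pointwise bound $v=\langle Y,\nu\rangle\le\sup_M|Y|\le\alpha^{-1}$.

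Next I would set $\psi=\alpha^{-1}-v\in\mathcal C^\infty(M)$ and verify the hypotheses of Theorem \ref{divX_1w} with $\Omega=M$, $b\equiv\alpha$ (so \eqref{condB} holds with $\mu=0$, $\beta=\alpha$) and $h\equiv0$ (so $w\equiv1$ and $c^w_0(M)=c_0(M)$). The sign condition is immediate: $\langle\nabla\psi,X\rangle=-\langle\nabla v,X\rangle=|X|^2\ge0$. For the differential inequality, note that $\overline{\Ricc}(\nu,\nu)>-\alpha$ makes $|A|^2+\overline{\Ricc}(\nu,\nu)+\alpha$ strictly positive, so multiplying \eqref{v_lb} by it yields $v\big(|A|^2+\overline{\Ricc}(\nu,\nu)\big)\ge1-\alpha v=\alpha\psi$; together with the formula for $\div X$ above this reads $\div X\ge\alpha\psi=b(x)\psi$ on $M$. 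Since $M$ is non-compact, $\partial M=\emptyset$, and Theorem \ref{divX_1w} gives $\sup_M\psi\le\frac k\alpha c_0(M)$.

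Now invoking $c_0(M)=0$ we get $\psi\le0$, i.e.\ $v\ge\alpha^{-1}$ on $M$; combined with $v\le\alpha^{-1}$ this forces $v\equiv\alpha^{-1}$. Then $\nabla v\equiv0$, hence $\Delta v\equiv0$, and feeding this back into $\Delta v=-[|A|^2+\overline{\Ricc}(\nu,\nu)]v$ with $v=\alpha^{-1}\ne0$ gives $|A|^2=-\overline{\Ricc}(\nu,\nu)$; if moreover $\overline{\Ricc}(\nu,\nu)\ge0$ then $|A|^2\le0$, so $A\equiv0$ and $\psi(M)$ is totally geodesic. The only genuinely delicate point is the sign bookkeeping: \eqref{vXsign} has the ``wrong'' sign for Theorem \ref{divX_1w}, which is why one tests against $\psi=\alpha^{-1}-v$ rather than against $v$, and it is exactly the lower bound \eqref{v_lb}, rewritten using $\overline{\Ricc}(\nu,\nu)>-\alpha$, that produces the needed inequality $\div X\ge\alpha\psi$. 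Note also that the bound $k$ on $|X|$ is irrelevant to the conclusion, since it only multiplies $c_0(M)=0$.
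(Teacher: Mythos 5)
Your proposal is correct and follows essentially the same route as the paper: derive $\Delta v=-[|A|^2+\overline{\Ricc}(\nu,\nu)]v$ from \eqref{jac_eq} using constancy of $H$, test with $u=\alpha^{-1}-v$ (your $\psi$), use \eqref{vXsign} for the sign condition and \eqref{v_lb} with $\overline{\Ricc}(\nu,\nu)>-\alpha$ to get $\div X\ge\alpha\psi$, then apply Theorem \ref{divX_1w} with $\mu=0$, $\beta=\alpha$, $w\equiv1$ and conclude from $c_0(M)=0$. The only cosmetic difference is that you deduce $v\equiv\alpha^{-1}$ from the two-sided bound and then feed $\Delta v\equiv0$ back into the Jacobi-type equation, whereas the paper substitutes $v\equiv1/\alpha$ directly; these are the same step.
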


\begin{proof}
	Since $H = \frac{1}{m} \mathrm{tr}A$ is constant, \eqref{jac_eq} yields
	\begin{equation} \label{jac_0}
	\Delta v = -[|A|^2 + \overline{\Ricc}(\nu,\nu)] v \qquad \text{on } \, M \, ,
	\end{equation}
	where $v = \langle\nu,Y\rangle\leq |Y| \leq \frac{1}{\alpha}$ since $v^2 = |Y|^2 - |Y^\top|^2 \leq |Y|^2$. Define $u=\frac{1}{\alpha}-v \geq 0$ on $M$ and observe that, because of \eqref{vXsign}
	\begin{equation}
	\langle\nabla u,X\rangle \geq 0 \qquad \text{on } \, M \, .
	\end{equation}
	From \eqref{alpha_b}, \eqref{v_lb} and \eqref{jac_0} we get
	\[
	\div X = - \Delta v = [|A|^2 + \overline{\Ricc}(\nu,\nu) + \alpha] v - \alpha v \geq 1 - \alpha v = \alpha u \, .
	\]
	Then we apply Theorem \ref{divX_1w} with the choices $\Omega = M$, $h=0$ or, equivalently, $w=1$, $\mu=0$, $\beta=\alpha>0$, $k=1$ and $f(t)=t$ to obtain that
	\[
	u \leq \frac{c_0(M)}{\alpha} \, .
	\]
	If $c_0(M)=0$ then $u\equiv0$, hence $v\equiv\frac{1}{\alpha} > 0$ and substituting into \eqref{jac_0} we get
	\[
	|A|^2 + \overline{\Ricc}(\nu,\nu) \equiv 0
	\]
	and if further $\overline{\Ricc}(\nu,\nu) \geq 0$ then $A\equiv 0$, so that $\psi$ is totally geodesic.
\end{proof}

The next result is in the same vein. Let $\psi:M\to\R^{m+1}$ be an oriented, $m$-dimensional mean curvature flow soliton with respect to the position vector field $Z$ of $\R^{m+1}$ with soliton constant $c\in\R$. This means that the isometric immersion $\psi$ satisfies the equation
\begin{equation} \label{Hsol}
m\mathbf{H} = c Z_\psi^\bot = c \psi^\bot
\end{equation}
where $\mathbf{H}$ is the mean curvature vector of the immersion and $Z_\psi^\bot = \psi^\bot$ is the normal part of the vector field $Z$ along $\psi$. We let
\begin{equation}
\nu = \nu_\psi : M \to \Sph^m \subseteq \R^{m+1}
\end{equation}
be the spherical Gauss map of the immersion and $B$ a Killing vector field on $\R^{m+1}$ and define
\[
\eta(x) = \frac{1}{2}|\psi|^2(x) \, .
\]

The following computation can be found in Proposition 1 of \cite{cmr20}.

\begin{lemma} \label{lem_B}
	In the above setting, let $v=\langle B,\nu\rangle$. Then $v$ satisfies the equation
	\begin{equation} \label{jac_B}
	\Delta_{-cZ^\top} v + |A|^2 v = - \langle[B,cZ],\nu\rangle + cv
	\end{equation}
	where $A$ is the second fundamental form in the direction of $\nu$, $\Delta_{-cZ^\top}$ is the operator $\Delta + \langle cZ^\top,\nabla\;\rangle$ and $[\;,\;]$ the Lie bracket.
\end{lemma}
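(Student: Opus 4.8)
The plan is to obtain \eqref{jac_B} as a specialization of the classical Jacobi-type identity satisfied by the support-type function $v=\langle B,\nu\rangle$ of a hypersurface of Euclidean space, once the soliton equation \eqref{Hsol} is inserted.

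First I would use that every Killing vector field of $\R^{m+1}$ is affine: $B = B_0 + S\psi$ with $B_0\in\R^{m+1}$ constant and $S$ a constant skew-symmetric endomorphism. Since the ambient connection is flat, this gives $\overline\nabla_X B = SX$ for every vector field $X$, with $\langle SX,W\rangle = -\langle X,SW\rangle$. Working in a local orthonormal frame $\{e_i\}$ on $M$, geodesic at the point under consideration, and using the Weingarten relation $\overline\nabla_{e_i}\nu = -A_{ij}e_j$ together with the Gauss formula, a direct computation of the tangential gradient gives
\[
	v_i = -\langle e_i,S\nu\rangle - A_{ij}\langle B,e_j\rangle \, .
\]
Differentiating once more and invoking the Codazzi equation (which in $\R^{m+1}$ makes $\nabla A$ totally symmetric, so that $\mathrm{div}\,A = \nabla(\mathrm{tr}\,A)$), all the ``rotational'' contributions coming from the skew part $S$ of $B$ that appear at second order pair a symmetric tensor ($A$, or $A^2$) with a skew one and drop out, and one is left with
\[
	\Delta v + |A|^2 v = -\langle B,\nabla(\mathrm{tr}\,A)\rangle \, .
\]

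Next I would feed in the soliton equation. From $m\mathbf H = c\,\psi^\bot$ one gets $\mathrm{tr}\,A = c\langle\psi,\nu\rangle$, hence, using $\overline\nabla_{e_i}\psi = e_i$, $\nabla(\mathrm{tr}\,A) = c\,\nabla\langle\psi,\nu\rangle = -c\,A(\psi^\top)$. Substituting yields $\Delta v + |A|^2 v = c\,\langle A(\psi^\top),B\rangle$. I would then add the drift term $\langle cZ^\top,\nabla v\rangle$ (recall $Z=\psi$, so $Z^\top = \psi^\top$) and use the expression for $v_i$ above: the two summands quadratic in $A$ cancel by symmetry of $A$, leaving
\[
	\Delta_{-cZ^\top} v + |A|^2 v = -c\,\langle Z^\top,S\nu\rangle \, .
\]

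Finally, it remains to recognize the right-hand side as $-\langle[B,cZ],\nu\rangle + cv$. Since $Z=\psi$ is the position field, $\overline\nabla_X Z = X$, whence $[B,Z] = \overline\nabla_B Z - \overline\nabla_Z B = B - SZ$; therefore $\langle[B,cZ],\nu\rangle = cv - c\langle SZ,\nu\rangle = cv + c\langle Z,S\nu\rangle$, and since $\langle Z^\bot,S\nu\rangle = \langle Z,\nu\rangle\langle\nu,S\nu\rangle = 0$ one may replace $Z$ by $Z^\top$ in the last term, obtaining exactly \eqref{jac_B}. The main obstacle I foresee is pure bookkeeping: pinning down the sign conventions for the Weingarten operator, for the mean curvature vector in the soliton equation, and for the drift Laplacian $\Delta_{-cZ^\top}$ so that every term lands with the correct sign, and applying the symmetric/skew-symmetric cancellations consistently throughout.
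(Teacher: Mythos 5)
Your computation is correct: I checked the gradient formula $v_i=-\langle e_i,S\nu\rangle-A_{ij}\langle B,e_j\rangle$, the flat-ambient Jacobi identity $\Delta v+|A|^2v=-\langle B,\nabla(\operatorname{tr}A)\rangle$ (which is consistent with the paper's equation \eqref{jac_eq} when $\overline{\Ricc}=0$ and $mH=\operatorname{tr}A$), the consequence $\nabla(\operatorname{tr}A)=-cA(\psi^\top)$ of the soliton equation $\operatorname{tr}A=c\langle\psi,\nu\rangle$, the cancellation after adding the drift $\langle cZ^\top,\nabla v\rangle$ (note the two cancelling terms are \emph{linear} in $A$, not quadratic as you wrote, but the cancellation by symmetry of $A$ is exactly as you claim), and the identification $-\langle[B,cZ],\nu\rangle+cv=-c\langle Z^\top,S\nu\rangle$ via $[B,Z]=B-SZ$ and $\langle Z^\perp,S\nu\rangle=0$. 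The route, however, differs from the paper's: the paper gives no internal proof of Lemma \ref{lem_B} and simply cites Proposition 1 of \cite{cmr20}, where the analogous Jacobi-type equation is derived in the more general framework of mean curvature flow solitons and Killing fields in warped-product ambient spaces, the Euclidean case being a specialization. Your argument instead is a self-contained, elementary verification that exploits the affine structure $B=B_0+S\psi$ of Euclidean Killing fields and the flatness of the ambient connection; what you lose in generality (it does not cover the warped-product setting of the cited reference) you gain in transparency, since every term, including the bracket term $-\langle[B,cZ],\nu\rangle+cv$, is accounted for by a short frame computation, and it makes visible why that term vanishes in the application of Theorem \ref{thm_soliton}, where $B$ is parallel ($S=0$, $[B,cZ]=cB$).
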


\begin{theorem} \label{thm_soliton}
	Let $\psi:M\to\R^{m+1}$ be an oriented, complete, non-compact, $m$-dimensional mean curvature flow soliton with respect to the position vector field $Z$ of $\R^{m+1}$ and with soliton constant $c\in\R$. Assume that $c_0^{w}(M) = 0$, where $w=e^{c\eta}$, that $|A|$ is bounded and that the spherical Gauss map $\nu:M\to\Sph^m$ of the immersion satisfies
	\begin{equation} \label{vA_bound}
	v = \langle b,\nu\rangle \geq \frac{1}{1+|A|^2}
	\end{equation}
	for some fixed vector $b\in\Sph^m$. Denote by $B$ the parallel extension of $b$ to all of $\R^{m+1}$. Then $\psi(M)$ is a hyperplane orthogonal to $B$, and if $c\neq 0$ we have $0_{\R^{m+1}} \in \psi(M)$.
\end{theorem}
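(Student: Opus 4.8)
The plan is to follow the strategy of the proof of Theorem~\ref{ACD_gen}: turn the Jacobi-type equation \eqref{jac_B} into a first-order differential inequality $\div_h X \geq b(x)\phi$ with $\langle\nabla\phi,X\rangle \geq 0$ and $X$ bounded, and then invoke Theorem~\ref{divX_1w}. First I would simplify \eqref{jac_B}. Since $B$ is the parallel (constant) extension of $b$ and $Z$ is the position vector field of $\RR^{m+1}$, one computes $[B,cZ] = cB$, so that the right-hand side of \eqref{jac_B} becomes $-\langle cB,\nu\rangle + cv = -cv + cv = 0$; hence $v$ satisfies $\Delta_{-cZ^\top}v + |A|^2 v = 0$ on $M$.

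Next, recalling that $\eta = \frac{1}{2}|\psi|^2$ satisfies $\nabla\eta = Z^\top$ along the immersion, I would set $h = -c\eta$, so that $\Delta_{-cZ^\top} = \Delta - \langle\nabla h,\nabla\,\rangle$ is exactly the drift Laplacian associated to the weight $w = e^{-h} = e^{c\eta}$ of the statement, and the equation reads $\Delta_h v = -|A|^2 v$. Putting $X = -\nabla v$, which is $\mathcal C^1$, we get $\div_h X = -\div_h(\nabla v) = -\Delta_h v = |A|^2 v$; moreover $\nabla v$ is minus the image of the tangential part $b^\top$ under the shape operator, so $|X| = |\nabla v| \leq |A|\,|b^\top| \leq |A|$, which is bounded by hypothesis. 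Since $v = \langle b,\nu\rangle \leq 1$, the function $\phi := 1 - v$ is nonnegative, smooth and bounded, it satisfies $\langle\nabla\phi,X\rangle = |\nabla v|^2 \geq 0$, and, using $(1+|A|^2)v \geq 1$ from \eqref{vA_bound},
\[
\div_h X = |A|^2 v = (1+|A|^2)v - v \geq 1 - v = \phi \qquad \text{on } M .
\]

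Now I would apply Theorem~\ref{divX_1w} with $\Omega = M$ (so $\partial\Omega = \emptyset$), $b(x) \equiv 1$, $\mu = 0$, $\beta = 1$, coercivity constant $k$ an upper bound for $|X|$, and weight $w = e^{c\eta}$: it yields $\sup_M\phi \leq k\,c_0^w(M) = 0$, since $c_0^w(M) = 0$ by hypothesis. Hence $v \geq 1$ on $M$, and together with $v \leq 1$ this forces $v \equiv 1$. Then $\langle b,\nu\rangle \equiv 1$ with $|b| = |\nu| \equiv 1$ gives $\nu \equiv b$, so $A \equiv 0$, $\langle\psi,b\rangle$ is constant on the connected manifold $M$, and $\psi(M)$ lies in an affine hyperplane orthogonal to $B$; by completeness the local isometry $\psi : M \to \RR^m$ (identifying that hyperplane with $\RR^m$ after translation) is a covering, hence a diffeomorphism, so $\psi(M)$ is precisely that hyperplane. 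If moreover $c \neq 0$, then $A \equiv 0$ forces $\mathbf{H} \equiv 0$, so \eqref{Hsol} gives $\psi^\bot \equiv 0$; but $\psi^\bot = \langle\psi,\nu\rangle\nu = \langle\psi,b\rangle b$, whence $\langle\psi,b\rangle \equiv 0$, i.e.\ $\psi(M) = B^\perp$ and in particular $0_{\RR^{m+1}} \in \psi(M)$.

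The main obstacle is the initial bookkeeping: verifying the bracket identity $[B,cZ] = cB$ that makes the right-hand side of \eqref{jac_B} cancel, and matching the drift term $\langle cZ^\top,\nabla\,\rangle$ with exactly the weight $w = e^{c\eta}$ via $\nabla\eta = Z^\top$, with the correct sign of $h$. Once this is settled, the boundedness estimate $|X| \leq |A|$ and the inequality chain feeding Theorem~\ref{divX_1w} are routine, and the final geometric identification is standard.
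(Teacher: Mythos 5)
Your proposal is correct and follows essentially the same route as the paper: reduce \eqref{jac_B} via $[B,Z]=B$ to $\Delta_{-c\eta}v=-|A|^2 v$, set $X=-\nabla v=AB^\top$ and $u=1-v\geq 0$, check $|X|\leq\sup_M|A|$, $\langle\nabla u,X\rangle=|\nabla v|^2\geq0$ and $\div_{-c\eta}X\geq u$ from \eqref{vA_bound}, then apply Theorem \ref{divX_1w} with $w=e^{c\eta}$ to get $v\equiv1$, hence $\nu\equiv B$, and conclude via \eqref{Hsol} when $c\neq0$. The only (harmless) difference is that you spell out the final identification of $\psi(M)$ with the full hyperplane via a covering argument, which the paper leaves implicit.
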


\begin{remark}
	\emph{Note that \eqref{vA_bound} implies that the image $\nu(M)$ of the spherical Gauss map of the immersion is contained in the compact subset
	\[
	\left\{y\in\Sph^m : \dist(b,y) \leq \cos^{-1}\left(\frac{1}{1+\sup_M |A|^2}\right) \right\}
	\]
	of the open hemisphere of $\Sph^m$ centered at $b$.}
\end{remark}

\begin{proof}
	First observe that we can apply Lemma \ref{lem_B} since $B$ is parallel, hence Killing, on $\R^{m+1}$. In this case we have $[B,Z] = B$, as it can be seen by fixing cartesian coordinates $(x^i)$ on $\R^{m+1}$ such that $B=\partial_{x^1}$ and computing
	\[
	[B,Z] = \left[\partial_{x^1},\sum_{i=1}^{m+1} x^i \partial_{x^i}\right] = \sum_{i=1}^{m+1} \frac{\partial x^i}{\partial x^1} \partial_{x^i} = \partial_{x^1} = B \, ,
	\]
	so that formula \eqref{jac_B} reduces to
	\begin{equation} \label{jac_1}
	\Delta_{-cZ^\top} v + |A|^2 v = 0 \, .
	\end{equation}
	Furthermore, setting $\bar\eta(x) = \frac{1}{2}|x|^2$ for $x\in\R^{m+1}$ we have $Z = \nabla\bar\eta$ in $\R^{m+1}$, and since $\eta=\bar\eta\circ\psi$ it follows that $Z^\top = \nabla\eta$. Hence $\Delta_{-cZ^\top}$ is in fact a weighted Laplacian
	\[
	\Delta_{-cZ^\top} = \Delta_{-c\eta} = \Delta + c\langle\nabla\eta,\;\rangle
	\]
	and using \eqref{jac_1} we get
	\[
	\Delta_{-c\eta} v = -|A|^2 v \, .
	\]
	As in Theorem \ref{ACD_gen}, and since $B$ is parallel, we have $\nabla v = -AB^\top$. Setting $X = AB^\top$, we can further restate the above relation as
	\begin{equation}
	\div_{-c\eta} X = |A|^2 v \, .
	\end{equation}
	Thus for $u=1-v\geq0$ (note that $v\leq|B|=1$) we have, using \eqref{vA_bound}
	\begin{equation}
	\div_{-c\eta} X \geq u \, .
	\end{equation}
	We note that $X$ is bounded since and $|B^\top|\leq|B|=1$. Also, $\langle\nabla u,X\rangle = -\langle\nabla v,X\rangle = |X|^2 \geq 0$. We can then apply Theorem \ref{divX_1w} as in the proof of Theorem \ref{ACD_gen} to deduce
	\[
	u \leq c_0^{w}(M) = 0
	\]
	and since $u\geq 0$ we conclude $u\equiv 0$, that is, $v\equiv 1$. Since $0<v=\langle B,\nu\rangle=|B^\bot|\leq|B|\equiv 1$, we must have $B^\bot=B$ along $\psi(M)$, that is, $\nu=B$. So $\psi(M)\subseteq\R^{m+1}$ is a hyperplane perpendicular to $B$. Moreover, if $c\neq 0$ then by \eqref{Hsol} it must be $Z^\bot_\psi \equiv \psi^\bot \equiv 0$, so that $\psi(M)$ is a hyperplane passing through the origin of $\R^{m+1}$.
\end{proof}

We observe that the condition $c_0^{w}(M) = 0$, with $w=e^{c\eta}$, means
\[
\liminf_{R\to+\infty} \frac{1}{R} \log\int_{B_R} e^{\frac{c}{2}|x|^2} = 0 \, .
\]
Note that when $c<0$ the above condition allows a growth of the volume of geodesic balls which is faster than polynomial. Thus the assumptions of the above theorem do not imply those of Theorem 3.1 of \cite{dxy16}. See also Theorem 2.2 of \cite{cmr20}. 

\section{Comparison for strictly monotone $1$-coercive operators}

Let $(M,\metric)$ be a complete Riemannian manifold. Let $A : \RR\times TM \to TM$ be a weakly-$1$-coercive generalized bundle map and let $L$ be the differential operator weakly defined by
$$
Lu(x) = \div A(x,u,\nabla u) \, .
$$
In this section we assume that $A$ is also \emph{strictly monotone}, that is, that
\begin{equation} \label{str_mon_1}
\langle A(x,t,\xi) - A(x,s,\eta), \xi-\eta \rangle \geq 0
\end{equation}
for every $x\in M$, $\xi,\eta\in T_x M$ and $t,s\in\RR$ with $t\geq s$, and we require strict inequality in case $t>s$ and $\xi\neq\eta$.

As an example, let
\[
	A(x,s,\xi) = \frac{\xi}{\sqrt{1+|\xi|^2}} \, .
\]
Then, by the M\={i}kljukov inequality in (see \cite{mik79}, page 165)
\begin{align*}
	\langle A(x,t,\xi) - A(x,s,\eta), \xi-\eta \rangle & = \left\langle \frac{\xi}{\sqrt{1+|\xi|^2}} - \frac{\eta}{\sqrt{1+|\eta|^2}}, \xi-\eta \right\rangle \\
	& \geq \frac{1}{2} [\sqrt{1+|\xi|^2} + \sqrt{1+|\eta|^2}] \left| \frac{\xi}{\sqrt{1+|\xi|^2}} - \frac{\eta}{\sqrt{1+|\eta|^2}} \right|^2 \, .
\end{align*}
Hence, \eqref{str_mon_1} holds, and if $\xi\neq\eta$ the inequality is clearly strict. Thus, in particular our subsequent results can be applied to the mean curvature operator.

\begin{theorem} \label{1_comp}
	Let $(M,\metric)$ be a complete manifold, $A : \RR \times TM \to TM$ a $\mathcal{C}^1$ strictly monotone weakly-$1$-coercive generalized bundle map with coercivity constant $k>0$ and let $L$ be the corresponding weakly-$1$-coercive operator.
	
	Let $b\in\mathcal C(M)$ be positive and satisfy condition \eqref{condB} for two given parameters $\mu\in[0,1]$ and $\beta>0$, and let $f\in\mathcal C(\R)$ be such that
	\begin{equation} \label{fast}
		f(s) - f(t) \geq \alpha (s-t) \qquad \forall \, s,t\in\R \, \text{ with } \, s > t
	\end{equation}
	for some constant $\alpha>0$. If $u,v\in\mathcal C^2(M)$ satisfy
	\begin{equation}
		Lu \geq b(x) f(u) \, , \qquad Lv \leq b(x) f(v) \qquad \text{in } \, M
	\end{equation}
	then
	\begin{equation} \label{cmu_comp0}
	u \leq v + 2\frac{k}{\alpha\beta} c_\mu(M) \qquad \text{in } \, M \, .
	\end{equation}
\end{theorem}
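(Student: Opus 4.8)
The plan is to deduce the comparison inequality \eqref{cmu_comp0} from the first-order maximum principle of Theorem \ref{divX_1w}, applied to the vector field obtained as the difference of the two evaluations of $A$, on the open set where $u$ strictly exceeds $v$.

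First I would set $\Omega = \{x\in M : u(x) > v(x)\}$; if $\Omega=\emptyset$ then $u\le v$ and, since $c_\mu(M)\ge 0$, \eqref{cmu_comp0} is trivial, so I may assume $\Omega\neq\emptyset$, and it is open by continuity of $u-v$. Put
\[
X = A(x,u,\nabla u) - A(x,v,\nabla v), \qquad \psi = u-v .
\]
Because $A$ is $\mathcal C^1$ and $u,v\in\mathcal C^2(M)$, $X$ is a $\mathcal C^1$ vector field on $M$, and by the coercivity bound \eqref{wc3} one has $|X|\le |A(x,u,\nabla u)|+|A(x,v,\nabla v)|\le 2k$ on $M$; moreover $\psi\in\mathcal C^2(M)\subseteq\mathcal C^1(\Omega)\cap\mathcal C(\overline\Omega)$. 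The key point is that the two structural hypotheses of Theorem \ref{divX_1w} hold on $\Omega$. On the one hand, for $x\in\Omega$ we have $u(x)>v(x)$, so strict monotonicity \eqref{str_mon_1} (even in its non-strict form) yields
\[
\langle\nabla\psi,X\rangle = \langle \nabla u-\nabla v,\ A(x,u,\nabla u)-A(x,v,\nabla v)\rangle \ge 0 \qquad \text{on } \Omega .
\]
On the other hand, since $\div X = Lu - Lv \ge b(x)[f(u)-f(v)]$ on all of $M$, the structural assumption \eqref{fast} applied with $s=u(x)>v(x)=t$ gives
\[
\div X \ge b(x)[f(u)-f(v)] \ge \alpha\, b(x)\,\psi = \tilde b(x)\,\psi \qquad \text{on } \Omega ,
\]
where $\tilde b:=\alpha b\in\mathcal C(M)$ is positive and, from the corresponding property of $b$ relative to $\beta$, satisfies condition \eqref{condB} for the parameters $\mu$ and $\alpha\beta$.

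At this point I would invoke Theorem \ref{divX_1w} with the choices $\Omega$ as above, $h\equiv 0$ (so $w\equiv 1$ and $c^w_\mu(\Omega)=c_\mu(\Omega)$), the function $\tilde b$ with parameters $\mu$ and $\alpha\beta$, and coercivity constant $2k$. Its conclusion reads
\[
\sup_\Omega \psi \ \le\ \max\Bigl\{\tfrac{2k}{\alpha\beta}\,c_\mu(\Omega),\ \sup_{\partial\Omega}\psi\Bigr\}.
\]
If $\Omega=M$ then $\partial\Omega=\emptyset$ and the bound reads $\sup_M\psi\le\frac{2k}{\alpha\beta}c_\mu(M)$; if $\Omega\subsetneqq M$ then $u=v$, hence $\psi=0$, on $\partial\Omega$ by continuity, and using $c_\mu(\Omega)\le c_\mu(M)$ together with $c_\mu(M)\ge 0$ one again obtains $\sup_\Omega\psi\le\frac{2k}{\alpha\beta}c_\mu(M)$. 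Since $\psi=u-v\le 0\le\frac{2k}{\alpha\beta}c_\mu(M)$ on $M\setminus\Omega$, this is exactly \eqref{cmu_comp0}.

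The argument is thus a rather direct reduction, and there is no genuine analytic obstacle beyond Theorem \ref{divX_1w} itself. The step that needs the most care is verifying that the sign condition $\langle\nabla\psi,X\rangle\ge 0$ and the differential inequality $\div X\ge\tilde b\,\psi$ hold \emph{precisely} on the superlevel set $\{u>v\}$ — this is exactly where strict monotonicity of $A$ and the one-sided Lipschitz lower bound \eqref{fast} on $f$ enter — together with the bookkeeping of the two constants: the factor $2$ in \eqref{cmu_comp0} comes from $|X|\le 2k$ (triangle inequality on the two values of $A$), while the factor $\alpha^{-1}$ comes from absorbing $\alpha$ into $b$ when passing from $f(u)-f(v)$ to $\alpha(u-v)$.
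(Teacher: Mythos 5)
Your proof is correct, but it does not follow the route of the paper's own proof of Theorem \ref{1_comp}. The paper argues directly by contradiction: setting $c^\ast=\tfrac{2k}{\alpha\beta}c_\mu(M)$, it considers the superlevel set $\Omega_\eps=\{u-v>c^\ast+\eps\}$, flows a small ball $B_\delta(p)\subseteq\Omega_\eps$ along the bounded $\mathcal C^1$ field $X=A(x,u,\nabla u)-A(x,v,\nabla v)$ (strict monotonicity \eqref{str_mon_1} ensuring the flow preserves $\Omega_\eps$), and compares the resulting exponential-type lower bound on $\vol(U_t)$ with the definition of $c_\mu(M)$, exactly as in Theorem \ref{A_1w_base}. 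You instead reduce the statement to the first-order maximum principle of Theorem \ref{divX_1w} applied on $\Omega_+=\{u>v\}$ with $\psi=u-v$; all your verifications (the bound $|X|\le 2k$, the sign $\langle\nabla\psi,X\rangle\ge0$ from \eqref{str_mon_1} on $\{u>v\}$, the inequality $\div X\ge\alpha b\,\psi$ from \eqref{fast}, the vanishing of $\psi$ on $\partial\Omega_+$, and the monotonicity $c_\mu(\Omega_+)\le c_\mu(M)$) are sound, and absorbing $\alpha$ into $b$ so that condition \eqref{condB} holds with parameters $(\mu,\alpha\beta)$ is legitimate. In fact the authors themselves record precisely this alternative derivation in the remark following Theorem \ref{1_comp_loc}, with the same choices of $X$, $\psi$ and $\Omega_+$, so your argument is an acknowledged shortcut rather than a new idea. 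What the two approaches buy: your reduction is shorter, gives the local version (Theorem \ref{1_comp_loc}) for free, and transfers verbatim to the low-regularity setting through Theorem \ref{Xlessreg}; the paper's direct proof is self-contained, which matters here because Theorem \ref{divX_1w} is only stated, its proof being delegated to ``the same line of reasoning'' as Theorems \ref{A_1w_base}--\ref{A_1w_base_bisW}, so at bottom your route rests on the very flow-and-volume-growth argument that the paper's proof of Theorem \ref{1_comp} writes out explicitly (the only other caveat is the innocuous one that Theorem \ref{divX_1w} is stated for non-compact $M$, a standing assumption of the paper anyway).
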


\begin{proof}
	We let $w=u-v$ and we suppose by contradiction that \eqref{cmu_comp0} is false so that, having set $c^\ast = 2\frac{k}{\alpha\beta} c_\mu(M)$ we have $\{x\in M : w(x)>c^\ast\} \neq \emptyset$. Thus choose $\eps>0$ small enough so that the set
	$$
	\Omega_\eps = \{ x \in M : w(x) > c^\ast + \eps \} \neq \emptyset \, .
	$$
	$\Omega_\eps$ is open by continuity of $w$. For short write $A_u=A(x,u,\nabla u)$, $A_v=A(x,v,\nabla v)$ and consider the vector field $X : M \to TM$ given by
	$$
	X = A_u - A_v \, .
	$$
	From monotone $1$-coercivity \eqref{str_mon_1} of $A$ we have
	\begin{equation} \label{monX}
	\langle X,\nabla w \rangle \geq 0 \qquad \text{on } \, \{w\geq0\} \, .
	\end{equation}
	The assumptions on $A$, $u$ and $v$ ensure that $X$ is of class $\mathcal C^1$ and that
	\[
	|X| = |A_u-A_v| \leq |A_u| + |A_v| \leq 2k \quad \text{ on } M \, .
	\]
	We observe that $X$ has a well-defined flow
	\begin{equation} \label{flow5}
	\Psi : [0,+\infty) \times \Omega_\eps \to \Omega_\eps.
	\end{equation}
	Indeed, since $M$ is complete and $X$ is bounded on $M$, for any $x\in\Omega_\eps$ the initial value problem
	\begin{equation} \label{CpXbis}
	\begin{cases}
	\gamma(0) = x \\
	\dot\gamma(t) = X_{\gamma(t)}
	\end{cases}
	\end{equation}
	has a (unique) solution $\gamma : [0,+\infty) \to M$. For all $t\in[0,+\infty)$ we have
	\begin{equation} \label{wX}
		\frac{\di}{\di t} w(\gamma(t)) = \langle \nabla w, \dot\gamma(t) \rangle_{\gamma(t)} = \langle \nabla w, X_{\gamma(t)} \rangle_{\gamma(t)} \, .
	\end{equation}
	We show that $\gamma(t)\in\Omega_\eps$ for all $t\in[0,+\infty)$. Since $w(\gamma(0))>0$, we also have $w(\gamma(t))>0$ for all sufficiently small $t>0$. Let
	\[
	\tilde T_x = \sup\{T\in(0,+\infty] : w(\gamma(t))>0 \text{ for all } t \in [0,T) \} \, .
	\]
	We know that $0 < \tilde T_x \leq +\infty$. Suppose by contradiction that $\tilde T_x < +\infty$. Then it must be $w(\gamma(\tilde T_x)) \leq 0$. But then $\gamma(t) \in \{ w>0 \}$ for all $t\in[0,\tilde T_x)$, so by \eqref{wX} and \eqref{monX} we have
	\[
	\frac{\di}{\di t} w(\gamma(t)) \geq 0 \qquad \forall \, t \in [0,\tilde T_x)
	\]
	so that $[0,\tilde T_x) \ni t \mapsto w(\gamma(t))$ is non-decreasing and therefore by continuity
	\[
		w(\gamma(\tilde T_x)) = \lim_{t\to \tilde T_x} w(\gamma(t)) \geq w(\gamma(0)) > 0 \, ,
	\]
	contradiction. So $\tilde T_x = +\infty$ and $\gamma([0,+\infty)) \subseteq \{w>0\}$. We just observed that $[0,+\infty) \ni t \mapsto w(\gamma(t))$ is non-decreasing, so in fact $w(\gamma(t)) \geq w(\gamma(0)) = w(x) > c^\ast + \eps$ for every $t\in[0,+\infty)$. Thus $\gamma(t)$ lies in $\Omega_\eps$ for every $t\in[0,+\infty)$.
	
	Now fix $p\in\Omega_\eps$ and $\delta>0$ such that $B_\delta(p) \subseteq \Omega_\eps$ and $\partial B_\delta(p)$ is smooth. For every $t\in[0,+\infty)$ let
	\[
		U_t = \Psi(t,B_{\delta}(p)) \qquad \text{and} \qquad g(t) = \vol(U_t) \, .
	\]
	Using that $\mathcal L_X(\di v) = \div X \, \di v$, with $\mathcal L_X$ the Lie derivative with respect to $X$, we have
	\begin{equation} \label{g'_bis}
		\begin{split}
			g'(t) = \int_{U_t} \div X \, \di v = \int_{U_t} (Lu-Lv) \, \di v & \geq \int_{U_t} b(x) [f(u)-f(v)] \, \di v \\
			& = \int_{U_t} b(x) \frac{f(u)-f(v)}{u-v} (u-v) \, \di v \\
			& > (c^\ast+\eps) \alpha \int_{U_t} b(x) \, \di v \, ,
		\end{split}
	\end{equation}
	where have used that $u-v=w>c^\ast+\eps>0$ in $\Omega_\eps \supseteq U_t$ and that
	\[
		\frac{f(u)-f(v)}{u-v} \geq \alpha
	\]
	by \eqref{fast}. Reasoning as in the proof of Theorem \ref{A_1w_base} (see \eqref{inf_b_M}), fixing $\beta^\ast\in\RR$ such that
	\[
		\beta > \beta^\ast > \frac{c^\ast}{c^\ast+\eps}\beta
	\]
	there exists $R_0>0$ such that
	\begin{equation} \label{inf_b_bis}
		\inf_{B_R} b \geq \beta^\ast R^{-\mu} \qquad \forall \, R\geq R_0 \, .
	\end{equation}
	As a consequence of $|X|\leq 2k$ and of the triangle inequality, we have
	\begin{equation} \label{Ut_in_bis}
		U_t \subseteq B_{\delta+2kt}(p) \subseteq B_{r(p)+\delta+2kt} \qquad \forall \, t \geq 0 \, .
	\end{equation}
	Setting $R_1 = \max\{R_0,r(p)+\delta\}$, from \eqref{g'_bis}, \eqref{inf_b_bis} and \eqref{Ut_in_bis} we get
	\[
		g'(t) \geq (c^\ast+\eps) \alpha\beta^\ast (R_1+2kt)^{-\mu} g(t) \qquad \forall \, t \geq 0 \, .
	\]
	Integrating over $[0,t]$
	\begin{equation} \label{g'2_bis}
		\log (g(t)) \geq \log(g(0)) + (c^\ast+\eps) \alpha\beta^\ast \int_0^t (R_1+2ks)^{-\mu} \, \di s \, .
	\end{equation}
	Note that $U_t \subseteq B_{\delta+2kt}(p) \subseteq B_{R_1+2kt}$ and also $U_t\subseteq\Omega_\eps$, we have $g(t) \leq \vol(B_{R_1+2kt})$, so with a change of integration variable $y=R_1+2ks$ in \eqref{g'2_bis} we get
	\[
		\log \vol(B_R) \geq \log(g(0)) + \frac{1}{2} (c^\ast+\eps) \alpha\beta^\ast \int_{R_1}^R y^{-\mu} \, \di y \qquad \forall \, R\geq R_1
	\]
	and therefore
	\begin{alignat*}{2}
		\log\vol B_R(o) & \geq \log g(0) + (c^\ast+\eps) \frac{\alpha\beta^\ast}{2k} \frac{R^{1-\mu}-R_1^{1-\mu}}{1-\mu} & \qquad \text{if } \, 0 \leq \mu < 1 \\
		\log\vol B_R(o) & \geq \log g(0) + (c^\ast+\eps) \frac{\alpha\beta^\ast}{2k} (\log R-\log R_1) & \qquad \text{if } \, \mu = 1 \, .
	\end{alignat*}
	This yields
	\[
		c_\mu(M) \geq (c^\ast+\eps) \frac{\alpha\beta^\ast}{2k} > c^\ast \frac{\alpha\beta}{2k} = c_\mu(M) \, ,
	\]
	contradiction.
\end{proof}

Similarly, one proves the local version by adapting the proof of Theorem \ref{1_comp} in the same way as we proved Theorem \ref{A_1w_base_bis} adapting the proof of Theorem \ref{A_1w_base}.

\begin{theorem} \label{1_comp_loc}
	Let $(M,\metric)$ be complete, $\Omega\subseteq M$ a non-empty open set, $A : \RR \times TM|_\Omega \to TM|_\Omega$ a $\mathcal{C}^1$ strictly monotone weakly-$1$-coercive generalized bundle map with coercivity constant $k>0$ and let $L$ be the corresponding operator.
	
	Let $b\in\mathcal C(M)$ be positive and satisfy \eqref{condB} for some $\mu\in[0,1]$ and $\beta>0$, and let $f\in\mathcal C(\R)$ be such that
	\[
		f(s) - f(t) \geq \alpha (s-t) \qquad \forall \, s,t\in\R \, \text{ with } \, s > t
	\]
	for some constant $\alpha>0$. If $u,v\in\mathcal C(\overline{\Omega})\cap \mathcal C^2(\Omega)$ satisfy
	\[
		\begin{cases}
			Lu \geq b(x) f(u) & \quad \text{in } \, \Omega \\
			Lv \leq b(x) f(v) & \quad \text{in } \, \Omega 
		\end{cases}
	\]
	then
	\begin{equation} \label{cmu_comp}
		u \leq v + \max\left\{\frac{2k}{\alpha\beta} c_\mu(\Omega),\sup_{\partial\Omega}(u-v)\right\} \qquad \text{in } \, \Omega.
	\end{equation}
\end{theorem}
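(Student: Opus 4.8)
The plan is to follow verbatim the proof of Theorem \ref{1_comp}, incorporating exactly the modifications used to pass from Theorem \ref{A_1w_base} to its localized version Theorem \ref{A_1w_base_bis}. If $\sup_{\partial\Omega}(u-v)=+\infty$ the conclusion \eqref{cmu_comp} is trivial, so assume this supremum is finite, set $w=u-v\in\mathcal C(\overline{\Omega})\cap\mathcal C^2(\Omega)$ and $c^\ast=\max\{\frac{2k}{\alpha\beta}c_\mu(\Omega),\sup_{\partial\Omega}w\}$, noting $c^\ast\geq 0$ since $c_\mu(\Omega)\geq 0$. Arguing by contradiction, suppose \eqref{cmu_comp} fails; then for $\eps>0$ small enough the set $\Omega_\eps=\{x\in\Omega:w(x)>c^\ast+\eps\}$ is non-empty and, by continuity of $w$, open. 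Exactly as in Theorem \ref{A_1w_base_bis} one first checks the key inclusion $\overline{\Omega_\eps}\subseteq\Omega$: if $\tilde x=\lim x_j$ with $x_j\in\Omega_\eps$, then $w(\tilde x)\geq c^\ast+\eps>\sup_{\partial\Omega}w$, so $\tilde x\notin\partial\Omega$, hence $\tilde x\in\Omega$.

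Next I would introduce the vector field $X=A(x,u,\nabla u)-A(x,v,\nabla v)$ on $\Omega$, which is of class $\mathcal C^1$ with $|X|\leq 2k$ on $\Omega$, and which by strict monotonicity \eqref{str_mon_1} satisfies $\langle X,\nabla w\rangle\geq 0$ on $\{w\geq 0\}\cap\Omega$. The only genuinely new point, and the one I expect to be the main obstacle, is to show that $X$ still possesses a well-defined flow $\Psi:[0,+\infty)\times\Omega_\eps\to\Omega_\eps$ despite $X$ being defined only on the proper subset $\Omega$. For $x\in\Omega_\eps$, let $\gamma$ solve $\gamma(0)=x$, $\dot\gamma=X_\gamma$ on its maximal interval $[0,T_x)$ inside $\Omega$. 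As long as $\gamma(t)\in\{w>0\}$ one has $\frac{\di}{\di t}w(\gamma(t))=\langle\nabla w,X_{\gamma(t)}\rangle\geq 0$, so the same bootstrap as in Theorem \ref{1_comp} shows $w(\gamma(t))$ is nondecreasing and stays $>c^\ast+\eps$, i.e.~$\gamma$ remains in $\Omega_\eps$. If $T_x<+\infty$, then since $|X|\leq 2k$ the curve has length $\leq 2kT_x<+\infty$, so by completeness of $M$ it has a limit endpoint $\tilde x\in M$; from $\gamma([0,T_x))\subseteq\Omega_\eps$ and $\overline{\Omega_\eps}\subseteq\Omega$ we get $\tilde x\in\Omega$, hence $w(\tilde x)$ is defined and $w(\tilde x)\geq w(x)>c^\ast+\eps$, so $\tilde x\in\Omega_\eps$ and $\gamma$ extends past $T_x$, a contradiction. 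Thus $T_x=+\infty$ and $\Psi(t,\Omega_\eps)\subseteq\Omega_\eps$ for every $t\geq 0$.

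From this point the argument is a routine repetition of Theorem \ref{1_comp}. Fixing $p\in\Omega_\eps$ and $\delta>0$ with $B_\delta(p)\subseteq\Omega_\eps$ and $\partial B_\delta(p)$ smooth, put $U_t=\Psi(t,B_\delta(p))$ and $g(t)=\vol(U_t)$; then $g'(t)=\int_{U_t}\div X\,\di v=\int_{U_t}(Lu-Lv)\,\di v>(c^\ast+\eps)\alpha\int_{U_t}b(x)\,\di v$ using $w>c^\ast+\eps>0$ on $U_t$ together with \eqref{fast}. Choosing $\beta^\ast$ with $\beta>\beta^\ast>\frac{c^\ast}{c^\ast+\eps}\beta$, using $\inf_{B_R}b\geq\beta^\ast R^{-\mu}$ for large $R$ and $U_t\subseteq B_{\delta+2kt}(p)\subseteq B_{R_1+2kt}$, one obtains $g'(t)\geq(c^\ast+\eps)\alpha\beta^\ast(R_1+2kt)^{-\mu}g(t)$; integrating, and now crucially using $U_t\subseteq\Omega_\eps\subseteq\Omega$ so that $g(t)\leq\vol(B_{R_1+2kt}\cap\Omega)$, one reaches $c_\mu(\Omega)\geq(c^\ast+\eps)\frac{\alpha\beta^\ast}{2k}>c^\ast\frac{\alpha\beta}{2k}\geq c_\mu(\Omega)$, the desired contradiction. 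The heart of the matter is therefore the completeness of the flow of $X$ on $\Omega_\eps$, which rests entirely on the inclusion $\overline{\Omega_\eps}\subseteq\Omega$ and hence on $u,v\in\mathcal C(\overline{\Omega})$ and the strict inequality $w>c^\ast+\eps>\sup_{\partial\Omega}w$; the rest is bookkeeping identical to the earlier proofs.
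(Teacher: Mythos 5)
Your proposal is correct and is exactly the route the paper indicates: it adapts the proof of Theorem \ref{1_comp} by the same modifications used to pass from Theorem \ref{A_1w_base} to Theorem \ref{A_1w_base_bis}, with the key new point (the inclusion $\overline{\Omega_\eps}\subseteq\Omega$ guaranteeing completeness of the flow of $X=A(x,u,\nabla u)-A(x,v,\nabla v)$ on $\Omega_\eps$) handled just as in the paper, and the volume-growth contradiction carried out with $\vol(B_R\cap\Omega)$. (The paper also notes an alternative one-line deduction from Theorem \ref{divX_1w} with $\psi=u-v$, but your adaptation is the argument the authors themselves describe.)
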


\begin{remark}
	\emph{Theorem \ref{1_comp_loc} generalizes a comparison principle for the capillarity equation in the Euclidean space $M = \RR^m$ first proved (in case $\Omega\subseteq\RR^m$ is bounded) by Concus and Finn, \cite{cf74b}, and later extended (to possibly unbounded $\Omega$) in Finn and Hwang, \cite{fh89}. The case of the capillarity equation corresponds to $A(\xi) = (1+|\xi|^2)^{-1/2}\xi$, $f(t) = \alpha t$, $b(x)\equiv 1$. Note that in this setting one can take $\mu=0$, and since $c_0(\Omega)=0$ for any $\Omega\subseteq\RR^m$ the conclusion \eqref{cmu_comp} reduces to
	\[
		\sup_\Omega(u-v) \leq \sup_{\partial\Omega}(u-v)_+ \, .
	\]
	See also \cite{prs02} Theorems 1.3, 1.4 and 1.5 for further comparison results in the case where $M$ is a general Riemannian manifold.}
\end{remark}

\begin{remark}
	\emph{Theorems \ref{1_comp} and \ref{1_comp_loc} can be also deduced directly as applications of Theorem \ref{divX_1w} with the choices $X = A(x,u,\nabla u) - A(x,v,\nabla v)$ and $\psi = w = u-v$. Indeed, we already observed in the proof of Theorem \ref{1_comp} that $|X|\leq 2k$ and $\langle\nabla\psi,X\rangle\geq0$ due to weak coercivity and strict monotonicity of $A$, and by the assumptions on $f$ we have
	\[
	\div X = Lu - Lv \geq b(x)[f(u)-f(v)] \geq \alpha b(x) (u-v) = \alpha b(x) \psi \qquad \text{on } \, \Omega_+
	\]
	where $\Omega_+ = \{x \in \Omega : u(x) > v(x)\} \equiv \{x\in\Omega : \psi(x)>0\}$. Since $c_\mu(\Omega) \geq 0$, if $\Omega_+ = \emptyset$ then $u\leq v$ everywhere in $\Omega$ and \eqref{cmu_comp} immediately follows; otherwise, $\sup_\Omega \psi = \sup_{\Omega_+}\psi > 0$ and from Theorem \ref{divX_1w} we obtain
	\[
	\sup_\Omega \psi \leq \sup_{\Omega_+} \psi \leq \max\left\{\frac{2k}{\alpha\beta} c_\mu(\Omega_+),\sup_{\partial\Omega_+}\psi\right\} .
	\]
	Since $c_\mu(\Omega_+) \leq c_\mu(\Omega)$ and $\partial\Omega_+ \subseteq (\partial\Omega) \cup \{u=v\}$, we further obtain
	\[
	\sup_\Omega \psi \leq \max\left\{\frac{2k}{\alpha\beta} c_\mu(\Omega),\sup_{\partial\Omega}\psi\right\} ,
	\]
	that is, \eqref{cmu_comp}.}
\end{remark}

Applying Theorem \ref{1_comp_loc} to the capillarity equation
\begin{equation} \label{capillOm}
	\div\left(\frac{\nabla u}{\sqrt{1+|\nabla u|^2}}\right) = b(x) u \qquad \text{on } \, \Omega \subseteq M
\end{equation}
we have

\begin{theorem} \label{capi_thm}
	Let $(M,\metric)$ be complete and $\Omega\subsetneqq M$ a non-empty open set such that $c_\mu(\Omega)=0$ for a given $\mu\in[0,1]$. Let $0<b\in C(M)$ satisfy
	$$
		\left\{
		\begin{array}{l@{\quad}l@{\qquad}l}
			b(x) \geq \beta & \forall \, x \in M & \text{if } \, \mu = 0 \\[0.3cm]
			b(x) \geq \beta r(x)^{-\mu} & \text{for } \, r(x) \gg 1 & \text{if } \, 0 < \mu \leq 1
		\end{array}
	\right.
	$$
	for some $\beta>0$.
	If $u,v$ are solutions of equation \eqref{capillOm} in $\Omega$ such that $u = v$ on $\partial\Omega$, then $u\equiv v$ in $\Omega$.
\end{theorem}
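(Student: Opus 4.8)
The plan is to obtain Theorem~\ref{capi_thm} as a direct application of the comparison principle in Theorem~\ref{1_comp_loc}. First I would identify equation \eqref{capillOm} as being of the form $Lu = b(x)f(u)$, where $L$ is the operator associated to the bundle map $A(x,s,\xi) = (1+|\xi|^2)^{-1/2}\xi$ and $f(t) = t$. I would then record that $A$ is $\mathcal{C}^1$, that it is weakly-$1$-coercive with coercivity constant $k = 1$ (indeed $\langle A(x,s,\xi),\xi\rangle = |\xi|^2/\sqrt{1+|\xi|^2} \geq 0$, $A(x,s,0) = 0$, and $|A(x,s,\xi)| = |\xi|/\sqrt{1+|\xi|^2} < 1$), and that it is strictly monotone by the M\={i}kljukov inequality, exactly as discussed at the opening of this section. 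Moreover $f(s) - f(t) = s - t$, so \eqref{fast} holds with $\alpha = 1$, and the hypotheses imposed on $b$ are precisely condition \eqref{condB} for the given parameters $\mu \in [0,1]$ and $\beta > 0$: trivially so when $\mu = 0$, and by the observation made right after \eqref{condB} when $0 < \mu \leq 1$.

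Next, since $u$ and $v$ are solutions of \eqref{capillOm}, hence in $\mathcal{C}(\overline{\Omega})\cap\mathcal{C}^2(\Omega)$ (which is what makes the boundary identity $u = v$ on $\partial\Omega$ meaningful), they satisfy in particular $Lu \geq b(x)f(u)$ and $Lv \leq b(x)f(v)$ in $\Omega$. Theorem~\ref{1_comp_loc} then applies and yields
\[
	u \leq v + \max\left\{\frac{2k}{\alpha\beta}\,c_\mu(\Omega),\ \sup_{\partial\Omega}(u - v)\right\} \qquad \text{in } \, \Omega.
\]
By hypothesis $c_\mu(\Omega) = 0$, and $u = v$ on $\partial\Omega$ forces $\sup_{\partial\Omega}(u - v) = 0$, so the right-hand maximum equals $0$ and we conclude $u \leq v$ in $\Omega$.

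Finally I would interchange the roles of $u$ and $v$, which is legitimate because they play symmetric roles as solutions of the same equation; the identical argument gives $v \leq u$ in $\Omega$. Combining the two inequalities yields $u \equiv v$ in $\Omega$, completing the proof. I do not anticipate any genuine obstacle here: all of the analytic work is packaged into Theorem~\ref{1_comp_loc}, and what remains is only the bookkeeping of checking that the capillary operator meets its structural hypotheses (weak $1$-coercivity with $k=1$, strict monotonicity, and $f(t)=t$ giving $\alpha=1$) and that the volume-growth term and the boundary term both vanish under the assumptions of the statement.
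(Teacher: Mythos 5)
Your proof is correct and follows exactly the route the paper intends: Theorem \ref{capi_thm} is stated there as an immediate application of Theorem \ref{1_comp_loc} to the capillarity operator with $A(x,s,\xi)=(1+|\xi|^2)^{-1/2}\xi$, $f(t)=t$, $k=1$, $\alpha=1$, using $c_\mu(\Omega)=0$ and $u=v$ on $\partial\Omega$ to kill both terms in the maximum, and then symmetry in $u$ and $v$. Your verification of the structural hypotheses (weak $1$-coercivity, strict monotonicity via the M\={\i}kljukov inequality, and condition \eqref{condB}) matches the discussion already given in that section of the paper.
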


\begin{corollary}
	Let $(M,\metric)$ be a complete manifold of dimension $m$ satisfying
	\[
	\Ricc(\nabla r,\nabla r) \geq - (m-1) \frac{B^2}{(1+r^2)^{\alpha/2}}
	\]
	for some $0 < \alpha \leq 2$ and $B \geq 0$. Let $\mu\in[0,\alpha/2)$ and let $0<b\in C(M)$ satisfy
	$$
	\left\{
		\begin{array}{l@{\quad}l@{\qquad}l}
			b(x) \geq \beta & \forall \, x \in M & \text{if } \, \mu = 0 \\[0.3cm]
			b(x) \geq \beta r(x)^{-\mu} & \text{for } \, r(x) \gg 1 & \text{if } \, 0 < \mu < \frac{\alpha}{2}
		\end{array}
	\right.
	$$
	for some $\beta>0$. Let $\Omega\subsetneqq M$ be a non-empty open set and let $u,v\in\mathcal{C}^2(\Omega)$ be solutions of \eqref{capillOm} such that $u = v$ on $\partial\Omega$. Then $u\equiv v$ in $\Omega$.
\end{corollary}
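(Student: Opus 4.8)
The plan is to reduce the statement to Theorem \ref{capi_thm} by showing that the Ricci curvature hypothesis forces $c_\mu(\Omega)=0$ for every $\mu\in[0,\alpha/2)$, so that the conclusion follows from the comparison principle for the capillarity equation already at our disposal.

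First I would invoke the volume estimates recorded in Remark \ref{remark_cmu}. Since $(M,\metric)$ is complete and satisfies $\Ricc(\nabla r,\nabla r)\geq -(m-1)B^2(1+r^2)^{-\alpha/2}$ with $0<\alpha\leq 2$ and $B\geq 0$, the Bishop--Gromov comparison theorem (in the form of Theorem 3.11 of \cite{bmpr21}, as discussed in that remark) yields $c_\mu(M)=0$ whenever $0\leq\mu<\alpha/2$. Indeed, when $0<\alpha<2$ this is precisely the first bullet of Remark \ref{remark_cmu}, while when $\alpha=2$ one has $c_\mu(M)=0$ for all $0\leq\mu<1=\alpha/2$, which again covers the stated range of $\mu$; the point to check here is only that $\alpha/2\leq 1$, so that the range $[0,\alpha/2)$ always lies inside the range where the remark guarantees $c_\mu(M)=0$.

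Next I would use the monotonicity of the set function $c_\mu$ with respect to inclusion, namely $0\leq c_\mu(\Omega)\leq c_\mu(M)$ for any non-empty open $\Omega\subseteq M$, established right after the definition \eqref{cmu_logv}. Combining this with the previous step gives $c_\mu(\Omega)=0$.

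Finally, with $c_\mu(\Omega)=0$ in hand, and observing that the hypotheses imposed on $b$ (and on $\beta$, $\mu$) are exactly those required by Theorem \ref{capi_thm}, I would apply that theorem to the functions $u,v\in\mathcal C^2(\Omega)$, which solve the capillarity equation \eqref{capillOm} on $\Omega$ and satisfy $u=v$ on $\partial\Omega$, to conclude $u\equiv v$ in $\Omega$. There is essentially no hard step in this argument: the corollary is a repackaging of Theorem \ref{capi_thm} together with the curvature-to-volume-growth implications already collected in Remark \ref{remark_cmu}, and the only mild subtlety, noted above, is verifying that $[0,\alpha/2)$ is contained in the admissible range of $\mu$ in both regimes $\alpha<2$ and $\alpha=2$.
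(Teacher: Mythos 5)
Your proposal is correct and follows essentially the same route as the paper: the paper's proof is precisely to combine Remark \ref{remark_cmu} (giving $c_\mu(M)=0$ for $\mu\in[0,\alpha/2)$ under the stated Ricci lower bound, hence $c_\mu(\Omega)=0$ by monotonicity of $c_\mu$ under inclusion) with Theorem \ref{capi_thm}. Your extra check that $[0,\alpha/2)$ sits inside the admissible range in both regimes $0<\alpha<2$ and $\alpha=2$ is accurate and is the only point needing verification.
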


\begin{proof}
	Use Theorem \ref{capi_thm} and Remark \ref{remark_cmu} to guarantee that in the above assumptions $c_\mu(\Omega) = 0$.
\end{proof}

\section{Extension to the general setting}

The aim of this last section is to show how the results proved in Section 3 can be extended to the general setting of weakly-1-coercive operators
\begin{equation} \label{Ludef}
	L u = w^{-1} \div(w A(x,u,\nabla u))
\end{equation}
with $A$ just a Carath\'eodory-type weakly-1-coercive function, $w$ is a positive, measurable and locally bounded weight, and $u$ is just continuous and locally $W^{1,1}$ regular. To do so, we first prove the following generalization of Theorem \ref{divX_1w}.

\begin{theorem} \label{Xlessreg}
	Let $(M,\metric)$ be a complete, non-compact manifold, $\Omega\subseteq M$ a non-empty open set. Let $w \in L^\infty_\loc(\Omega)$ satisfy $w>0$ a.e.~in $\Omega$ and let $0<b\in C(M)$ satisfy condition \eqref{condB} for some $\mu\in[0,1]$ and $\beta>0$. Let $X\in L^\infty(\Omega;TM)$ and $\psi\in W^{1,1}_\loc(\Omega)\cap C(\overline{\Omega})$ be such that
	\begin{equation} \label{divXweak}
		- \int_\Omega w \, \langle X,\nabla\varphi\rangle \geq \int_\Omega w \, b \, \psi \, \varphi \qquad \forall \, \varphi\in W^{1,1}_c(\Omega) , \, \varphi \geq 0
	\end{equation}
	and
	\begin{equation}
		\langle\nabla\psi,X\rangle \geq 0 \qquad \text{a.e. in } \, \Omega \, .
	\end{equation}
	Then
	\begin{equation} \label{concX}
		\sup_\Omega \psi \leq \max\left\{ \frac{\|X\|_{L^\infty(\Omega)}}{\beta} c^w_\mu(\Omega), \sup_{\partial\Omega} \psi \right\} .
	\end{equation}
	In particular, in case $\Omega=M$ we have
	\[
	\sup_M \psi \leq \frac{\|X\|_{L^\infty(\Omega)}}{\beta} c^w_\mu(M) \, .
	\]
\end{theorem}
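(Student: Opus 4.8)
The plan is to run, in a purely integral form, the geometric argument behind Theorems~\ref{A_1w_base} and \ref{A_1w_base_bisW}: since $X$ is only $L^\infty$, its flow is no longer available, so instead of tracking the $w$-volume of the flowed balls $\Psi_t(B_\delta(p))$ I will estimate directly the $w$-volume of the super-level sets $\{\psi>c^\ast+\eps\}$, approximating their characteristic function and the balls $B_R$ by Lipschitz test functions inserted into \eqref{divXweak}. First I dispose of the trivial cases: if $\sup_{\partial\Omega}\psi=+\infty$, or if $c^w_\mu(\Omega)=+\infty$, the conclusion \eqref{concX} is vacuous; and if $K:=\|X\|_{L^\infty(\Omega)}=0$ then $X=0$ a.e., so \eqref{divXweak} forces $\int_\Omega w\,b\,\psi\,\varphi\le0$ for every $0\le\varphi\in W^{1,1}_c(\Omega)$, whence $\psi\le0$ a.e.\ (as $wb>0$ a.e.) and, by continuity, $\sup_\Omega\psi\le0$, which is \eqref{concX} here because $c^w_\mu(\Omega)\ge0$ ($R\mapsto\int_{B_R\cap\Omega}w$ being non-decreasing and eventually positive). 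So assume $0<K<+\infty$ and $\sup_{\partial\Omega}\psi<+\infty$, put $c^\ast=\max\{\frac{K}{\beta}c^w_\mu(\Omega),\sup_{\partial\Omega}\psi\}\ge0$, and argue by contradiction: if $\sup_\Omega\psi>c^\ast$, choose $\eps>0$ so small that $\Omega_\eps=\{x\in\Omega:\psi(x)>c^\ast+\eps\}\neq\emptyset$. As in the proof of Theorem~\ref{A_1w_base_bis}, $\Omega_\eps$ is open and, since $\psi\le c^\ast<c^\ast+\eps$ on $\partial\Omega$, one has $\overline{\Omega_\eps}\subseteq\Omega$.

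Exactly as around \eqref{inf_b_M}, fix $\beta^\ast$ with $\beta>\beta^\ast>\frac{c^\ast}{c^\ast+\eps}\beta$ — so that $\frac{(c^\ast+\eps)\beta^\ast}{K}>\frac{c^\ast\beta}{K}\ge c^w_\mu(\Omega)$ — and use condition \eqref{condB} to produce $R_0>0$ with $\inf_{B_R}b\ge\beta^\ast R^{-\mu}$ for all $R\ge R_0$; enlarging $R_0$ I also assume $r(p)<R_0$ for a fixed $p\in\Omega_\eps$. The core of the argument is the estimate
\[
	g(R') - g(R) \ \ge\ \frac{(c^\ast+\eps)\beta^\ast}{K}\,(R'-R)\,R^{-\mu}\,g(R) \qquad \text{for all } \, R_0\le R<R',
\]
where $g(R):=\int_{B_R\cap\Omega_\eps}w\,\di v$. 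To prove it, for $R_0\le R<R'$ let $\eta_{R,R'}$ be the Lipschitz cutoff equal to $1$ on $B_R$, to $0$ off $B_{R'}$ and to $(R'-r)/(R'-R)$ in between (so $|\nabla\eta_{R,R'}|\le(R'-R)^{-1}$ a.e.), and for $n\in\N$ let $\lambda_n:\R\to[0,1]$ be the non-decreasing, piecewise-linear function with $\lambda_n\equiv0$ on $(-\infty,c^\ast+\eps]$ and $\lambda_n\equiv1$ on $[c^\ast+\eps+n^{-1},+\infty)$, so that $\lambda_n(\psi)\uparrow\mathbb{1}_{\Omega_\eps}$ pointwise. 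Because $\overline{\Omega_\eps}\subseteq\Omega$, the function $\varphi_n=\lambda_n(\psi)\,\eta_{R,R'}$ is a legitimate non-negative element of $W^{1,1}_c(\Omega)$; inserting it in \eqref{divXweak}, using the chain rule $\nabla\varphi_n=\lambda_n'(\psi)\eta_{R,R'}\nabla\psi+\lambda_n(\psi)\nabla\eta_{R,R'}$ together with $\langle\nabla\psi,X\rangle\ge0$ to discard the term carrying $\lambda_n'$, and $|X|\le K$ together with $\psi>c^\ast+\eps\ge0$ on $\{\lambda_n(\psi)>0\}$, one obtains
\[
	(c^\ast+\eps)\int_\Omega w\,b\,\lambda_n(\psi)\,\eta_{R,R'}\,\di v \ \le\ \frac{K}{R'-R}\int_{B_{R'}\setminus B_R} w\,\lambda_n(\psi)\,\di v\,.
\]
Bounding $b\ge\beta^\ast R^{-\mu}$ on $B_R=\{\eta_{R,R'}=1\}$ and letting $n\to+\infty$ by monotone convergence (all integrands are dominated, since $w\in L^1(B_{R'})$ and $\lambda_n\le1$) yields the claimed inequality.

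Since $g(R_0)\ge\int_{B_{\delta_0}(p)}w\,\di v>0$ for a small ball $B_{\delta_0}(p)\subseteq\Omega_\eps\cap B_{R_0}$, iterating the core inequality over a fine partition of $[R_0,R]$ (elementary, using $\log(1+x)=x+O(x^2)$ and Riemann sums, as in the last part of the proof of Theorem~\ref{A_1w_base}) gives
\[
	\log g(R) \ \ge\ \log g(R_0) + \frac{(c^\ast+\eps)\beta^\ast}{K}\int_{R_0}^R s^{-\mu}\,\di s \qquad \text{for all } \, R\ge R_0\,.
\]
As $g(R)\le\int_{B_R\cap\Omega}w\,\di v$, dividing by $R^{1-\mu}$ (if $\mu<1$) or by $\log R$ (if $\mu=1$), evaluating the elementary integral and letting $R\to+\infty$ produces $c^w_\mu(\Omega)\ge\frac{(c^\ast+\eps)\beta^\ast}{K}>c^w_\mu(\Omega)$, the desired contradiction; the case $\Omega=M$ follows at once because then $\partial\Omega=\emptyset$. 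The step I expect to require the most care is the passage to the core inequality: one must check that $\varphi_n$ is genuinely admissible (which is precisely what $\overline{\Omega_\eps}\subseteq\Omega$ guarantees), that the Stampacchia chain rule and the discarding of the $\lambda_n'$-term are valid a.e.\ — the ``kink'' sets $\{\psi=c^\ast+\eps\}$ and $\{\psi=c^\ast+\eps+n^{-1}\}$ being $\nabla\psi$-null — and that the limit $n\to+\infty$ is uniform in $(R,R')$; everything afterwards is the same Gr\"onwall-type bookkeeping as in Theorem~\ref{A_1w_base}.
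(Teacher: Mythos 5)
Your proof is correct and follows essentially the same route as the paper's: you test \eqref{divXweak} with a product of a radial cutoff and a cutoff of $\psi$ above the critical level, discard the term carrying $\langle X,\nabla\psi\rangle\geq 0$, bound $|X|$ and $b$, and deduce exponential growth of $R\mapsto\int_{B_R\cap\{\psi>c^\ast+\eps\}}w$, contradicting the definition of $c^w_\mu(\Omega)$. The only (harmless) difference is technical: the paper converts the same estimate into an a.e.\ differential inequality for $G(t)=\int_{B_t\cap\Omega_\gamma}w$, using its local absolute continuity, and integrates, whereas you keep a two-point difference inequality and iterate over partitions, thereby avoiding that regularity discussion.
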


\begin{remark}
	\emph{In case $\|X\|_{L^\infty(\Omega)} = 0$ and $c^w_\mu(\Omega) = +\infty$, we adopt the convention $0\cdot(+\infty) = 0$ to make sense of the first term in brackets on the RHS of \eqref{concX}. Note that if $\|X\|_{L^\infty(\Omega)} = 0$ then the validity of \eqref{divXweak} forces $w b \psi\leq 0$ a.e.~in $\Omega$, and thus $\psi\leq 0$ a.e.~in $\Omega$ since $w$ and $b$ are a.e.~strictly positive. Hence, in case $\|X\|_{L^\infty(\Omega)} = 0$ we necessarily have $\sup_\Omega \psi \leq 0$ and then, even in case $c^w_\mu(\Omega) = +\infty$, inequality \eqref{concX} certainly holds because its RHS is $0$.}
\end{remark}

\begin{proof}[Proof of Theorem \ref{Xlessreg}]
	Suppose, by contradiction, that \eqref{concX} is not satisfied. Then, we can find $\gamma\in\RR$ such that
	\[
	\sup_\Omega \psi > \gamma > \max\left\{ \frac{\|X\|_{L^\infty(\Omega)}}{\beta} c^w_\mu(\Omega), \sup_{\partial\Omega} \psi \right\} .
	\]
	Since the RHS of \eqref{concX} is non-negative, necessarily $\gamma>0$. We define
	\[
		\Omega_\gamma = \{ x \in \Omega : \psi(x) > \gamma \}
	\]	
	and $G : (0,+\infty) \to [0,+\infty)$ by setting
	\[
		G(t) = \vol_{-\log w}(B_t\cap\Omega_\gamma) = \int_{B_t\cap\Omega_\gamma} w \qquad \forall \, t > 0 \, .
	\]
	The set $\Omega_\gamma$ is open, non-empty and satisfies $\overline{\Omega_\gamma} \subseteq \Omega$ because $\gamma > \sup_{\partial\Omega}\psi$. The function $G$ is well defined, non-decreasing and absolutely continuous on any compact interval contained in $(0,+\infty)$. In particular, it is differentiable a.e.~in $(0,+\infty)$. Moreover, by monotone convergence together with the fact that $w>0$ a.e.~and $\vol(\Omega_\gamma)>0$, we have
	\begin{equation} \label{Glim}
		\lim_{R\to+\infty} G(R) = \int_{\Omega_\gamma} w > 0 \, .
	\end{equation}
	As in the proof of Theorem \ref{A_1w_base}, fix $\beta^\ast\in\RR$ such that
	\[
		\beta > \beta^\ast > \frac{\|X\|_{L^\infty(\Omega)}}{\gamma} c^w_\mu(\Omega)
	\]
	and let $R_0>0$ be large enough so that
	\[
	\inf_{B_R} b(x) \geq \beta^\ast R^{-\mu} \qquad \forall \, R \geq R_0 \, .
	\]
	Similarly to what we did in the proof of Theorem \ref{A_1w_base}, we aim at showing that
	\begin{equation} \label{G'G}
		\frac{\|X\|_{L^\infty(\Omega)}}{\beta^\ast} G'(t) \geq \frac{\gamma}{t^\mu} G(t) \qquad \text{for a.e.~} \, t\in(R_0,+\infty) \, .
	\end{equation}
	Provided \eqref{G'G} holds, if we choose $R_1 = R_1(\gamma) >R_0$ such that $G(R_1)>0$ (the existence of such $R_1$ is guaranteed by \eqref{Glim}) then integrating the differential inequality \eqref{G'G} on $[R_1,R]$ for any $R>R_1$ (and using the fact that $\log G$ is absolutely continuous on $[R_1,R]$ because $G$ is absolutely continuous, positive and bounded away from $0$ on that interval) we get
	\[
	\frac{\|X\|_{L^\infty(\Omega)}}{\beta^\ast} [\log G(R) - \log G(R_1)] \geq \gamma\begin{cases}
		\left(\dfrac{R^{1-\mu}}{1-\mu} - \dfrac{R_1^{1-\mu}}{1-\mu}\right) & \text{if } \, 0 \leq \mu < 1 \\[0.2cm]
		(\log R - \log R_1) & \text{if } \, \mu = 1
	\end{cases}
	\]
	for all $R>R_1$, from which we obtain, letting $R\to+\infty$,
	\[
	\frac{\|X\|_{L^\infty(\Omega)}}{\beta^\ast} c^w_\mu(\Omega) \geq \frac{\|X\|_{L^\infty(\Omega)}}{\beta^\ast} c^w_\mu(\Omega_\gamma) \geq \gamma
	\]
	thus reaching the desired contradiction. Hence, we are only left to prove validity of \eqref{G'G} under the aformentioned assumptions on $\gamma$ and $R_0$. Let $t>R_0$ be a value for which $G'(t)$ exists. For any $0<\delta<t$ choose $\eta_\delta\in C^\infty(M)$ satisfying
	\[
	\begin{cases}
		\eta_\delta \equiv 1 & \quad \text{on } \, B_{t-\delta} \\
		\eta_\delta \equiv 0 & \quad \text{on } \, M\setminus B_t \\
		0 \leq \eta_\delta \leq 1 & \quad \text{on } \, B_t \setminus B_{t-\delta} \\
		|\nabla\eta_\delta| \leq \dfrac{1}{\delta} + 1 & \quad \text{on } \, M \, .
	\end{cases}
	\]
	Let $\lambda\in C^\infty(\RR)$ be such that
	\[
	\lambda(s) = 0 \quad \text{if } \, s \leq 1 \, , \qquad \lambda(s) = 1 \quad \text{if } \, s \geq 2 \, , \qquad \lambda' \geq 0 \quad \text{on } \, \RR
	\]
	and for any $\eps>0$ define $\lambda_\eps\in C^\infty(\RR)$ by
	\[
	\lambda_\eps(s) = \lambda(s/\eps) \, .
	\]
	We have
	\[
	0 \leq \lambda_\eps \leq \mathbf{1}_{(0,+\infty)} \quad \forall \, \eps > 0 \qquad \text{and} \qquad \lambda_\eps \nearrow \mathbf{1}_{(0,+\infty)} \quad \text{as } \, \eps \to 0^+ \, ,
	\]
	where $\mathbf{1}$ stands for the indicator function and $\nearrow$ denotes monotone convergence from below. Now, for any $0<\delta<t$ and $\eps>0$ we consider the test function $\varphi_{\delta,\eps} \in W^{1,1}_c(\Omega)$ defined by
	\[
	\varphi_{\delta,\eps} = \eta_\delta \cdot \lambda_\eps(\psi-\gamma) \, .
	\]
	By construction, $\varphi_{\delta,\eps}$ is non-negative and compactly supported inside $\Omega_\gamma \subseteq \Omega$, so it is an admissible test function for the differential inequality \eqref{divXweak}, yielding
	\[
	- \int_{\Omega_\gamma} w \langle X,\nabla\varphi_{\delta,\eps}\rangle \geq \int_{\Omega_\gamma} w \, b \, \psi \, \varphi_{\delta,\eps} \, .
	\]
	We have
	\begin{align*}
		- w \langle X,\nabla\varphi_{\delta,\eps}\rangle & = - w \lambda_\eps(\psi-\gamma) \langle X,\nabla\eta_\delta\rangle - w \eta_\delta \lambda_\eps'(\psi-\gamma) \langle X,\nabla\psi\rangle \\
		& \leq - w \lambda_\eps(\psi-\gamma) \langle X,\nabla\eta_\delta\rangle
	\end{align*}
	where the inequality holds since $w \, \eta_\delta\geq0$ and $\lambda_\eps'\geq0$ by construction, and $\langle X,\nabla\psi\rangle \geq 0$ by assumption. Thus, we have
	\[
	- \int_{\Omega_\gamma} w \, \lambda_\eps(\psi-\gamma) \langle X,\nabla\eta_\delta\rangle \geq \int_{\Omega_\gamma} w \, b \, \psi \, \lambda_\eps(\psi-\gamma) \, \eta_\delta \, .
	\]
	Letting $\eps\to0^+$ and applying the dominated convergence theorem and the monotone convergence theorem to the left and right sides of this inequality, respectively, we obtain
	\[
	- \int_{\Omega_\gamma} w \langle X,\nabla\eta_\delta\rangle \geq \int_{\Omega_\gamma} w \, b \, \psi \, \eta_\delta = \int_{\Omega_\gamma \cap B_t} w \, b \, \psi \, \eta_\delta \geq \frac{\beta^\ast\gamma}{t^\mu} \int_{\Omega_\gamma} w \eta_\delta
	\]
	where in the middle equality we used the fact that $\eta_\delta = 0$ outside $B_t$, and in the last inequality we used that $\psi \geq \gamma$ on $\Omega_\gamma$ and that $b \geq \beta^\ast t^{-\mu}$ on $B_t$. By Cauchy-Schwarz inequality, we further estimate (using that $\nabla\eta_\delta$ is supported in $B_t\setminus B_{t-\delta}$ and that $\eta_\delta = 1$ on $B_{t-\delta}$)
	\[
	\|X\|_{L^\infty(\Omega)} \|\nabla\eta_\delta\|_{L^\infty(M)} \int_{\Omega\gamma \cap B_t \setminus B_{t-\delta}} w \geq \frac{\beta^\ast\gamma}{t^\mu} \int_{\Omega\gamma \cap B_{t-\delta}} w \, ,
	\]
	that is, using the upper bound $|\nabla\eta_\delta| \leq 1+1/\delta$ and the definition of $G$,
	\[
	\frac{\|X\|_{L^\infty(\Omega)}}{\beta^\ast} (1+\delta) \frac{G(t)-G(t-\delta)}{\delta} \geq \frac{\gamma}{t^\mu} G(t-\delta) \, .
	\]
	Letting $\delta\to0^+$ and using that $t$ is a differentiability point for $G$ we obtain
	\[
	\frac{\|X\|_{L^\infty(\Omega)}}{\beta^\ast} G'(t) \geq \frac{\gamma}{t^\mu} G(t) \, .
	\]
	So, we proved that this inequality is satisfied for every $t\in(R_0,+\infty)$ such that $G'(t)$ exists, so in particular \eqref{G'G} is proved and this concludes the argument.
\end{proof}

From Theorem \ref{Xlessreg} we deduce the following extensions of Theorem \ref{A_1w_base_bisW} to the general aforementioned setting.

\begin{theorem} \label{A_1w_lessreg}
	Let $(M,\metric)$ be a complete Riemannian manifold, $\Omega\subseteq M$ an open set. Let $L$ be a weakly-1-coercive differential operator weakly defined as in \eqref{Ludef} for a weakly-1-coercive Carath\'eodory-type function $A : \RR\times TM|_\Omega \to TM|_\Omega$ with coercivity constant $k>0$ and a weight $w\in L^\infty_\loc(\Omega)$ satisfying $w>0$ a.e.~in $\Omega$.
	
	Let $0<b\in\mathcal C(M)$ satisfy condition \eqref{condB} for some $\mu\in[0,1]$ and $\beta>0$ and let $f\in\mathcal C^1(\RR)$ be non-decreasing. If $u\in W^{1,1}_\loc(\Omega) \cap \mathcal C(\overline{\Omega})$ satisfies
	\[
		L u \geq b(x) f(u) \quad \text{in } \, \Omega
	\]
	then
	\[
		\sup_\Omega f(u) \leq \max\left\{\frac{k}{\beta}c^w_\mu(\Omega), \sup_{\partial\Omega}f(u) \right\} .
	\]
	In particular, in case $\Omega=M$ we have $f(u) \leq k c^w_\mu(M)/\beta$ on $M$.
\end{theorem}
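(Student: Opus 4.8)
The plan is to obtain Theorem \ref{A_1w_lessreg} as a direct consequence of Theorem \ref{Xlessreg}, applied with the choices
\[
X = A(x,u(x),\nabla u(x)) \qquad \text{and} \qquad \psi = f(u) .
\]
First I would verify that these satisfy all the hypotheses of Theorem \ref{Xlessreg}. Since $A$ is weakly-$1$-coercive, condition \eqref{wc3} gives $|X|\leq k$ a.e.\ in $\Omega$; together with the measurability of $x\mapsto A(x,u(x),\nabla u(x))$, which follows from the Carath\'eodory conditions on $A$ and the measurability of $u$ and $\nabla u$, this shows $X\in L^\infty(\Omega;TM)$ with $\|X\|_{L^\infty(\Omega)}\leq k$. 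Next, $\psi=f(u)$ is continuous on $\overline\Omega$ because $f$ is continuous and $u\in\mathcal C(\overline\Omega)$, and $\psi\in W^{1,1}_\loc(\Omega)$ by the chain rule for Sobolev functions: on any relatively compact open $\Omega_0\subseteq\Omega$ the function $u$ is bounded, being continuous on the compact set $\overline{\Omega_0}\subseteq\Omega$, so $f$ agrees on a neighbourhood of $u(\overline{\Omega_0})$ with a globally Lipschitz function, whence $f(u)\in W^{1,1}(\Omega_0)$ and $\nabla\psi=f'(u)\nabla u$ a.e.

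With this identity at hand, the remaining two hypotheses of Theorem \ref{Xlessreg} are immediate. The sign condition holds because
\[
\langle\nabla\psi,X\rangle = f'(u)\,\langle\nabla u,A(x,u,\nabla u)\rangle \geq 0 \qquad \text{a.e.\ in } \Omega ,
\]
using that $f'\geq0$ (as $f$ is non-decreasing) and \eqref{wc1}. As for the differential inequality, by the very definition of $Lu\geq b(x)f(u)$ in the weak sense for the weighted operator \eqref{Ludef} we have, for every $\varphi\in W^{1,1}_c(\Omega)$ with $\varphi\geq0$,
\[
-\int_\Omega w\,\langle X,\nabla\varphi\rangle \geq \int_\Omega w\,b\,f(u)\,\varphi = \int_\Omega w\,b\,\psi\,\varphi ,
\]
which is exactly \eqref{divXweak}; note that $b\,f(u)$ is continuous, hence locally bounded, so this weak inequality is meaningful.

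Theorem \ref{Xlessreg} then yields
\[
\sup_\Omega f(u) = \sup_\Omega\psi \leq \max\left\{\frac{\|X\|_{L^\infty(\Omega)}}{\beta}\,c^w_\mu(\Omega),\ \sup_{\partial\Omega}\psi\right\} .
\]
Since $\|X\|_{L^\infty(\Omega)}\leq k$ and $c^w_\mu(\Omega)\geq0$ — the latter because $R\mapsto\log\int_{B_R\cap\Omega}w$ is non-decreasing, so the defining $\liminf$ in \eqref{cmu_MW} is non-negative — the first term in the brackets is at most $\frac{k}{\beta}c^w_\mu(\Omega)$, and replacing $\psi$ by $f(u)$ in the boundary term gives the asserted bound. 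When $\Omega=M$ the boundary term is absent and Theorem \ref{Xlessreg} gives directly $\sup_M f(u)\leq\frac{\|X\|_{L^\infty}}{\beta}c^w_\mu(M)\leq\frac{k}{\beta}c^w_\mu(M)$.

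I expect the only non-formal point to be the chain-rule step, namely that $\psi=f(u)\in W^{1,1}_\loc(\Omega)$ with $\nabla\psi=f'(u)\nabla u$; everything else is a bookkeeping translation of the hypotheses into the language of Theorem \ref{Xlessreg}. The continuity of $u$ is what makes this step work, since it confines $u$ to a bounded set on each relatively compact piece of $\Omega$ and thereby turns the merely $\mathcal C^1$ nonlinearity $f$ into a Lipschitz one on the relevant range; this is precisely why the hypothesis $f\in\mathcal C^1(\RR)$, rather than just $f\in\mathcal C(\RR)$, is imposed here.
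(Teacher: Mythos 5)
Your proposal is correct and coincides with the paper's own argument, which likewise obtains the result by applying Theorem \ref{Xlessreg} with $X = A(x,u,\nabla u)$ and $\psi = f(u)$; your verification of the hypotheses (in particular the chain-rule step $f(u)\in W^{1,1}_\loc(\Omega)$ via $f\in\mathcal C^1$ and the continuity of $u$, and the sign condition $\langle\nabla\psi,X\rangle = f'(u)\langle\nabla u,A(x,u,\nabla u)\rangle\geq 0$) just spells out what the paper leaves implicit, matching the discussion in its subsequent remark.
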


\begin{proof}
	The thesis follows by direct application of Theorem \ref{Xlessreg} with the choice $X = A(x,u,\nabla u)$ and $\psi = f(u)$.
\end{proof}

\begin{remark}
	\emph{The regularity condition $f\in\mathcal{C}^1(\RR)$ ensures that $f(u) \in W^{1,1}_\loc(\Omega)$ as long as $u\in W^{1,1}_\loc(M)\cap\mathcal{C}(\overline{\Omega})$, so that we can directly apply Theorem \ref{Xlessreg} with the choice $\psi = f(u)$. In fact, an analogue of the conclusion of Theorem \ref{A_1w_lessreg} still holds also in case $f$ is only assumed to be measurable and non-decreasing. Namely, in this case one can show that
	\begin{equation} \label{fu_comp}
		\lim_{s\nearrow u^\ast} f(s) \leq \max\left\{\frac{k}{\beta}c^w_\mu(\Omega), \sup_{\partial\Omega}f(u) \right\}
	\end{equation}
	where $u^\ast = \sup_\Omega u$ and by $\lim_{s\nearrow u^\ast} f(s)$ we intend the limit of $f(s)$ as $s$ approaches $u^\ast$ from the left. This limit exists by monotonicity of $f$, and it is always less or equal than $\sup_\Omega f(u)$. The two values concide if either $u < u^\ast$ everywhere in $\Omega$, or if $u^\ast$ is actually attained at some point $x_0\in\Omega$ and $f$ is continuous at $u^\ast$. Inequality \eqref{fu_comp} can be proved by following the same reasoning used in the proof of Theorem \ref{Xlessreg}, with $\lambda_\eps(\psi-\gamma)$ replaced by $\lambda_\eps(u-u_0)$ for a suitable choice of $u_0\in\R$ (namely, if by contradiction there exists $\gamma\in\R$ such that
	\[
		\lim_{s\nearrow u^\ast} f(s) > \gamma > \max\left\{\frac{k}{\beta}c^w_\mu(\Omega), \sup_{\partial\Omega}f(u) \right\}
	\]
	then one can choose any $u_0 < u^\ast$ such that $f(u_0) > \gamma$, so that on the open, non-empty set $\Omega_0 = \{x \in \Omega : u(x) > u_0\}$ it holds $f(u) \geq f(u_0) > \gamma$; by continuity of $u$, the inequality $u\geq u_0$ and therefore $f(u) > \gamma$ holds on $\overline{\Omega_0}$ as well, implying that $\overline{\Omega_0}\subseteq\Omega$.)}
\end{remark}

Similarly, one can prove the following extension of Theorem \ref{1_comp_loc}.

\begin{theorem}
	Let $(M,\metric)$ be a complete Riemannian manifold, $\Omega\subseteq M$ an open set. Let $L$ be the differential operator defined as in \eqref{Ludef} for a strictly monotone weakly-1-coercive Carath\'eodory-type function $A : \RR\times TM|_\Omega \to TM|_\Omega$ with coercivity constant $k>0$ and a weight $w\in L^\infty_\loc(\Omega)$ satisfying $w>0$ a.e.~in $\Omega$.
	
	Let $b\in\mathcal C(M)$ be positive and satisfy \eqref{condB} for some $\mu\in[0,1]$ and $\beta>0$, and let $f\in\mathcal{C}^1(\R)$ be such that
	\[
		f(s) - f(t) \geq \alpha (s-t) \qquad \forall \, s,t\in\R \, \text{ with } \, s > t
	\]
	for some constant $\alpha>0$. If $u,v\in W^{1,1}_\loc(\Omega) \cap \mathcal C(\overline{\Omega})$ satisfy
	\[
		\begin{cases}
			Lu \geq b(x) f(u) & \quad \text{in } \, \Omega \\
			Lv \leq b(x) f(v) & \quad \text{in } \, \Omega
		\end{cases}
	\]
	then
	\begin{equation}
		u \leq v + \max\left\{\frac{2k}{\alpha\beta} c_\mu(\Omega),\sup_{\partial\Omega}(u-v)\right\} \qquad \text{in } \, \Omega.
	\end{equation}
\end{theorem}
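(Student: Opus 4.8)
The plan is to reduce the statement to Theorem~\ref{Xlessreg} by the same device used in the Remark above to deduce Theorems~\ref{1_comp} and~\ref{1_comp_loc} from Theorem~\ref{divX_1w}, the only difference being that one now appeals to the low-regularity first-order maximum principle rather than to its $\mathcal C^1$ version. First I would set $\psi = u - v$, which lies in $W^{1,1}_\loc(\Omega)\cap\mathcal C(\overline\Omega)$ because $u$ and $v$ do, and introduce the measurable vector field $X = A(x,u,\nabla u) - A(x,v,\nabla v)$ on $\Omega$. The coercivity bound \eqref{wc3} gives $|X| \leq |A(x,u,\nabla u)| + |A(x,v,\nabla v)| \leq 2k$ a.e.\ in $\Omega$, so $X\in L^\infty(\Omega;TM)$ with $\|X\|_{L^\infty(\Omega)}\leq 2k$. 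The role of the superlevel set will be played by the open set
\[
	\Omega_+ = \{x\in\Omega : u(x) > v(x)\} = \{x\in\Omega : \psi(x) > 0\} ;
\]
if $\Omega_+ = \emptyset$ then $u\leq v$ throughout $\Omega$ and, since $c^w_\mu(\Omega)\geq 0$, the conclusion holds trivially, as it does when $\sup_{\partial\Omega}(u-v) = +\infty$; so I would assume henceforth $\Omega_+\neq\emptyset$ and $\sup_{\partial\Omega}(u-v) < +\infty$.

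Next I would verify on $\Omega_+$ the two structural hypotheses of Theorem~\ref{Xlessreg}. The sign condition follows from strict monotonicity \eqref{str_mon_1} applied with $t = u(x) > v(x) = s$, $\xi = \nabla u(x)$, $\eta = \nabla v(x)$:
\[
	\langle X,\nabla\psi\rangle = \bigl\langle A(x,u,\nabla u) - A(x,v,\nabla v),\ \nabla u - \nabla v\bigr\rangle \geq 0 \qquad \text{a.e.\ in } \, \Omega_+ .
\]
For the differential inequality, I would subtract the weak formulations of $Lu\geq b(x)f(u)$ and $Lv\leq b(x)f(v)$ tested against an arbitrary $\varphi\in W^{1,1}_c(\Omega_+)$ with $\varphi\geq 0$ — which extends by zero to an admissible test function on $\Omega$, its support being a compact subset of $\Omega_+\subseteq\Omega$ — obtaining $-\int_{\Omega_+} w\langle X,\nabla\varphi\rangle \geq \int_{\Omega_+} w b\,[f(u)-f(v)]\,\varphi$. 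Since $u > v$ on $\Omega_+$, condition \eqref{fast} gives $f(u) - f(v) \geq \alpha(u-v) = \alpha\psi$, and as $w,b,\varphi\geq 0$ this yields
\[
	-\int_{\Omega_+} w\,\langle X,\nabla\varphi\rangle \geq \int_{\Omega_+} w\,b\,(\alpha\psi)\,\varphi \qquad \forall\,\varphi\in W^{1,1}_c(\Omega_+),\ \varphi\geq 0 .
\]

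Finally I would apply Theorem~\ref{Xlessreg} on the open set $\Omega_+$, with weight $w$, vector field $X$, and with $\alpha\psi$ — which belongs to $W^{1,1}_\loc(\Omega_+)\cap\mathcal C(\overline{\Omega_+})$ since $u,v\in\mathcal C(\overline\Omega)$ and $\overline{\Omega_+}\subseteq\overline\Omega$, and satisfies $\langle\nabla(\alpha\psi),X\rangle = \alpha\langle\nabla\psi,X\rangle\geq 0$ a.e. — in the role of ``$\psi$'' there. This would give
\[
	\alpha\sup_{\Omega_+}\psi \leq \max\left\{ \frac{\|X\|_{L^\infty(\Omega_+)}}{\beta}\,c^w_\mu(\Omega_+),\ \alpha\sup_{\partial\Omega_+}\psi \right\} .
\]
To conclude I would use $\|X\|_{L^\infty(\Omega_+)}\leq 2k$; the monotonicity $c^w_\mu(\Omega_+)\leq c^w_\mu(\Omega)$ under inclusion, immediate from \eqref{cmu_MW} and $w\geq 0$; and $\partial\Omega_+\subseteq\partial\Omega\cup\{u=v\}$, on which $\psi$ equals $0$ (on $\{u=v\}$) or is $\leq\sup_{\partial\Omega}(u-v)$ (on $\partial\Omega$), so that $\sup_{\partial\Omega_+}\psi\leq\max\{0,\sup_{\partial\Omega}(u-v)\}$. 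Dividing by $\alpha>0$ and absorbing the $0$ into the nonnegative term $\frac{2k}{\alpha\beta}c^w_\mu(\Omega)$ gives $\sup_{\Omega_+}(u-v)\leq\max\{\frac{2k}{\alpha\beta}c^w_\mu(\Omega),\sup_{\partial\Omega}(u-v)\}$, and since $u-v\leq 0$ on $\Omega\setminus\Omega_+$ the same bound holds on all of $\Omega$, which is the assertion. The one genuinely delicate point — inherited verbatim from the Remark above — is that strict monotonicity only forces $\langle X,\nabla\psi\rangle\geq 0$ on $\{u\geq v\}$, so the argument must be run on the open superlevel set $\Omega_+$, whose closure need not lie inside $\Omega$; this is precisely why the boundary term $\sup_{\partial\Omega_+}\psi$ has to be routed through the inclusion $\partial\Omega_+\subseteq\partial\Omega\cup\{u=v\}$. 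Everything else is a routine transcription of the proofs of Theorems~\ref{1_comp_loc} and~\ref{A_1w_lessreg}.
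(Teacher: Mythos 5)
Your reduction to Theorem~\ref{Xlessreg} on the superlevel set $\Omega_+=\{u>v\}$, with $X=A(x,u,\nabla u)-A(x,v,\nabla v)$ (so $\|X\|_{L^\infty}\leq 2k$), $\psi=u-v$, and the boundary bookkeeping $\partial\Omega_+\subseteq\partial\Omega\cup\{u=v\}$, is correct and is precisely the route the paper intends: it gives no separate proof here, only the indication that the result follows ``similarly'' to Theorem~\ref{1_comp_loc}, whose deduction from Theorem~\ref{divX_1w} in the preceding Remark you transcribe faithfully to the low-regularity setting. The only discrepancy is that your argument (correctly) produces the weighted constant $c^w_\mu(\Omega)$ rather than the $c_\mu(\Omega)$ appearing in the statement, which, by comparison with Theorem~\ref{A_1w_lessreg}, is evidently a typo in the paper for the weighted operator \eqref{Ludef}; the two coincide when $w\equiv 1$.
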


\bigskip

\noindent \textbf{Acknowledgements} \; The authors would like to thank the anonymous referee for reading the manuscript in great detail and for giving several valuable suggestions and useful corrections to improve the paper.

\bigskip

\noindent \textbf{Competing interests} \; Luis J. Al\'{\i}as and Marco Rigoli are partially supported by the grant PID2021-124157NB-I00 funded by MCIN/AEI/10.13039/501100011033/ ‘ERDF A way of making Europe’, Spain, and they are also supported by Comunidad Aut\'{o}noma de la Regi\'{o}n de Murcia, Spain, within the framework of the Regional Programme in Promotion of the Scientific and Technical Research (Action Plan 2022), by Fundaci\'{o}n S\'{e}neca, Regional Agency of Science and Technology, REF, 21899/PI/22. Marco Rigoli is also partially supported by the MUR grant for the PRIN project 20225J97H5.

\end{document}